\newcommand\CO{\mathbb{C}}
\newcommand\RE{\mathbb{R}}
\newcommand\ZA{\mathbb{Z}}
\newcommand\TT{\mathbb{T}}
\newcommand\Tr{\operatorname{Tr}}
\newcommand\Det{\operatorname{Det}}
\newcommand\Hom{\operatorname{Hom}}
\newcommand\rk{\operatorname{rk}}
\newcommand\res{\operatorname{res}}
\newcommand\spec{\operatorname{spec}}
\newcommand\GL{\operatorname{GL}}
\newcommand\vol{\operatorname{vol}}
\newcommand\im{\operatorname{im}}
\newcommand\cpct{_{\mathrm c}}
\newcommand\E{\mathcal E}
\newcommand\odd{^{\mathrm{odd}}}
\newcommand\even{^{\mathrm{even}}}
\newcommand\I{_{\mathrm{I}}}
\newcommand\II{_{\mathrm{II}}}
\def\GL{\operatorname{GL}}
\newcommand{\nno}{\nonumber \\}
\theoremstyle{plain}
\newtheorem{theorem}{Theorem}[section]
\newtheorem{lemma}[theorem]{Lemma}
\newtheorem{proposition}[theorem]{Proposition}
\newtheorem{corollary}[theorem]{Corollary}
\theoremstyle{definition}
\newtheorem{definition*}{Definition}
\theoremstyle{remark}
\newtheorem{remarks*}{Remarks}
\begin{document}

  \begin{flushright}
{\tt arXiv:0810.4204v6[math.DG]}\\
revised: March, 2011
  \end{flushright}

\vspace{1cm}

\title[Analytic torsion for twisted de Rham complexes]
{Analytic torsion for twisted de Rham complexes}

\author{Varghese Mathai}
\address{Department of Mathematics, University of Adelaide,
Adelaide 5005, Australia}
\email{mathai.varghese@adelaide.edu.au}

\author{Siye Wu}
\address{Department of Mathematics, University of Colorado,
Boulder, Colorado 80309-0395, USA and
Department of Mathematics, University of Hong Kong, Pokfulam Road, 
Hong Kong, China} 
\email{swu@maths.hku.hk}

\begin{abstract}
We define analytic torsion $\tau(X,\E,H)\in\det H^\bullet(X,\E,H)$ for the
twisted de Rham complex, consisting of the spaces of differential forms on
a compact oriented Riemannian manifold $X$ valued in a flat vector bundle
$\E$, with a differential given by $\nabla^\E+H\wedge\,\cdot\,$, where 
$\nabla^\E$ is a flat connection on $\E$, $H$ is an odd-degree closed
differential form on $X$, and $H^\bullet(X,\E,H)$ denotes the cohomology
of this $\ZA_2$-graded complex.
The definition uses pseudodifferential operators and residue traces.
We show that when $\dim X$ is odd, $\tau(X,\E,H)$ is independent of the
choice of metrics on $X$ and $\E$ and of the representative $H$ in the
cohomology class $[H]$.
We define twisted analytic torsion in the context of generalized geometry
and show that when $H$ is a $3$-form, the deformation $H\mapsto H-dB$, where
$B$ is a $2$-form on $X$, is equivalent to deforming a usual metric $g$ to
a generalized metric $(g,B)$.
We demonstrate some basic functorial properties.
When $H$ is a top-degree form, we compute the torsion, define its simplicial
counterpart and prove an analogue of the Cheeger-M\"uller Theorem.
We also study the twisted analytic torsion for $T$-dual circle bundles 
with integral $3$-form fluxes.\\
\end{abstract}

\keywords{Analytic torsion, twisted de Rham cohomology, generalized geometry,
Cheeger-M\"uller Theorem, $T$-duality}

\subjclass[2000]{Primary 58J52; Secondary 57Q10, 58J40, 81T30.}

\maketitle

\section*{Introduction}

Let $X$ be a compact oriented smooth manifold (without boundary) and 
$\rho\colon\pi_1(X)\to\GL(E)$, an orthogonal or unitary representation
of the fundamental group $\pi_1(X)$ on a vector space $E$.
The Reidemeister-Franz torsion, or $R$-torsion, of $\rho$ is defined in
terms of a triangulation of $X$.
In \cite{RS,RS2}, Ray and Singer introduced its analytic counterpart, 
an alternating product of the regularized determinants of Laplacians, 
and conjectured that the latter is equal to the $R$-torsion.
(For lens spaces, the equality of the two torsions was established in
\cite{Ray}.)
The Ray-Singer conjecture was proved independently by Cheeger \cite{C79}
and M\"uller \cite{M78} for orthogonal or unitary representations of the
fundamental group and was extended to unimodular representations by
M\"uller \cite{M93}.
Another proof of the Cheeger-M\"uller theorem, as well as an extension of 
it to arbitrary flat bundles, is due to Bismut and Zhang \cite{BZ}, who 
used the Witten deformation technique.

In this paper we generalize the classical construction of the Ray-Singer
torsion to the twisted de Rham complex with an odd-degree differential 
form as flux and with coefficients in a flat vector bundle.
The twisted de Rham complex was first defined for $3$-form fluxes by Rohm and
Witten in the appendix of \cite{RW} and has played an important role in string
theory \cite{BCMMS,AS}, for the Ramond-Ramond fields (and their charges) in
type II string theories lie in the twisted cohomology of spacetime.
$T$-duality of the type II string theories on compactified spacetime gives
rise to a duality isomorphism of twisted cohomology groups \cite{BEM}.

Let $H$ be a closed differential form of odd degree on $X$ and 
$\rho\colon\pi_1(X)\to\GL(E)$, a representation of $\pi_1(X)$ 
on a finite dimensional vector space $E$.
Denote by $\E$ the corresponding flat bundle over $X$ with the 
canonical flat connection $\nabla^\E$.
The twisted de Rham complex is the $\ZA_2$-graded complex
$(\Omega^\bullet(X,\E),\nabla^\E+H\wedge\,\cdot\,)$.
Its cohomology, denoted by $H^\bullet(X,\E,H)$, is called the twisted
de Rham cohomology. 
We show that the twisted cohomology groups are invariant under scalings
of $H$ provided its degree is at least $3$ and under smooth homotopy
equivalences that match the cohomology classes of the flux forms.
We establish Poincar\'e duality and K\"unneth isomorphism for twisted
cohomology groups.
We define analytic torsion of the twisted de Rham complex 
$\tau(X,\E,H)\in\det H^\bullet(X,\E,H)$ as a ratio of zeta-function
regularized determinants of partial Laplacians, multiplied by the
ratio of volume elements of the cohomology groups.
While the de Rham complex has a $\ZA$-grading, the twisted de Rham 
complex is only $\ZA_2$-graded.
As a result, analytic techniques used to establish the basic properties
in the classical case have to be generalized accordingly.
These regularized determinants turn out to be more complicated to define,
as they require properties of pseudodifferential projections.
The definition is based on the fact that the non-commutative residues or
the Guillemin-Wodzicki residue traces \cite{Wo,Gui} of such projections
vanish \cite{Wo,BrLe,Gr03}.
We show that when $\dim X$ is odd, $\tau(X,\E,H)$ is independent of the
choice of the Riemannian metric on $X$ and the Hermitian metric on $\E$.
The torsion $\tau(X,\E,H)$ is also invariant (under a natural identification)
if $H$ is deformed within its cohomology class.
The comparison of the deformations of the metrics and of the flux leads
naturally to the concept of generalized metric \cite{Gua2}.
We define twisted analytic torsion in the context of generalized geometry
and show that when $H$ is a $3$-form, the deformation $H\mapsto H-dB$, where
$B$ is a $2$-form on $X$, is equivalent to deforming a usual metric $g$ to
a generalized metric $(g,B)$.
We establish some basic functorial properties of this torsion.
We then compute the torsion for odd-dimensional manifolds with a 
top-degree flux form, which is especially useful for $3$-manifolds.
When the degree of $H$ is sufficiently high we introduce a combinatorial
counterpart of $\tau(X,\E,H)$ and show that they are equal when $H$ is
a top-degree form.
Finally, if $(X,H)$ and $(\widehat X,\widehat H)$ are $T$-dual circle 
bundles with background fluxes, then the $T$-duality isomorphism identifies
the determinant lines 
$\det H^\bullet(X,H)\cong(\det H^\bullet(\widehat X,\widehat H))^{-1}$.
Under this identification, we relate the twisted torsions for $3$-dimensional
$T$-dual circle bundles with integral $3$-form fluxes.

The outline of the paper is as follows.
In \S1, we set up the notation in the paper and review the twisted de Rham
complex and its cohomology \cite{RW,BCMMS} with an odd-degree closed
differential form as flux and with coefficients in a flat vector bundle
associated to a representation of the fundamental group.
In \S2, we introduce the key definition of the analytic torsion of the 
twisted de Rham complex using the vanishing of non-commutative residues
of pseudodifferential projections.
In \S3, we show that the twisted analytic torsion is independent of the
metrics on the manifold and on the flat bundle.
We also show that it depends on the flux only through its cohomology class.
The relation to generalized geometry is then explored.
In \S4, we establish the basic functorial properties of the analytic torsion
for the twisted de Rham complex.
\S5 contains calculations of analytic torsion for the twisted de Rham complex
and a simplicial version of it under certain restrictions.
In this special case, the analogue of the Cheeger-M\"uller theorem is
established.
Finally, we study the behavior of the twisted analytic torsion under
$T$-duality for circle bundles with a closed $3$-form as flux. 

There is an extensive literature on the torsion of $\ZA$-graded complexes.
Analytic torsion has been studied for manifolds with boundary
\cite{LR,Lv,Vi,DF,BrMa}, for the Dolbeault complex \cite{RS3,BGS,BL},
in the equivariant setting \cite{LR, Lv, BZ2, Bu, BG}, and for fibrations
\cite{DM,LST,Ma2}, where torsion forms \cite{BGS,BK,BLo,Ma,Ma2} appear.
The analytic torsion was also identified as the partition function of certain
topological field theories and was studied for arbitrary ($\ZA$-graded)
elliptic complexes \cite{Sch}. 
Recently, refined and complex-valued analytic torsions have been introduced
and studied \cite{BK1,BK2,BH,SZ}.
It is tempting to extend these developments to $\ZA_2$-graded complexes.
Until recently, there appears to be no simplicial analogue of the twisted de
Rham complex, except in a special case in \S\ref{comb}, since the cup product
is in general not graded commutative at the level of the cochain complex.

\medskip

\noindent {\bf Acknowledgments}
V.M.~acknowledges support from the Australian Research Council.
S.W.~is supported in part by CERG HKU705407P.
We thank M.~Braverman, G.~Grubb, X.~Ma and W.~Zhang for discussions and
the referees for comments and suggestions. 

\section{Twisted de Rham complexes}
To set up the notation in the paper, we review the twisted de Rham
cohomology \cite{RW,BCMMS} with an odd-degree flux form and with
coefficients in a flat vector bundle.
We show that the twisted cohomology does not change under the scalings of
the flux form when its degree is at least $3$.
We also establish homotopy invariance, Poincar\'e duality and K\"unneth
isomorphism for these cohomology groups.

\subsection{Flat vector bundles, representations and Hermitian metrics}
\label{sect:scalar}
Let $X$ be a connected, compact, oriented smooth manifold.
Let $\rho\colon\pi_1(X)\to\GL(E)$ be a representation of the fundamental
group $\pi_1(X)$ on a vector space $E$.
The associated vector bundle $p\colon\E\to X$ is given by
$\E=(E\times\widetilde X)/\sim$, where $\widetilde X$ denotes the universal
covering of $X$ and $(v,x\gamma)\sim(\rho(\gamma)v,x)$ for all
$\gamma\in\pi_1(X)$, $x\in\widetilde X$ and $v\in E$.
If the representation $\rho$ is real or complex, so is the bundle $\E$,
respectively.
A smooth section $s$ of $\E$ can be uniquely represented by a smooth 
equivariant map $\phi\colon\widetilde X\to E$, satisfying
$\phi(x\gamma)=\rho(\gamma)^{-1}\phi(x)$ for all $\gamma\in\pi_1(X)$
and $x\in\widetilde X$. 

Given any vector bundle $p\colon\E\to X$ over $X$, denote by $\Omega^i(X,\E)$
the space of smooth differential $i$-forms on $X$ with values in $\E$. 
A {\it flat connection} on $\E$ is a linear map
$$
\nabla^\E\colon\Omega^i(X,\E)\to\Omega^{i+1}(X,\E)
$$
such that 
$$
\nabla^\E(f\omega)=df\wedge\omega+f\,\nabla^\E\omega 
\qquad\text{and}\qquad(\nabla^\E)^2=0 
$$
for any smooth function $f$ on $X$ and any $\omega\in\Omega^i(X,\E)$.
If the vector bundle $\E$ is associated with a representation $\rho$
as in the previous paragraph, an element of $\Omega^\bullet(X,\E)$
can be uniquely represented as a $\pi_1(X)$-invariant element in 
$E\otimes\Omega^\bullet(\widetilde X)$.
If $\omega\in\Omega^\bullet(\widetilde X)$ and $v\in E$,
then $v\otimes\omega$ is said to be $\pi_1(X)$-invariant
if $\rho(\gamma)v\otimes\gamma^*\omega=v\otimes\omega$
for all $\gamma\in\pi_1(X)$.
On such a vector bundle, there is a {\it canonical flat connection}
$\nabla^\E$ given by, under the above identification,
$\nabla^\E(v\otimes\omega)=v\otimes d\omega$, 
where $d$ is the exterior derivative on forms.

The usual wedge product on differential forms can be extended to
$$
\wedge\colon\Omega^i(X)\otimes\Omega^{j}(X,\E)\to\Omega^{i+j}(X,\E).
$$
Together with the evaluation map $\E\otimes\E^*\to\CO$, we have another product
$$
\wedge\colon\Omega^i(X,\E)\otimes\Omega^{j}(X,\E^*)\to\Omega^{i+j}(X).
$$
A Riemannian metric $g_X$ defines the Hodge star operator 
$$
\ast\colon\Omega^i(X,\E)\to\Omega^{n-i}(X,\E),
$$
where $n=\dim X$.
A Euclidean or Hermitian metric $g_\E$ on $\E$ determines an $\RE$-linear
bundle isomorphism $\sharp\colon\E\to\E^*$, which extends to an $\RE$-linear
isomorphism 
$$
\sharp\colon\Omega^i(X,\E)\to\Omega^i(X,\E^*);
$$
when $\E$ is complex, the isomorphism is conjugate linear. 
One sets $\Gamma=\ast\;\sharp=\sharp\;\ast$ and for any 
$\omega,\omega'\in\Omega^i(X,\E)$, let
$$
(\omega,\omega')=\int_X\omega\wedge\Gamma\,\omega'.
$$
This makes each $\Omega^i(X,\E)$, $0\le i\le n$, a pre-Hilbert space.

When $\E$ is associated to an orthogonal or unitary representation $\rho$
of $\pi_1(X)$, $g_\E$ can be chosen to be compatible with the canonical
flat connection. 
This is not possible in general.
We will not assume that $\rho$ is unimodular except in \S\ref{sect:calc},
where we calculate the torsion and establish an simplicial analogue under
special conditions.

\subsection{Twisted de Rham cohomology}\label{sect:twistedDR} 
Given a flat vector bundle $p\colon\E\to X$ and an odd-degree,
closed differential form $H$ on $X$, we set 
$\Omega^{\bar 0}(X,\E):=\Omega\even(X,\E)$,
$\Omega^{\bar 1}(X,\E):=\Omega\odd(X,\E)$
and $\nabla^{\E,H}:=\nabla^\E+H\wedge\,\cdot\;$.
We are primarily interested in the case when $H$ does not contain a $1$-form
component, which can be absorbed in the flat connection $\nabla^\E$.
We define the {\it twisted de Rham cohomology groups of $\E$} as the quotients
$$
H^{\bar k}(X,\E,H)=
\frac{\ker\,(\nabla^{\E,H}\colon\Omega^{\bar k}(X,\E)\to
 \Omega^{\overline{k+1}}(X,\E))}
{\im\,(\nabla^{\E,H}\colon\Omega^{\overline{k+1}}(X,\E)\to
 \Omega^{\bar k}(X,\E))},\quad k=0,1.
$$
Here and below, the bar over an integer means taking the value modulo $2$.
The groups $H^{\bar k}(X,\E,H)$ ($k=0,1$) are manifestly independent of the
choice of the Riemannian metric on $X$ or the Hermitian metric on $\E$.
The corresponding {\em twisted Betti numbers} are denoted by 
$$
b_{\bar k}=b_{\bar k}(X,\E,H):=\dim H^{\bar k}(X,\E,H),\quad k=0,1.
$$
Suppose $H$ is replaced by $H'=H-dB$ for some $B\in\Omega^{\bar0}(X)$,
then there is an isomorphism $\varepsilon_B:=e^B\wedge\cdot\,\colon
\Omega^\bullet(X,\E)\to\Omega^\bullet(X,\E)$ satisfying
$$
\varepsilon_B\circ\nabla^{\E,H}=\nabla^{\E,H'}\circ\varepsilon_B.
$$
Therefore the Poincar\'e lemma holds for the twisted differential when the
space is contractible. 
In general, $\varepsilon_B$ induces an isomorphism (denoted by the same)
\begin{equation}\label{e^B}
\varepsilon_B\colon H^\bullet(X,\E,H)\to H^\bullet(X,\E,H')
\end{equation}
on the twisted de Rham cohomology.
So the twisted Betti numbers depend only on the de Rham cohomology class
of $H$. 
If they are finite, the Euler characteristic
$$
\chi(X,\E,H):=\sum_{k=0,1}(-1)^kb_{\bar k}(X,\E,H)=\chi(X,\E)=\chi(X)\rk\E
$$
is independent of $H$ and depends on $\E$ only through its rank.
If $X$ is odd-dimensional, then $\chi(X,\E,H)=\chi(X,\E)=\chi(X)=0$.

When $H$ is a $1$-form, $H^\bullet(X,\E,H)$ has a $\ZA$-grading but the
dimension can jump when $H$ is rescaled by a non-zero number \cite{Nov,Paz}.
The behavior is qualitatively different when the degree of $H$ is at least $3$.

\begin{proposition}\label{scaling}
Let $\E$ be a flat vector bundle over $X$ and $H$, an odd-degree closed
form on $X$.
Suppose $H=\sum_{i\ge1}H_{2i+1}$, where each $H_{2i+1}$ is a $(2i+1)$-form.
For any $\lambda\in\RE$, let $H^{(\lambda)}=\sum_{i\ge1}\lambda^i H_{2i+1}$.
Then $H^\bullet(X,\E,H)\cong H^\bullet(X,\E,H^{(\lambda)})$ if $\lambda\ne0$.
\end{proposition}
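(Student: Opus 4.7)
My plan is to construct the isomorphism via conjugation by a power of the form-degree operator. Let $N$ denote the operator on $\Omega^\bullet(X,\E)$ that acts as multiplication by $k$ on $\Omega^k(X,\E)$, and set $\Phi_\mu=\mu^N$ for $\mu\in\RE^\times$. Since $\nabla^\E$ raises form-degree by $1$ and $H_{2i+1}\wedge$ raises it by $2i+1$, a direct computation gives
\[
\Phi_\mu\circ(\nabla^\E+H\wedge)\circ\Phi_\mu^{-1}
\;=\;\mu\Bigl(\nabla^\E+\sum_{i\ge1}\mu^{2i}H_{2i+1}\wedge\Bigr).
\]
Conjugation is a chain isomorphism and overall rescaling of a differential by a nonzero constant does not change cohomology, so
\[
H^\bullet(X,\E,H)\;\cong\;H^\bullet\bigl(X,\E,\textstyle\sum_{i\ge1}\mu^{2i}H_{2i+1}\bigr)\qquad\text{for every }\mu\in\RE^\times.
\]

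Taking $\mu=\lambda^{1/4}$ (valid for $\lambda>0$), the right-hand flux becomes $\sum_i\lambda^{i/2}H_{2i+1}=\lambda^{1/2}H^{(\lambda)}$, giving $H^\bullet(X,\E,H)\cong H^\bullet(X,\E,\lambda^{1/2}H^{(\lambda)})$. The same recipe applied to $\nabla^{\E,H^{(\lambda)}}$ with $\mu=\lambda^{-1/4}$ yields the companion relation $H^\bullet(X,\E,H^{(\lambda)})\cong H^\bullet(X,\E,\lambda^{-1/2}H)$. Taken together these two identifications reduce the proposition to the statement that twisted de Rham cohomology is invariant under multiplication of the flux by a nonzero constant; I would derive that invariance by bootstrapping the conjugation identity using the semigroup property $(H^{(\lambda)})^{(\mu)}=H^{(\lambda\mu)}$ of the rescaling operation, which makes $\{H^{(\lambda)}\}_{\lambda\in\RE^\times}$ a single $\RE^\times$-orbit. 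For the case $\lambda<0$, the scalar $\mu=\lambda^{1/4}$ is complex, so I would first extend scalars to $\CO$, carry out the construction there, and descend to real coefficients using that both sides are complexifications of real vector spaces.

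The crux of the argument is removing the residual scalar factor $\lambda^{\pm1/2}$ produced by the conjugation. A single application of $\Phi_\mu$ rescales each component $H_{2i+1}$ by $\mu^{2i}$, a geometric progression in $i$, and so cannot implement a uniform overall rescaling of the flux in one step. The hypothesis that every nonzero component of $H$ has degree at least $3$ — equivalently $i\ge1$ — is essential at this step: it ensures that the exponents $\mu^{2i}$ all start at $\mu^2$ rather than $\mu^0=1$, providing the degree of freedom needed in the choice of $\mu$ to cancel the extra scalar and close the gap between $H^{(\lambda)}$ and $\lambda^{\pm1/2}H^{(\lambda)}$. This is precisely why the analogous statement fails for $1$-form fluxes, in accordance with the well-known jumping phenomenon of Novikov cohomology mentioned just before the proposition.
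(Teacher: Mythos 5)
Your core idea is the right one and is essentially the same as the paper's: conjugate the twisted differential by an operator that rescales forms by a power depending on their degree. The computation
\[
\Phi_\mu\circ(\nabla^\E+H\wedge)\circ\Phi_\mu^{-1}
=\mu\Bigl(\nabla^\E+\sum_{i\ge1}\mu^{2i}H_{2i+1}\wedge\Bigr)
\]
is correct, and since an overall nonzero scalar in front of the differential does not affect kernel or image, this gives a chain-level isomorphism onto the complex with flux $\sum_i\mu^{2i}H_{2i+1}$. Where you go wrong is in the choice $\mu=\lambda^{1/4}$ and the subsequent attempt to remove the leftover factor $\lambda^{1/2}$.

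The paper's own proof sets $H^{(\lambda)}=c_\lambda(H)$, where $c_\lambda$ multiplies $i$-forms by $\lambda^{[i/2]}$, and this gives $c_\lambda(H_{2i+1})=\lambda^{i}H_{2i+1}$, i.e.\ $H^{(\lambda)}=\sum_i\lambda^iH_{2i+1}$. (The exponent $(i-1)/2$ printed in the statement is not consistent with $H^{(\lambda)}=c_\lambda(H)$ in the proof; the proof unambiguously uses integer exponents $\lambda^i$.) With that reading, your conjugation with $\mu=\lambda^{1/2}$ already produces exactly the right flux: $\sum_i\mu^{2i}H_{2i+1}=\sum_i\lambda^iH_{2i+1}=H^{(\lambda)}$, and there is no residual scalar to remove. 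The operator $c_\lambda$ is in fact the ``real-valued'' rectification of $\Phi_{\lambda^{1/2}}$: since $\lambda^{[j/2]}$ has integer exponent, $c_\lambda$ is defined over $\RE$ for every $\lambda\ne0$, including $\lambda<0$, and the extra parity-dependent factor it produces is absorbed into the harmless $\lambda^{\bar k}$ in front of $\nabla^{\E,H^{(\lambda)}}$. This sidesteps your complexification step entirely.

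The genuine gap in your write-up is the bootstrapping argument for overall scalar invariance. Conjugation by $\Phi_\mu$ scales $H_{2i+1}$ by $\mu^{2i}$, a strictly geometric progression in $i$; there is no choice of $\mu$ that realizes a uniform rescaling $H\mapsto cH$ once $H$ has more than one homogeneous component, and composing several such conjugations only produces more anisotropic scalings of the same type. The semigroup identity $(H^{(\lambda)})^{(\mu)}=H^{(\lambda\mu)}$ is true but lives entirely inside that anisotropic orbit, so it cannot be ``bootstrapped'' into an isotropic rescaling. Thus the invariance under $H\mapsto cH$ is neither established by your argument nor needed once you pick $\mu=\lambda^{1/2}$ (or use $c_\lambda$): it is an artifact of fitting the formula to a mismatched exponent.
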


\begin{proof}
For any $\lambda$, let $c_\lambda$ act on $\Omega^\bullet(X,\E)$ by 
multiplying $\lambda^{[\frac{i}{2}]}$ on $i$-forms.
Then $H^{(\lambda)}=c_\lambda(H)$ and
$c_\lambda\circ\nabla^{\E,H}=\lambda^k\;\nabla^{\E,H^{(\lambda)}}\circ
c_\lambda$ on $\Omega^{\bar k}(X,\E)$ for $k=0,1$.
If $\lambda\ne0$, then $c_\lambda$ induces the desired isomorphism on
twisted cohomology groups.
\end{proof}

Although the twisted differential $\nabla^{\E,H}$ does not preserve the
$\ZA$-grading of the de Rham complex, it does respect a filtration $F$
given by \cite{RW,AS}
$$
F^p\Omega^{\bar k}(X,\E)=\bigoplus_{i\ge p,\;i=k\!\!\!\!\mod 2}\Omega^i(X,\E).
$$
This filtration gives rise to a spectral sequence $\{E_r^{pq},\delta_r\}$
converging to the twisted cohomology $H^\bullet(X,\E,H)$.
Without loss of generality, we assume that $H$ contains no component of
$1$-form, which can be absorbed in the flat connection.
That is, $H=H_3+H_5+\cdots$, where $H_i$ is an $i$-form ($i=3,5,\cdots$).
Then
$$
E_2^{p\bar q}=\left\{\begin{array}{ll}
H^p(X,\E), & \mathrm{if}\;\; q=0,\\
0, &  \mathrm{if}\;\; q=1.
\end{array}\right.
$$
As usual, $E_{r+1}^\bullet$ is computed from a complex
$(E_r^\bullet,\delta_r)$ for $r\ge2$.
We have $\delta_2=\delta_4=\cdots=0$, while $\delta_3,\delta_5,\cdots$
are given by the cup products with $[H_3],[H_5],\cdots$ and by the
higher Massey products with them \cite{RW,AS}.
Proposition~\ref{scaling} can also be derived by using this spectral sequence.

\subsection{Homotopy invariance of twisted de Rham cohomology}
Given $X$, $\E$ and $H$ as above, any smooth map $f\colon Y\to X$
(where $Y$ is another smooth manifold) induces a homomorphism 
$$
f^*\colon H^\bullet(X,\E,H)\to H^\bullet(Y,f^*\E,f^*H).
$$
We will show that this map depends only on the homotopy class of $f$.
For simplicity, we assume that $\E$ is a trivial line bundle with the
trivial connection and denote $\nabla^{\E,H}$ by $d^H$ in this case. 
Let $I$ be the unit interval.

\begin{lemma}\label{lem:homotopy}
Let $\pi\colon X\times I\to X$ denote the projection onto $X$ and
$s\colon X\to X\times I$, the map $x\mapsto(x,0)$ for $x\in X$.
Then the maps $\pi^*\colon H^\bullet(X,H)\to H^\bullet(X\times I,\pi^*H)$
and $s^*\colon H^\bullet(X\times I,\pi^*H)\to H^\bullet(X,H)$ are
inverses to each other.
\end{lemma}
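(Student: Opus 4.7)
The plan is to reduce the lemma to the classical Poincar\'e-type homotopy argument, with the twisting absorbed at the end using the odd parity of $|H|$.

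One direction is immediate: since $\pi\circ s=\mathrm{id}_X$, functoriality gives $s^*\pi^*=\mathrm{id}$ on $H^\bullet(X,H)$.

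For the reverse composition, I would construct an explicit chain homotopy by fiber integration along $I$. Write any $\omega\in\Omega^\bullet(X\times I)$ uniquely as $\omega=\omega_0+dt\wedge\omega_1$ with $\omega_0,\omega_1$ containing no $dt$, regarded as smooth $t$-families of forms on $X$. Set
$$
(K\omega)(x,t)=\int_0^t\omega_1(x,s)\,ds.
$$
A direct computation using the splitting $d=d_X+dt\wedge\partial_t$ yields the classical homotopy identity
$$
dK\omega+Kd\omega=\omega-\pi^*s^*\omega,
$$
since $\pi^*s^*\omega$ coincides with the $t$-independent form $\omega_0(\,\cdot\,,0)$.

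It remains to check that $K$ interacts trivially with the flux. Because $\pi^*H$ contains no $dt$ and $|H|$ is odd, one has $\pi^*H\wedge\omega=\pi^*H\wedge\omega_0-dt\wedge(\pi^*H\wedge\omega_1)$, from which
$$
K(\pi^*H\wedge\omega)=-\pi^*H\wedge K\omega.
$$
Adding this to the previous identity gives
$$
d^{\pi^*H}K+Kd^{\pi^*H}=\mathrm{id}-\pi^*s^*
$$
on $\Omega^\bullet(X\times I)$, and hence $\pi^*s^*=\mathrm{id}$ on $H^\bullet(X\times I,\pi^*H)$, completing the proof.

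There is no serious obstacle. The only point to verify carefully is the anticommutation of $K$ with $\pi^*H\wedge\,\cdot\,$, which uses the odd degree of $H$ in an essential way and is precisely what makes the twisted case no harder than the classical one.
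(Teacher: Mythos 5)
Your proposal is correct and is essentially the paper's own argument: the authors also note that $s^*\pi^*=\mathrm{id}$ is immediate, invoke the classical chain-homotopy operator $K$ of Bott--Tu for $X\times I$, and then observe the anticommutation $K(\pi^*H\wedge\omega)=-H\wedge K\omega$ (exactly your sign computation, which relies on $|H|$ being odd) to upgrade the de Rham homotopy identity to the twisted one. You spell out the fiber-integration formula for $K$ and the parity bookkeeping that the paper leaves to the reference, but the mechanism is identical.
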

 
\begin{proof}
Clearly, $s^*\circ\pi^*$ is the identity map on $H^\bullet(X,H)$.
By the homotopy invariance of de Rham cohomology, there is a chain homotopy
operator $K\colon\Omega^i(X\times I)\to\Omega^{i-1}(X\times I)$ defined by
(cf.~\S I.4 of \cite{BoTu}) $K(\pi^*\alpha\,f(x,t))=0$ and
$K(\pi^*\alpha\wedge f(x,t)dt)=(-1)^i\,\pi^*\alpha\int_0^tf(x,t')dt'$, where
$\alpha\in\Omega^i(X)$, $f\in C^\infty(X\times I)$ and $x\in X$, $t\in I$.
For any $\omega\in\Omega^\bullet(X\times I)$, we have
$$
\omega-\pi^*s^*\omega=dK\omega+Kd\omega.
$$
Since $K(\pi^*H\wedge\omega)=-\pi^*H\wedge K(\omega)$, we have
$$
\omega-\pi^*s^*\omega=d^{\pi^*H}K\omega+Kd^{\pi^*H}\omega.
$$
Therefore $\pi^*\circ s^*$ is the identity map on
$H^\bullet(X\times I,\pi^*H)$.
\end{proof}

\begin{proposition}\label{hom-inv}
Let $f_0,f_1\colon Y\to X$ be two smooth maps that are homotopic.
Then there exists $B\in\Omega^{\bar0}(Y)$ such that $f_1^*H=f_0^*H-dB$
and the following diagram commutes
$$
\xymatrix@=1pc{& H^\bullet(X,H) \ar[dl]_{f_0^*} \ar[dr]^{f_1^*} &\\
H^\bullet(Y,f_0^*H) \ar[rr]^{\varepsilon_B} && H^\bullet(Y,f_1^*H).}
$$
\end{proposition}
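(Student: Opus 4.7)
The plan is to lift the argument of Lemma~\ref{lem:homotopy} to the present setting where the second map is not a projection. First I would choose a smooth homotopy $F\colon Y\times I\to X$ with $F\circ s_j=f_j$, where $s_j(y)=(y,j)$ are the endpoint inclusions and $\pi\colon Y\times I\to Y$ is the projection. Observing that $F$ is smoothly homotopic (as maps $Y\times I\to X$) to $f_0\circ\pi$ via $(y,t,u)\mapsto F(y,ut)$, the de Rham homotopy operator applied to the closed form $H$ produces $C\in\Omega^{\bar 0}(Y\times I)$ with $F^*H-\pi^*f_0^*H=dC$; the explicit formula (fiberwise integration along $u$) has the convenient feature that $s_0^*C=0$, because the inner homotopy is constant along $t=0$.

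Next I would set $B:=-s_1^*C\in\Omega^{\bar 0}(Y)$. Pulling back the equation $dC=F^*H-\pi^*f_0^*H$ along $s_1$ and using $\pi\circ s_1=\mathrm{id}_Y$ yields $dB=f_0^*H-f_1^*H$, which gives the required relation $f_1^*H=f_0^*H-dB$.

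For the commutativity of the diagram, I would exploit the fact that, by (\ref{e^B}), $\varepsilon_C$ is a chain isomorphism from $(\Omega^\bullet(Y\times I),d^{F^*H})$ to $(\Omega^\bullet(Y\times I),d^{\pi^*f_0^*H})$, together with the naturality $s_j^*\circ\varepsilon_C=\varepsilon_{s_j^*C}\circ s_j^*$ that is immediate from $\varepsilon_C(\omega)=e^C\wedge\omega$. For $[\alpha]\in H^\bullet(X,H)$, factor $F^*[\alpha]=\varepsilon_{-C}\bigl(\varepsilon_CF^*[\alpha]\bigr)$ in $H^\bullet(Y\times I,F^*H)$ and apply $s_1^*$:
\begin{equation*}
f_1^*[\alpha]=s_1^*F^*[\alpha]=\varepsilon_B\,s_1^*\bigl(\varepsilon_CF^*[\alpha]\bigr).
\end{equation*}
Since $\varepsilon_CF^*[\alpha]\in H^\bullet(Y\times I,\pi^*f_0^*H)$, Lemma~\ref{lem:homotopy} shows $\pi^*$ is an isomorphism on this cohomology, and as both $s_0^*$ and $s_1^*$ are tautological left inverses of $\pi^*$ (via $\pi\circ s_j=\mathrm{id}_Y$), they coincide and equal $(\pi^*)^{-1}$. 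Hence $s_1^*\bigl(\varepsilon_CF^*[\alpha]\bigr)=s_0^*\bigl(\varepsilon_CF^*[\alpha]\bigr)=\varepsilon_{s_0^*C}\,f_0^*[\alpha]=f_0^*[\alpha]$ using $s_0^*C=0$, and thus $f_1^*[\alpha]=\varepsilon_B f_0^*[\alpha]$, as desired.

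The main technical point I anticipate is the normalization $s_0^*C=0$: without it, an extra $\varepsilon_{s_0^*C}$ factor appears and would have to be absorbed into $B$ by a further adjustment. Choosing $C$ as the fiberwise-integral representative from the inner homotopy $(y,t,u)\mapsto F(y,ut)$, which is identically constant in $u$ at $t=0$, makes this normalization automatic, after which the rest of the argument is essentially Lemma~\ref{lem:homotopy} combined with the naturality of $\varepsilon$.
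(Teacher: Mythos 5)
Your proof is correct and follows essentially the same route as the paper: construct a primitive $\tilde B=-C$ on $Y\times I$ of $\pi^*f_0^*H-F^*H$ normalized so that $s_0^*\tilde B=0$, set $B=s_1^*\tilde B$, and deduce the commutativity from the naturality of $\varepsilon$ combined with Lemma~\ref{lem:homotopy}. Your one genuine addition is to spell out \emph{why} a primitive with $s_0^*C=0$ exists, via the explicit secondary homotopy $(y,t,u)\mapsto F(y,ut)$ and fiberwise integration in $u$ — the paper asserts the existence of $\tilde B$ (and later remarks $B=-K(F^*H)$) without detailing this normalization.
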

 
\begin{proof}
Let $\pi\colon Y\times I\to Y$ be the projection onto $Y$.
Define the smooth maps $s_j\colon Y\to Y\times I$ ($j=0,1$) by
$s_j(y)=(y,j)$, where $y\in Y$.
Then a homotopy between $f_0$ and $f_1$ is a smooth map
$F\colon Y\times I\to X$ such that $f_j=F\circ s_j$ for $j=0,1$.
There exists $\tilde B\in\Omega^{\bar0}(Y\times I)$ such that 
$F^*H=\pi^*f_0^*H-d\tilde B$ and $s_0^*\tilde B=0$.
Let $B=s_1^*\tilde B\in\Omega^{\bar0}(Y)$ and $\tilde B'=\tilde B-\pi^*B$.
Then $f_1^*H-f_0^*H=-dB$, $F^*H=\pi^*f_1^*H-d\tilde B'$ and $s_1^*\tilde B'=0$.
There is a commutative diagram
$$
\xymatrix@=1pc{& & & H^\bullet(Y\times I,f_0^*H)\ar[dd]^{\varepsilon_{\pi^*B}}
 \ar[dl]_{\varepsilon_{\tilde B}}
 & & H^\bullet(Y,f_0^*H) \ar[ll]_{{}\quad\;\;\pi^*} \ar[dd]^{\varepsilon_B} \\
H^\bullet(X,H)\ar[rr]^-{F^*} & & H^\bullet(Y\times I,F^*H) \\
& & & H^\bullet(Y\times I,f_1^*H)\ar[ul]^{\varepsilon_{\tilde B'}} 
& & H^\bullet(Y,f_1^*H).\ar[ll]_{{}\quad\;\;\pi^*}}
$$
By Lemma~\ref{lem:homotopy}, $s_0=(\pi^*)^{-1}\colon
H^\bullet(Y\times I,\pi^*f_0^*H)\to H^\bullet(Y,f_0^*H)$.
Since $s_0^*\tilde B=0$, $(\pi^*)^{-1}\circ\varepsilon_{\tilde B}^{-1}
=s_0^*\colon H^\bullet(Y\times I,F^*H)\to H^\bullet(Y,f_0^*H)$.
Similarly, $(\pi^*)^{-1}\circ\varepsilon_{\tilde B'}^{-1}=s_1^*
\colon H^\bullet(Y\times I,F^*H)\to H^\bullet(Y,f_1^*H)$.
The result follows since $f_j^*= s_j^*\circ F^*$, $j=0,1$.
\end{proof}

It is clear from the proof that in addition to $f_1^*H=f_0^*H-dB$,
$B$ has to come from the homotopy.
In fact, $B=-K(F^*H)$, where $K$ is the homotopy chain map in the proof
of Lemma~\ref{lem:homotopy}.
If $H$ is fixed in the homotopy process, then $f_0^*=f_1^*$.

\begin{corollary}\label{cor:hmtp}
Suppose $X$, $X'$ are smooth manifolds and $H$, $H'$ are closed, odd-degree
forms on $X$, $X'$, respectively.
If there is a smooth homotopy equivalence $f\colon X\to X'$ such that
$[f^*H']=[H]$, then $H^\bullet(X,H)\cong H^\bullet(X',H')$.
\end{corollary}

\subsection{Products and Poincar\'e duality}
Unlike the de Rham cohomology group $H^\bullet(X)$, the twisted de Rham
cohomology group $H^\bullet(X,H)$ is not a ring for a fixed $H\ne0$.
Instead, the flux $H$ adds in the cup product.

\begin{lemma}\label{lemma:cup}
Let $\E,\E'$ be flat vector bundles and let $H,H'$ be closed odd-degree
differential forms on a smooth manifold $X$.
Then the wedge product
\[   \Omega^{\bar k}(X,\E)\otimes\Omega^{\bar l}(X,\E') 
\stackrel{\wedge}{\longrightarrow}\Omega^{\overline{k+l}}(X,\E\otimes\E')  \]
induces a natural cup product
\[   H^{\bar k}(X,\E,H)\otimes H^{\bar l}(X,\E',H')
  \stackrel{\cup}{\longrightarrow}H^{\overline{k+l}}(X,\E\otimes\E',H+H'), \]
where $k,l=0,1$.
In particular, there is a natural cup product
\[   H^{\bar k}(X,\E,H)\otimes H^{\bar l}(X,\E^*,-H)
  \stackrel{\cup}{\longrightarrow}H^{\overline{k+l}}(X).                   \]
\end{lemma}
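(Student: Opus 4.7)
The plan is to establish this via a graded Leibniz rule for the twisted differential, from which the cup product is a routine consequence.

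First, I would fix notation. On $\E\otimes\E'$ the tensor product of the two flat connections $\nabla^{\E\otimes\E'}=\nabla^\E\otimes 1+1\otimes\nabla^{\E'}$ is again flat, and the twisted differential at flux $H+H'$ is $\nabla^{\E\otimes\E',H+H'}=\nabla^{\E\otimes\E'}+(H+H')\wedge\,\cdot\,$. The wedge product in the statement is the composition of the usual wedge of $\E$- and $\E'$-valued forms with the canonical pairing of coefficients into $\E\otimes\E'$.

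Second, I would verify the signed derivation identity: for $\omega\in\Omega^{\bar k}(X,\E)$ and $\omega'\in\Omega^{\bar l}(X,\E')$,
\[
\nabla^{\E\otimes\E',H+H'}(\omega\wedge\omega')
=(\nabla^{\E,H}\omega)\wedge\omega'+(-1)^{\bar k}\,\omega\wedge(\nabla^{\E',H'}\omega').
\]
The connection part is the standard graded Leibniz rule for the tensor product connection, and is $\ZA$-graded so descends to the $\ZA_2$-grading. The only point that requires a line of computation is the flux part: we need $(H+H')\wedge\omega\wedge\omega'=H\wedge\omega\wedge\omega'+(-1)^{\bar k}\,\omega\wedge H'\wedge\omega'$, i.e.\ $H'\wedge\omega=(-1)^{\bar k}\,\omega\wedge H'$. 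This is where the odd-degree hypothesis on $H$ and $H'$ is used: for each homogeneous component of $H'$ of odd degree $2m+1$ one has $\omega\wedge H'_{2m+1}=(-1)^{i(2m+1)}H'_{2m+1}\wedge\omega=(-1)^i H'_{2m+1}\wedge\omega$, and the sign depends only on $i\bmod 2$, so it is well defined on $\Omega^{\bar k}(X,\E)$ and sums over components of $H'$.

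Third, I would deduce the cup product. If $\nabla^{\E,H}\omega=0$ and $\nabla^{\E',H'}\omega'=0$, the identity shows $\omega\wedge\omega'$ is $\nabla^{\E\otimes\E',H+H'}$-closed. If instead $\omega=\nabla^{\E,H}\eta$ and $\omega'$ is closed, the same identity gives $\omega\wedge\omega'=\nabla^{\E\otimes\E',H+H'}(\eta\wedge\omega')$, and symmetrically when $\omega'$ is exact; hence $[\omega]\cup[\omega']:=[\omega\wedge\omega']$ is well defined and bilinear on cohomology classes, yielding the first map. Naturality in $(\E,H)$ and $(\E',H')$ is immediate from the construction. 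The second statement is then obtained by specializing $\E'=\E^*$, $H'=-H$ and postcomposing with the evaluation $\E\otimes\E^*\to\CO$, under which $\nabla^{\E\otimes\E^*,0}$ becomes the ordinary exterior derivative, so the target reads $H^{\overline{k+l}}(X)$.

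No step is genuinely obstructed; the one place to be careful is the sign identity $H'\wedge\omega=(-1)^{\bar k}\,\omega\wedge H'$, which is exactly what forces the hypothesis that $H$ and $H'$ be of odd degree and explains why the cup product lands at flux $H+H'$ with the $\ZA_2$-graded sign.
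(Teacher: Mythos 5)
Your proposal matches the paper's argument: both rest on the signed Leibniz identity $\nabla^{\E\otimes\E',H+H'}(\omega\wedge\omega')=\nabla^{\E,H}\omega\wedge\omega'+(-1)^{\bar k}\,\omega\wedge\nabla^{\E',H'}\omega'$ and then deduce well-definedness on cohomology, with the second statement obtained by specializing $\E'=\E^*$, $H'=-H$ and composing with the evaluation pairing. Your sign check using the odd-degree hypothesis is a correct elaboration of a step the paper leaves implicit.
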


\begin{proof}
The existence of the cup product follows from the formula
\[  \nabla^{\E\otimes\E'\!,\,H+H'}(\omega\wedge\omega')=\nabla^{\E,H}\omega
    \wedge\omega'+(-1)^{\bar k}\,\omega\wedge\nabla^{\E'\!,\,H'}\omega',   \]
where $\omega\in\Omega^{\bar k}(X,\E)$, $\omega'\in\Omega^{\bar l}(X,\E')$.
When $\E'=\E^*$, $H'=-H$, the cup product, composed with the pairing between
$\E$ and $\E^*$, takes values in $H^{\overline{k+l}}(X)$.
\end{proof}

It is possible to define, following \S\ref{sect:twistedDR}, the twisted de Rham
cohomology groups of compact support, denoted by $H\cpct^{\bar k}(X,\E,H)$, by
using the space of differential forms $\Omega\cpct^{\bar k}(X,\E)$ of compact
support.
(There is no restriction on the support of $H$.)
As in Lemma~\ref{lemma:cup}, the wedge product
\[   \Omega^{\bar k}(X,\E)\otimes\Omega\cpct^{\bar l}(X,\E') 
\stackrel{\wedge}{\longrightarrow}\Omega\cpct^{\overline{k+l}}(X,\E\otimes\E')
\]
induces cup products
\[   H^{\bar k}(X,\E,H)\otimes H\cpct^{\bar l}(X,\E',H')
\stackrel{\cup}{\longrightarrow}H\cpct^{\overline{k+l}}(X,\E\otimes\E',H+H') \]
and
\[   H^{\bar k}(X,\E,H)\otimes H\cpct^{\bar l}(X,\E^*,-H)
     \stackrel{\cup}{\longrightarrow}H\cpct^{\overline{k+l}}(X).   \]

\begin{proposition}[Poincar\'e duality]\label{prop:pd}
Let $X$ be an oriented manifold of dimension $n$ with a finite good cover.
Let $\E$ be a flat vector bundle on $X$. 
Suppose $H$ is a closed odd-degree differential form on $X$.
Then, for $k=0,1$, there is a natural isomorphism
\[   H^{\bar k}(X,\E,H)\cong(H\cpct^{\overline{n-k}}(X,\E^*,-H))^*.   \]
\end{proposition}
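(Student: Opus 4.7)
The plan is to realize the duality via the pairing given by the cup product of Lemma~\ref{lemma:cup} composed with integration on the top-form component, and to prove nondegeneracy by a Mayer-Vietoris induction on the cardinality of a finite good cover of $X$, with base case a contractible piece where the flux can be gauged away.

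To define the pairing, set
$$\langle\omega,\eta\rangle:=\int_X(\omega\wedge\eta)_n,\qquad
  \omega\in\Omega^{\bar k}(X,\E),\;\;\eta\in\Omega\cpct^{\overline{n-k}}(X,\E^*),$$
where the wedge incorporates the evaluation $\E\otimes\E^*\to\CO$ and $(\cdot)_n$ denotes the $n$-form component. From the Leibniz formula derived in the proof of Lemma~\ref{lemma:cup},
$$d(\omega\wedge\eta)=\nabla^{\E,H}\omega\wedge\eta+(-1)^{\bar k}\,\omega\wedge\nabla^{\E^*,-H}\eta$$
(the $\pm H\wedge$-contributions on the two sides cancel exactly), combined with Stokes' theorem on compactly supported forms, the pairing descends to a well-defined map
$$P_X^{\bar k}\colon H^{\bar k}(X,\E,H)\otimes H\cpct^{\overline{n-k}}(X,\E^*,-H)\to\CO,$$
whose adjoint will be the proposed isomorphism.

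For the inductive step I would choose a finite good cover $\{U_1,\dots,U_N\}$ of $X$, set $U=U_N$ and $V=U_1\cup\cdots\cup U_{N-1}$, and invoke the twisted Mayer-Vietoris short exact sequences of $\ZA_2$-graded complexes
$$0\to\Omega^\bullet(U\cup V,\E,H)\to\Omega^\bullet(U,\E,H)\oplus\Omega^\bullet(V,\E,H)\to\Omega^\bullet(U\cap V,\E,H)\to 0,$$
$$0\to\Omega\cpct^\bullet(U\cap V,\E^*,-H)\to\Omega\cpct^\bullet(U,\E^*,-H)\oplus\Omega\cpct^\bullet(V,\E^*,-H)\to\Omega\cpct^\bullet(U\cup V,\E^*,-H)\to 0$$
(the twisted differentials restrict without trouble since $H$ is global). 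Each yields a cyclic six-term long exact sequence in twisted cohomology, and Stokes on the overlap shows that $P$ intertwines these two into a ladder, commutative up to the standard Mayer-Vietoris signs. By the inductive hypothesis, $P_U$, $P_V$, and $P_{U\cap V}$ are isomorphisms. For the base case $N=1$, $X$ is diffeomorphic to $\RE^n$: the flat bundle $\E$ is canonically trivial, $H=dB$ for some $B\in\Omega^{\bar 0}(X)$, and the isomorphism $\varepsilon_B$ introduced in \S\ref{sect:twistedDR} intertwines $\nabla^{\E,H}$ with $d$ (and simultaneously $\nabla^{\E^*,-H}$ with $d$), reducing $P_X$ to the classical compactly supported Poincar\'e pairing on $\RE^n$.

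The main obstacle is closing the induction via a $\ZA_2$-graded analogue of the Five Lemma: for a morphism of cyclic six-term exact sequences in which the verticals at four of the six positions are isomorphisms, the remaining two must also be isomorphisms. This follows from the usual diagram chase applied to the Mayer-Vietoris ladder, but the sign bookkeeping---in particular the sign that the connecting homomorphism on $(H\cpct^\bullet)^*$ acquires relative to the one on $H^\bullet$ after dualization---requires care. These signs enter exactly as in the untwisted case (cf.~\S I.5 of \cite{BoTu}), and no new phenomenon appears.
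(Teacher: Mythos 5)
Your proposal is correct and follows essentially the same route as the paper: define the pairing by cup product followed by integration, check it descends to cohomology, handle the contractible base case by gauging away $H$, and close a Mayer--Vietoris induction over the finite good cover with the five lemma, exactly as in \S I.5 of Bott--Tu. Your treatment is somewhat more explicit (e.g., in writing out the Leibniz/Stokes computation and flagging the sign issues in the dualized long exact sequence), but the strategy and all the key ingredients coincide with the paper's.
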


\begin{proof}
Since $X$ is orientable, we have $H\cpct^n(X)\cong\RE$.
There are pairings between $\Omega^{\bar k}(X,\E)$ and
$\Omega\cpct^{\overline{n-k}}(X,\E^*)$ and between $H^{\bar k}(X,\E,H)$ and
$H\cpct^{\overline{n-k}}(X,\E^*,-H)$; the former is given by integration on
$X$ and the latter is given by the cup product to $H\cpct^{\bar n}(X)$
followed by a projection to $H\cpct^n(X)$.
Thus we have linear maps
$$\Omega^{\bar k}(X,\E)\to(\Omega\cpct^{\overline{n-k}}(X,\E^*))^*\quad
\mbox{and}\quad H^{\bar k}(X,\E,H)\to(H\cpct^{\overline{n-k}}(X,\E^*,-H))^*.$$
Here $(\Omega\cpct^\bullet(X,\E^*))^*$ is the linear dual of
$\Omega\cpct^\bullet(X,\E^*)$.
To show that the latter is an isomorphism, we observe that it is so if $X$
is contractible, as the two cohomology groups can be calculated explicitly.
For any two open subsets $U,V$ in $X$, we have a morphism of short exact
sequences of $\ZA_2$-graded cochain complexes

$$ \begin{array}{rcccccl}
0\to & \Omega^\bullet(U\cup V,\E) & \to &
\Omega^\bullet(U,\E)\oplus\Omega^\bullet(V,\E)
& \to & \Omega^\bullet(U\cap V,\E) & \to0            \\
& \downarrow & & \downarrow && \downarrow & \\
0\to & \!\!\!\!\!\Omega\cpct^{\bar n-\bullet}(U\cup V,\E^*)^*\!\!\!\!\! 
& \to & \!\!\!\!\!\Omega\cpct^{\bar n-\bullet}(U,\E^*)^*\oplus
\Omega\cpct^{\bar n-\bullet}(V,\E^*)^*\!\!\!\!\! & \to & \!\!\!\!\!
\Omega\cpct^{\bar n-\bullet}(U\cap V,\E^*)^*\!\!\!\!\! & \to0.
\end{array}  $$
This induces a morphism of the Mayer-Vietoris sequences (which are six-term
long exact sequences).
Following \S I.5 of \cite{BoTu}, the result can be proved by an induction on
the number of open sets in the finite good cover and by using the five lemma.
\end{proof}

\begin{proposition}[K\"unneth isomorphism]\label{prop:ku}
For $i=1,2$, let $X_i$ be smooth manifolds.
Suppose $\E_i$ are flat vector bundles over $X_i$ and $H_i$, closed
odd-degree forms on $X_i$, respectively.
Let $\pi_i\colon X_1\times X_2\to X_i$ be the projections.
Set $\E_1\boxtimes\E_2=\pi_1^*\E_1\otimes\pi_2^*\E_2$ and
$H_1\boxplus H_2=\pi_1^*H_1+\pi_2^*H_2$.
If either $X_1$ or $X_2$ has a finite good cover, then, for each $k=0,1$,
there is a natural isomorphism
\[ H^{\bar k}(X_1\times X_2,\E_1\boxtimes\E_2,H_1\boxplus H_2) 
\cong\bigoplus_{l=0,1}H^{\bar l}(X_1,\E_1,H_1)
\otimes H^{\overline{k-l}}(X_2,\E_2,H_2).   \]
\end{proposition}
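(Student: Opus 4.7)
The plan is to imitate the classical proof of the de Rham K\"unneth theorem (see Bott--Tu~\S I.5) in the $\ZA_2$-graded twisted setting: construct an explicit K\"unneth map at the cochain level, verify it in the contractible case, and then induct on the number of open sets in a finite good cover of $X_1$ using a Mayer--Vietoris exact hexagon and the five lemma.

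First I would define the K\"unneth map on cochains by
$$
\kappa(\omega_1\otimes\omega_2):=\pi_1^*\omega_1\wedge\pi_2^*\omega_2\in\Omega^{\overline{k+l}}(X_1\times X_2,\E_1\boxtimes\E_2),
$$
for $\omega_i\in\Omega^\bullet(X_i,\E_i)$. Since $\pi_i^*$ intertwines $\nabla^{\E_i}$ with $\nabla^{\pi_i^*\E_i}$ and carries $H_i$ to $\pi_i^*H_i$, while $\pi_1^*H_1+\pi_2^*H_2=H_1\boxplus H_2$, the graded Leibniz rule of Lemma~\ref{lemma:cup} shows that $\kappa$ is a chain map with respect to the twisted differentials. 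Therefore it descends to a natural map
$$
\mu\colon\bigoplus_{l=0,1}H^{\bar l}(X_1,\E_1,H_1)\otimes H^{\overline{k-l}}(X_2,\E_2,H_2)\to H^{\bar k}(X_1\times X_2,\E_1\boxtimes\E_2,H_1\boxplus H_2),
$$
and the goal is to show that $\mu$ is an isomorphism whenever $X_1$ admits a finite good cover.

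For the base case I would take $X_1=U$ contractible. Then $\E_1|_U$ is trivializable, and since $H_1$ is a closed form of odd degree at least $3$ on a contractible manifold it is exact: $H_1=-dB_1$ for some $B_1\in\Omega^{\bar 0}(U)$. The isomorphism $\varepsilon_{\pi_1^*B_1}$ of \eqref{e^B} identifies the twisted cohomology of $U\times X_2$ with flux $H_1\boxplus H_2$ with that of flux $\pi_2^*H_2$, and Corollary~\ref{cor:hmtp} applied to the homotopy equivalence $\pi_2\colon U\times X_2\to X_2$ (whose pullback of $H_2$ matches exactly) then gives
$$
H^\bullet(U\times X_2,\E_1\boxtimes\E_2,H_1\boxplus H_2)\cong\E_1|_{x_0}\otimes H^\bullet(X_2,\E_2,H_2).
$$
The same reasoning shows $H^\bullet(U,\E_1,H_1)\cong\E_1|_{x_0}$ is concentrated in degree $\bar 0$, so both sides reduce to the same space and an inspection of representatives confirms that $\mu$ is the obvious isomorphism. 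For the inductive step, suppose a finite good cover of $X_1$ has $n+1$ sets and write $X_1=U\cup V$ where $U$ is the last contractible piece and $V$ is covered by the remaining $n$ good sets; then $U\cap V$ is covered by $n$ good sets as well. The short exact sequence of $\ZA_2$-graded complexes appearing in the proof of Proposition~\ref{prop:pd}, equipped now with the twisted differential $\nabla^{\E_1,H_1}$, yields a six-term Mayer--Vietoris exact hexagon for $(X_1,\E_1,H_1)$; tensoring this hexagon with the graded $\RE$-vector space $H^\bullet(X_2,\E_2,H_2)$ preserves exactness. The analogous hexagon for the cover $\{U\times X_2,\,V\times X_2\}$ of $X_1\times X_2$ computes the target of $\mu$. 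Because $\kappa$ is compatible with restriction to open subsets, $\mu$ defines a morphism between the two hexagons that is already an isomorphism on the $U$, $V$ and $U\cap V$ terms by the base case and the inductive hypothesis; the five lemma for cyclic six-term exact sequences then forces $\mu$ to be an isomorphism on the remaining $X_1$ term as well.

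The main obstacle I anticipate is verifying that $\mu$ commutes with the Mayer--Vietoris connecting homomorphism in the twisted $\ZA_2$-graded setting: one must check that the standard partition-of-unity formula for the connecting map remains compatible with $\kappa$ up to sign and is not spoiled by the extra flux term $H_1\boxplus H_2$. Once this naturality is in place the induction proceeds without further difficulty, and by the symmetric role of the two factors the conclusion holds whenever either $X_1$ or $X_2$ has a finite good cover.
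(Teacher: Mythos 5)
Your proof follows essentially the same strategy as the paper's: construct the wedge-product K\"unneth map on cochains, verify it in the contractible case via $\varepsilon_B$ and Corollary~\ref{cor:hmtp}, then induct on the size of a finite good cover of $X_1$ using Mayer--Vietoris and the five lemma, exactly as in \S I.5 of Bott--Tu. The one small technical divergence: the paper builds the morphism of Mayer--Vietoris sequences from a morphism of \emph{short exact sequences of cochain complexes}, with top row $\bigoplus_l\Omega^{\bar l}(\cdot,\E_1)\otimes\Omega^{\bullet-\bar l}(X_2,\E_2)$, which makes compatibility with the connecting homomorphism automatic (at the cost of invoking the algebraic K\"unneth theorem over a field to compute the cohomology of the top row); you instead tensor the six-term cohomology hexagon for $X_1$ with the fixed graded vector space $H^\bullet(X_2,\E_2,H_2)$, which avoids that algebraic step but, as you correctly flag, requires a separate check that $\mu$ commutes with the Mayer--Vietoris connecting map. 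That check does go through (the twisted flux term does not enter the partition-of-unity formula, and the sign is the standard Koszul one), but if you lift the comparison to the chain level as the paper does, the issue disappears entirely, which is why it is the cleaner formulation.

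One minor slip in the base case: you write that $H_1$ has odd degree ``at least $3$,'' but the statement permits a $1$-form component; this is harmless, since the only fact you use is that a positive-degree closed form on a contractible manifold is exact.
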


\begin{proof}
Suppose $X_1$ has a finite good cover.
If $X_1$ is contractible, then $\E_1$ is trivial and $H_1$ is exact on $X_1$.
So are $\pi^*\E_1$ and $\pi_1^*H_1$ on $X_1\times X_2$.
By the isomorphism (\ref{e^B}) and by homotopy invariance
(Corollary~\ref{cor:hmtp}), we get a natural isomorphism
$$ \bigoplus_{l=0,1}H^{\bar k}(X_1,\E_1,H_1)\otimes
    H^{\overline{k-l}}(X_2,\E_2,H_2)\stackrel{\cong}{\longrightarrow}
    H^{\bar k}(X_1\times X_2,\E_1\boxtimes\E_2,H_1\boxplus H_2)   $$
induced by the map $\omega_1\otimes\omega_2\in
\Omega^\bullet(X_1,\E_1)\otimes\Omega^\bullet(X_2,\E_2)\mapsto
\pi_1^*\omega_1\wedge\pi_2^*\omega_2\in\Omega^\bullet(X_1\times X_2,\E_1\boxtimes\E_2)$.
For any two open subsets $U,V$ of $X_1$, there is a morphism of short exact
sequences of $\ZA_2$-graded cochain complexes
{\small
$$ \hspace{-5pt}\begin{array}{rcccccl}
0\to & \!\!\!\!\!\bigoplus\hspace{-15pt}{\displaystyle \mathop{}_{{}_{l=0,1}}}
\Omega^{\bar l}(U\cup V)\otimes\Omega^{\bullet-\bar l}(X_2)\!\!\!\!\! & \to & 
\!\!\!\!\!\bigoplus\hspace{-15pt}{\displaystyle \mathop{}_{{}_{l=0,1}}}
(\Omega^{\bar l}(U)\oplus\Omega^{\bar l}(V))\otimes
\Omega^{\bullet-\bar l}(X_2)\!\!\!\!\! & \to &
\!\!\!\!\!\bigoplus\hspace{-15pt}{\displaystyle \mathop{}_{{}_{l=0,1}}}
\Omega^{\bar l}(U\cap V)\otimes\Omega^{\bullet-\bar l}(X_2)\!\!\!\!\! & \to0 \\
& \downarrow & & \downarrow && \downarrow & \\
0\to & \Omega^\bullet((U\cup V)\times X_2)
& \to & \Omega^\bullet(U\times X_2)\oplus\Omega^\bullet(V\times X_2)
& \to & \Omega^\bullet((U\cap V)\times X_2) & \to0.
\end{array}  $$}\\
Here the obvious dependence on the bundles is suppressed for brevity.
This induces a morphism of the Mayer-Vietoris sequences (which are six-term
long exact sequences).
Following \S I.5 of \cite{BoTu} again, the result can be proved by an
induction on the number of open sets in the finite good cover and by
using the five lemma.
\end{proof}

\section{Analytic torsion of twisted de Rham complexes}

In this section, we define analytic torsion
$\tau(X,\E,H)\in\det H^\bullet(X,\E,H)$ of the twisted de Rham complexes
introduced in \S\ref{sect:twistedDR}.
Since these complexes are only $\ZA_2$-graded, the twisted analytic torsion
is more complicated to define and to study than its classical counterpart.
For simplicity, assume that $X$ is orientable.

\subsection{The construction of analytic torsion}\label{subsect:constr}
To simplify notation, let $C^{\bar k}:=\Omega^{\bar k}(X,\E)$ and let 
$d_{\bar k}=d_{\bar k}^{\E,H}$ be the operator $\nabla^{\E,H}$ acting
on $C^{\bar k}$ ($k=0,1$).
Then $d_{\bar 1}d_{\bar 0}=d_{\bar 0}d_{\bar 1}=0$ and we have a complex
\begin{equation}\label{cplx}
\cdots\stackrel{d_{\bar 1}}{\longrightarrow}C^{\bar 0}
\stackrel{d_{\bar 0}}{\longrightarrow}C^{\bar 1}
\stackrel{d_{\bar 1}}{\longrightarrow}C^{\bar 0}
\stackrel{d_{\bar 0}}{\longrightarrow}\cdots.
\end{equation}
Denote by $d_{\bar k}^\dagger$ the adjoint of $d_{\bar k}$ with respect to
the scalar product of \S\ref{sect:scalar}.
Then the twisted Laplacians
$$
\Delta_{\bar k}=\Delta_{\bar k}^{\E,H}:=d_{\bar k}^\dagger d_{\bar k}
+d_{\overline{k+1}}d_{\overline{k+1}}^\dagger,\quad k=0,1
$$
are elliptic operators and therefore the complex (\ref{cplx}) is elliptic.
By Hodge theory, the natural map 
$\ker(\Delta_{\bar k})\to H^{\bar k}(X,\E,H)$ taking each twisted 
harmonic form to its cohomology class is an isomorphism.
Ellipticity of $\Delta_{\bar k}$ ensures that the twisted Betti numbers 
$b_{\bar k}(X,\E,H)$ ($k=0,1$) are finite.

We establish the relation of Hodge star with adjoint and harmonic form in
the twisted case.

\begin{lemma}\label{lemma:adj}
In the above notations, we have
\[  (d_{\bar k}^{\E,H})^\dagger=(-1)^{k+1}
     \Gamma^{-1}\circ d_{\overline{n+1-k}}^{\E^*,-H}\circ\Gamma,\qquad
     \Gamma\circ\Delta_{\bar k}^{\E,H}=\Delta_{\overline{n-k}}^{\E^*\!,-H}
     \circ\Gamma. \]
\end{lemma}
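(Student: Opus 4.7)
The plan is to prove the adjoint formula by integration by parts adapted to the $\ZA_2$-graded setting, then derive the Laplacian identity by substituting the adjoint formula together with its companion.

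For the adjoint formula, I would take $\omega \in C^{\bar k}$ of pure degree $p \equiv k \pmod 2$ and $\omega' \in C^{\overline{k+1}}$, and expand
\[
  (d^{\E, H}_{\bar k}\omega, \omega') = \int_X \nabla^\E \omega \wedge \Gamma \omega' + \int_X H \wedge \omega \wedge \Gamma \omega'.
\]
The first integral succumbs to the Leibniz rule $d(\omega \wedge \Gamma \omega') = \nabla^\E \omega \wedge \Gamma \omega' + (-1)^p \omega \wedge \nabla^{\E^*}(\Gamma \omega')$ combined with Stokes' theorem (the boundary vanishes because $X$ is closed), giving a sign $(-1)^{p+1} = (-1)^{k+1}$ and shifting the derivative to $\nabla^{\E^*}$ acting on $\Gamma \omega'$. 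For the second, commuting each odd-degree piece $H_q$ past the $p$-form $\omega$ yields $\int_X H \wedge \omega \wedge \Gamma \omega' = (-1)^p \int_X \omega \wedge H \wedge \Gamma \omega' = (-1)^k \int_X \omega \wedge H \wedge \Gamma \omega'$. Factoring $(-1)^{k+1}$ out front turns the flux contribution into $-H \wedge$, so the integrand becomes $(-1)^{k+1}\omega \wedge d^{\E^*, -H}_{\overline{n+1-k}}(\Gamma \omega')$ (since $\Gamma \omega' \in \Omega^{\overline{n+1-k}}(X, \E^*)$); matching against $\int_X \omega \wedge \Gamma(d^{\E, H}_{\bar k})^\dagger \omega'$ then reads off the first identity.

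For the Laplacian identity, I would first derive the companion relation $\Gamma d^{\E, H}_{\bar k} = (-1)^{k+1}(d^{\E^*, -H}_{\overline{n+1-k}})^\dagger \Gamma$ by taking the $\dagger$-adjoint of formula (1), using that $\Gamma$ is an (anti-)isometry of the pre-Hilbert structures, so $\Gamma^\dagger = \Gamma^{-1}$. Substituting both relations, as well as their shifted versions for index $\overline{k+1}$, into $\Delta^{\E, H}_{\bar k} = (d^{\E, H}_{\bar k})^\dagger d^{\E, H}_{\bar k} + d^{\E, H}_{\overline{k+1}}(d^{\E, H}_{\overline{k+1}})^\dagger$, the two signs on each summand multiply to $+1$, yielding
\[
  \Gamma \Delta^{\E, H}_{\bar k} = \bigl[d^{\E^*, -H}_{\overline{n+1-k}}(d^{\E^*, -H}_{\overline{n+1-k}})^\dagger + (d^{\E^*, -H}_{\overline{n-k}})^\dagger d^{\E^*, -H}_{\overline{n-k}}\bigr] \Gamma.
\]
Since $\overline{n-k+1} = \overline{n+1-k}$, the bracket equals $\Delta^{\E^*, -H}_{\overline{n-k}}$, giving the second identity.

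The main delicacy is the sign bookkeeping. The odd degree of $H$ is essential, since it reduces $(-1)^{pq}$ to $(-1)^p$ and hence to a parity-dependent sign. The isometry claim $\Gamma^\dagger = \Gamma^{-1}$ in turn reduces to the computation $\Gamma_* \Gamma|_{\Omega^i(X, \E)} = (-1)^{i(n-i)}$ (where $\Gamma_*$ is the analog of $\Gamma$ for $\E^*$) combined with the form-swap sign $\int_X \alpha \wedge \beta = (-1)^{(\deg \alpha)(\deg \beta)}\int_X \beta \wedge \alpha$, which conspire to make $\Gamma$ an isometry.
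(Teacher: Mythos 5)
Your proof of the first identity follows the paper's route essentially verbatim: expand $(d^{\E,H}_{\bar k}\omega,\omega')$ into the $\nabla^\E$ and $H\wedge$ contributions, integrate by parts via Stokes' theorem for the first, commute the odd-degree flux past $\omega$ for the second, and read off the adjoint; the sign bookkeeping you give is correct and matches the paper's intermediate expression $(-1)^k\int_X\omega\wedge(-\nabla^{\E^*}\Gamma\omega'+H\wedge\Gamma\omega')$.

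For the Laplacian identity you take a genuinely, if mildly, different route. The paper obtains the companion relation $d^{\E,H}_{\bar k}=(-1)^{k+1}\Gamma^{-1}(d^{\E^*\!,-H}_{\overline{n+1-k}})^\dagger\Gamma$ by substituting $(\E^*,-H)$ for $(\E,H)$ in the already-proved first identity and invoking the pointwise algebraic fact $\Gamma^2=(-1)^{k(n-k)}$ on $C^{\bar k}$. You instead take the $\dagger$-adjoint of the first identity, which requires establishing that $\Gamma$ is a conjugate-linear isometry so that $\Gamma^\dagger=\Gamma^{-1}$ and $(\Gamma^{-1}A\Gamma)^\dagger=\Gamma^{-1}A^\dagger\Gamma$. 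Both routes rest on the same Hodge-star algebra (your sketch correctly identifies that the isometry claim reduces to $\Gamma_*\Gamma|_{\Omega^i}=(-1)^{i(n-i)}$ plus the form-swap sign, which is exactly the content of the paper's $\Gamma^2$ identity), so neither is strictly cheaper; the paper's substitution is a touch more elementary in that it avoids spelling out the conjugate-linear adjoint calculus, while your version makes explicit the structural fact that $\Gamma$ intertwines the two pre-Hilbert structures, which is also used implicitly later (e.g.\ for the matching of harmonic spaces and volume elements). The concluding substitution into $\Delta_{\bar k}$ and the identification $\overline{n-k+1}=\overline{n+1-k}$ are correct.
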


\begin{proof}
For $\omega\in C^{\bar k}$, $\omega'\in C^{\overline{k+1}}$, we have
\begin{align}
  (d_{\bar k}^{\E,H}\omega,\omega')
  &=\int_X(\nabla^\E\omega+H\wedge\omega)\wedge\Gamma\omega'
  =(-1)^k\int_X\omega\wedge
    (-\nabla^{\E^*}\Gamma\omega'+H\wedge\Gamma\omega')	          \nno
  &=(-1)^{k+1}(\omega,\Gamma^{-1}\,d_{\overline{n+1-k}}^{\E^*\!,-H}\,
       \Gamma\omega').                                    \nonumber
\end{align}
Therefore $(d_{\bar k}^{\E,H})^\dagger=
(-1)^{k+1}\Gamma^{-1}d_{\overline{n+1-k}}^{\E^*\!,-H}\,\Gamma$.
Replacing $\E$ by $\E^*$, $H$ by $-H$ and using the identity
$\Gamma^2=(-1)^{k(n-k)}$ on $C^{\bar k}$, we get  $d_{\bar k}^{\E,H}=
(-1)^{k+1}\Gamma^{-1}(d_{\overline{n+1-k}}^{\E^*\!,-H})^\dagger\,\Gamma$.
Therefore $\Delta_{\bar k}^{\E,H}=
\Gamma^{-1}\Delta_{\overline{n-k}}^{\E^*\!,-H}\,\Gamma$.
\end{proof}

Consequently, the two twisted Laplacians $\Delta_{\bar k}^{\E,H}$ and
$\Delta_{\overline{n-k}}^{\E^*\!,-H}$ have the same spectrum. 
In particular, if $\omega$ is harmonic with respect to $\Delta^{\E,H}$, then
$\Gamma\omega$ is harmonic with respect to $\Delta^{\E^*\!,-H}$.
This provides another proof of Poincar\'e duality (Proposition~\ref{prop:pd})
when $X$ is compact.

The scalar product on $C^{\bar k}$ restricts to one on the space of
twisted harmonic forms $\ker(\Delta_{\bar k})\cong H^{\bar k}(X,\E,H)$.
Let $\{\nu_{\bar k,i}\}_{i=1}^{b_{\bar k}}$ be an oriented orthonormal
basis of $H^{\bar k}(X,\E,H)$ and let $\eta_{\bar k}=\eta_{\bar k}^{\E,H}
:=\nu_{\bar k,1}\wedge\cdots\wedge\nu_{\bar k,b_{\bar k}}$,
the unit volume element.
Then $\eta_{\bar 0}\otimes\eta_{\bar 1}^{-1}\in\det H^\bullet(X,\E,H)$.
The {\em analytic torsion of the twisted de Rham complex} is defined to be
\begin{equation}\label{def-tor}
\tau(X,\E,H):=(\Det'd_{\bar 0}^\dagger d_{\bar 0})^{1/2}
(\Det'd_{\bar 1}^\dagger d_{\bar 1})^{-1/2}
\eta_{\bar 0}\otimes\eta_{\bar 1}^{-1}\in\det H^\bullet(X,\E,H),
\end{equation}
where $\Det'd_{\bar k}^\dagger d_{\bar k}$ denotes the zeta-function
regularized determinant of $d_{\bar k}^\dagger d_{\bar k}$ on the
orthogonal complement of its kernel.
The next subsection is devoted to showing that these determinants make sense.
When $\E$ is the trivial line bundle over $X$ with the trivial connection,
we set $\tau(X,H)=\tau(X,\E,H)$.

We explain the motivation for definition (\ref{def-tor}) by considering
the case $H=0$.
Then $C^{\bar k}=\bigoplus_{i=k\!\!\!\mod 2}C^i$ and
$d_{\bar k}=\sum_{i=k\!\!\!\mod 2}d_i$ ($k=0,1$), where $C^i=\Omega^i(X,\E)$
($0\le i\le n$) and the differentials $d_i=d_i^\E$ ($0\le i\le n-1$) form
the $\ZA$-graded de Rham complex 
$$
0\to C^0\stackrel{d_0}{\longrightarrow}
C^1\stackrel{d_1}{\longrightarrow}\cdots
\stackrel{d_{n-1}}{\longrightarrow}C^n\to0
$$
with $d_id_{i-1}=0$ ($1\le i\le n$).
By spectral theory, $\Det'd_i^\dagger d_i$ ($0\le i\le n-1$) can be defined
and is equal to $\prod_{j=0}^{n-i-1}(\Det'\Delta_{i-j})^{(-1)^j}$, where
$\Delta_i=d_i^\dagger d_i+d_{i-1}d_{i-1}^\dagger$ (with $d_{-1}=d_n=0$) 
is the Laplacian on $C^i$.
Thus the determinant factor in (\ref{def-tor}) is
$$
(\Det'd_{\bar 0}^\dagger d_{\bar 0})^{1/2}
(\Det'd_{\bar 1}^\dagger d_{\bar 1})^{-1/2}
=\prod_{i=0}^{n-1}(\Det'd_i^\dagger d_i)^{(-1)^i/2}
=\prod_{i=0}^n(\Det'\Delta_i)^{(-1)^{i+1}i/2},
$$
which yields the Ray-Singer torsion $\tau(X,\E)$ \cite{RS}.
When $\E$ is the trivial line bundle over $X$, we set $\tau(X)=\tau(X,\E)$.

We wish to point out that the classical signature or Dirac complex, although
being a $2$-term complex, is not of the form (\ref{cplx}) because it does
not satisfy $d_{\bar 1}d_{\bar 0}=d_{\bar 0}d_{\bar 1}=0$.
Therefore, no torsion is defined in these cases.

If $\E$ is a complex vector bundle, the torsion is only defined up to a phase
due to the ambiguity in the choice of the unit volume elements $\eta_{\bar k}$.
Therefore an equality of torsions means that they are equal up to a phase
or that the volume elements can be chosen so that they are equal. 
More intrinsic is the norm on the determinant line (cf.~\cite{Q,BGS,BZ}
for the $\ZA$-graded case) given by
$$
||\cdot||=(\Det'd_{\bar0}^\dagger d_{\bar0})^{1/2}
(\Det'd_{\bar1}^\dagger d_{\bar1})^{-1/2}\,|\cdot|,
$$
where $|\,\cdot\,|$ is the norm induced by the scalar products on
$\ker(\Delta_{\bar k})\cong H^{\bar k}(X,\E,H)$, $k=0,1$.
However, to facilitate the presentation, we will still regard torsions
as (equivalent classes of) elements in the determinant lines. 
Recently, refined and complex-valued analytic torsions were introduced
as well-defined elements of the determinant line \cite{BK1,BK2,BH}.

\subsection{The zeta-function regularized determinants} 
Given a semi-positive definite self-adjoint operator $A$,
the {\em zeta-function} of $A$ (whenever it is defined) is
$$
\zeta(s,A):=\Tr'A^{-s},
$$
where $\Tr'$ stands for the trace restricted to the subspace orthogonal to
$\ker(A)$.
If $\zeta(s,A)$ can be extended meromorphically in $s$ so that it is
holomorphic at $s=0$, then the {\em zeta-function regularized determinant}
of $A$ is defined as
$$
\Det' A=e^{-\zeta'(0,A)}.
$$
If $A$ is an elliptic differential operator of order $m$ on a compact manifold
of dimension $n$, then $\zeta(s,A)$ is holomorphic when $\Re(s)>n/m$ and can
be extended meromorphically to the entire complex plane with possible simple
poles at $\{\frac{n-l}{m},l=0,1,2,\dots\}$ only \cite{Se} (cf.~\cite{Sh}).
Moreover, the extended function is holomorphic at $s=0$ and
therefore the determinant $\Det'A$ is defined for such an operator.
Examples are the Laplacians $\Delta_i^\E$ acting on $i$-forms on a compact
Riemannian manifold $X$ with values in a vector bundle $\E$ with an Hermitian
structure; their determinants $\Det'\Delta_i^{\E}$ enter the Ray-Singer
analytic torsion for the de Rham complex \cite{RS,RS2}.
For the twisted de Rham complex (\ref{cplx}), the Laplacians 
$\Delta_{\bar k}=\Delta_{\bar k}^{\E,H}$ ($k=0,1$) acting on even/odd-degree
forms are also elliptic, and therefore the determinants $\Det'\Delta_{\bar k}$
($k=0,1$) still make sense (and are in fact equal).
However, what appear in the twisted analytic torsion (\ref{def-tor})
are not these determinants, but $\Det'd_{\bar k}^\dagger d_{\bar k}$,
which are much harder to define.

Let $\spec(A)$ ($\spec'(A)$, respectively) be the set of eigenvalues
(positive eigenvalues, respectively) of $A$.
For any $\lambda\in\spec(A)$, let $m(\lambda,A)$ be its multiplicity.
Then
$$
\zeta(s,A)=\sum_{\lambda\in\spec'(A)}\frac{m(\lambda,A)}{\lambda^s}.
$$
Given a flat vector bundle $\E$ over a manifold $X$ and a closed
odd-degree form $H$ on $X$, set $\spec\I(\Delta_{\bar 0})
:=\spec(\Delta_{\bar 0}|_{\im(d_{\bar 0}^\dagger)})
=\spec'(d_{\bar 0}^\dagger d_{\bar 0})$,
$m\I(\lambda,\Delta_{\bar 0}):=
m(\lambda,\Delta_{\bar 0}|_{\im(d_{\bar 0}^\dagger)})$ and
$\spec\II(\Delta_{\bar 0}):=\spec(\Delta_{\bar 0}|_{\im(d_{\bar 1})})
=\spec'(d_{\bar 1}^\dagger d_{\bar 1})$,
$m\II(\lambda,\Delta_{\bar0}):=m(\lambda,\Delta_{\bar0}|_{\im(d_{\bar1})})$.
Since $\Delta_{\bar0}$ is diagonal with respect to the decomposition
$C^{\bar0}=\im(d_{\bar 0}^\dagger)\oplus\im(d_{\bar1})\oplus
\ker(\Delta_{\bar0})$, we have 
$\spec'(\Delta_{\bar0})=\spec\I(\Delta_{\bar0})\cup\spec\II(\Delta_{\bar0})$
and $m(\lambda,\Delta_{\bar0})=m\I(\lambda,\Delta_{\bar0})
+m\II(\lambda,\Delta_{\bar0})$ if $\lambda>0$.
Therefore
\begin{equation}\label{part-zeta}
\zeta(s,d_{\bar0}^\dagger d_{\bar0})=
\!\!\sum_{\lambda\in\spec\I(\Delta_{\bar0})}\!\!
\frac{m\I(\lambda,\Delta_{\bar0})}{\lambda^s}, \qquad
\zeta(s,d_{\bar1}^\dagger d_{\bar1})=
\!\!\sum_{\lambda\in\spec\II(\Delta_{\bar0})}\!\!
\frac{m\II(\lambda,\Delta_{\bar0})}{\lambda^s}.
\end{equation}
The sum of the two zeta-functions is
\begin{equation}\label{sum-zeta}
\sum_{k=0,1}\zeta(s,d_{\bar k}^\dagger d_{\bar k})=\zeta(s,\Delta_{\bar0})
=\zeta(s,\Delta_{\bar1}).
\end{equation}
However, what we need for (\ref{def-tor}) is their difference.

\begin{theorem}\label{zeta-hol}
For $k=0,1$, $\zeta(s,d_{\bar k}^\dagger d_{\bar k})$ is holomorphic in
the half plane $\Re(s)>n/2$ and extends meromorphically to $\CO$ with
possible simple poles at $\{\frac{n-l}{2},l=0,1,2,\dots\}$ and possible
double poles at negative integers only, and is holomorphic at $s=0$.
\end{theorem}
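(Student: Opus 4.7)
The plan is to realize $\zeta(s,d_{\bar k}^\dagger d_{\bar k})$ as the Mellin transform of a heat trace against a pseudodifferential projection, and then read off the meromorphic structure from the Grubb-Seeley heat-trace expansion. I would first use the twisted Hodge decomposition
$$C^{\bar k}=\overline{\im(d_{\bar k}^\dagger)}\oplus\overline{\im(d_{\overline{k+1}})}\oplus\ker\Delta_{\bar k}.$$
Here $\Delta_{\bar k}$ coincides with $d_{\bar k}^\dagger d_{\bar k}$ on the first summand, while $d_{\bar k}^\dagger d_{\bar k}$ vanishes on the other two (the identity $d_{\overline{k+1}}^\dagger d_{\bar k}^\dagger=0$ is the adjoint of $d_{\bar k}d_{\overline{k+1}}=0$). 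Let $\Pi_{\bar k}$ denote the orthogonal projection of $C^{\bar k}$ onto $\overline{\im(d_{\bar k}^\dagger)}$. Since the twisted complex (\ref{cplx}) is elliptic, $\Pi_{\bar k}$ is a classical pseudodifferential projection of order zero, it commutes with $\Delta_{\bar k}$, and it annihilates $\ker\Delta_{\bar k}$. Weyl's law for $\Delta_{\bar k}$ then gives absolute convergence of the eigenvalue sum defining $\zeta(s,d_{\bar k}^\dagger d_{\bar k})$ in the half-plane $\Re(s)>n/2$, together with the Mellin representation
$$\zeta(s,d_{\bar k}^\dagger d_{\bar k})=\Tr(\Pi_{\bar k}\Delta_{\bar k}^{-s})=\frac{1}{\Gamma(s)}\int_0^\infty t^{s-1}\Tr(\Pi_{\bar k}e^{-t\Delta_{\bar k}})\,dt,$$
the integrand decaying exponentially as $t\to\infty$.

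Next I would invoke the Grubb-Seeley expansion: for a classical order-zero pseudodifferential operator paired with a positive elliptic differential operator of order two on a closed manifold of dimension $n$, one has, as $t\to 0^+$,
$$\Tr(\Pi_{\bar k}e^{-t\Delta_{\bar k}})\sim\sum_{j\ge 0}a_j\,t^{(j-n)/2}+\sum_{l\ge 0}(b_l\log t+c_l)\,t^l,$$
the $\log$-terms being the feature characteristic of pseudodifferential (rather than purely differential) data. Splitting the Mellin integral at $t=1$ makes the tail entire in $s$, and the contribution from the expansion is analyzed term by term: the $a_jt^{(j-n)/2}$ piece contributes a simple pole at $s=(n-j)/2$, absorbed by the simple zero of $1/\Gamma(s)$ whenever $(n-j)/2$ is a non-positive integer, and leaving a list of possible simple poles at $\{(n-l)/2\colon l=0,1,2,\ldots\}$; the $t^l\log t$ contribution yields a double pole at $s=-l$ which, together with the possible pole coming from $a_{n+2l}t^l$, is reduced by the simple zero of $1/\Gamma(s)$ to at most a double pole at each negative integer; the $c_lt^l$ piece contributes no further singularities.

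The main obstacle is the holomorphy at $s=0$. The residue there is proportional to $b_0$, the coefficient of $\log t$ in the heat trace at $t^0$, which in turn equals, up to a fixed factor, the Wodzicki residue $\res(\Pi_{\bar k})$. The key input is that this residue vanishes: the total symbol of $\Pi_{\bar k}$ can be built locally from the symbols of $d_{\bar k}$ and its adjoint together with the parametrix of $\Delta_{\bar k}$, and a direct symbol computation shows that the residue density integrates to zero, in line with Wodzicki's general result on residues of orthogonal pseudodifferential projections onto the image of a differential operator. Combined with the $\Gamma(s)$-regularization this gives $\zeta(s,d_{\bar k}^\dagger d_{\bar k})$ holomorphic at $s=0$. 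An alternative route is to use the identity (\ref{sum-zeta}), whose right-hand side is the zeta of the elliptic differential operator $\Delta_{\bar 0}$ and is therefore holomorphic at $s=0$, in conjunction with the Hodge-star duality of Lemma~\ref{lemma:adj} to match the potential residues of the two summands; the delicate point throughout is the treatment of the pseudodifferential projection $\Pi_{\bar k}$ and the vanishing of its residue trace.
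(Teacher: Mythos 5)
Your proposal follows essentially the same route as the paper: identify $\zeta(s,d_{\bar k}^\dagger d_{\bar k})=\Tr(\Pi_{\bar k}\Delta_{\bar k}^{-s})$ with $\Pi_{\bar k}$ a classical pseudodifferential projection of order zero, invoke the Grubb--Seeley machinery for the meromorphic continuation and pole structure, and conclude regularity at $s=0$ from the vanishing of the Wodzicki residue of a pseudodifferential projection. The paper is slightly more explicit about why $\Pi_{\bar k}$ is pseudodifferential (writing $P_{\bar k}=d_{\bar k}^\dagger(\Delta_{\overline{k+1}})^{-1}d_{\bar k}$), while you spell out the heat-trace expansion in more detail, but the argument and the key lemma are the same.
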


\begin{proof}
Let $P_{\bar k}$ ($k=0,1$) be the orthogonal projection onto the closure
of the subspace $\im(d_{\bar k}^\dagger)$.
As $d_{\bar k}d_{\bar k}^\dagger$ and $\Delta_{\overline{k+1}}$ are
equal and invertible on (the closure of) $\im(d_{\bar k})$, we have 
$$
P_{\bar k}=d_{\bar k}^\dagger(d_{\bar k}d_{\bar k}^\dagger)^{-1}d_{\bar k}
=d_{\bar k}^\dagger(\Delta_{\overline{k+1}})^{-1}d_{\bar k},
$$
which implies that $P_{\bar k}$ is a pseudodifferential operator of order $0$.
Moreover,
$$
\zeta(s,d_{\bar k}^\dagger d_{\bar k})=\Tr(P_{\bar k}\Delta_{\bar k}^{-s}).
$$
By general theory \cite{GrSe,Gr06}, $\zeta(s,d_{\bar k}^\dagger d_{\bar k})$
is holomorphic in the half plane $\Re(s)>n/2$ and extends meromorphically
to $\CO$ with possible simple poles at $\{\frac{n-l}{2},l=0,1,2,\dots\}$ and
possible double poles at negative integers only.
The Laurent series of $\zeta(s,d_{\bar k}^\dagger d_{\bar k})$ at $s=0$ is
$$
\Tr(P_{\bar k}\Delta_{\bar k}^{-s})=
\frac{c_{-1}(P_{\bar k},\Delta_{\bar k})}{s}+c_0(P_{\bar k},\Delta_{\bar k})
+\sum_{l=1}^\infty c_l(P_{\bar k},\Delta_{\bar k})\,s^l.
$$
Here $c_{-1}(P_{\bar k},\Delta_{\bar k})=\frac{1}{2}\res(P_{\bar k})$,
where $\res(P_{\bar k})$ is known as the non-commutative residue or
the Guillemin-Wodzicki residue trace of $P_{\bar k}$ \cite{Wo,Gui}.
Since $P_{\bar k}$ is a projection, $\res(P_{\bar k})=0$
\cite{Wo,BrLe,Gr03}.
Therefore $\zeta(s,d_{\bar k}^\dagger d_{\bar k})$ is regular at $s=0$.
\end{proof}

Theorem~\ref{zeta-hol} justifies the definition of the twisted analytic
torsion in (\ref{def-tor}).
The constant term 
$\zeta(0,d_{\bar k}^\dagger d_{\bar k})=c_0(P_{\bar k},\Delta_{\bar k})$
of the above Laurent series is related to the Kontsevich-Vishik trace
\cite{KV,Gr06}.
It can nevertheless be studied by standard heat kernel techniques
(Lemma~\ref{sum-zeta-b}, Corollaries~\ref{var-zeta0} and \ref{var-zeta0-H}
below).
Recall the notion of twisted Betti numbers $b_{\bar k}(X,\E,H)$ ($k=0,1$)
from \S\ref{sect:twistedDR} and the fact that the zeta-function is related
to the heat kernel by a Mellin transform
$$
\zeta(s,A)=\frac{1}{\Gamma(s)}\int_0^\infty t^{s-1}\Tr'e^{-tA}\,dt.
$$

\begin{lemma}\label{sum-zeta-b}
If $\dim X$ is odd, then
$$
\sum_{k=0,1}\zeta(0,d_{\bar k}^\dagger d_{\bar k})=-b_{\bar0}(X,\E,H)=
-b_{\bar1}(X,\E,H).
$$
\end{lemma}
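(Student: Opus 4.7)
The plan is to reduce the identity to the zeta value of the honest elliptic Laplacian $\Delta_{\bar k}$, and then to compute that zeta value by a standard heat-kernel Mellin-transform argument. By equation (\ref{sum-zeta}),
$$\sum_{k=0,1}\zeta(s,d_{\bar k}^\dagger d_{\bar k})=\zeta(s,\Delta_{\bar0})=\zeta(s,\Delta_{\bar1}),$$
so it suffices to establish $\zeta(0,\Delta_{\bar k})=-b_{\bar k}(X,\E,H)$ for $k=0,1$. This is cleaner than dealing with $\Det'd_{\bar k}^\dagger d_{\bar k}$ directly because $\Delta_{\bar k}$ is a bona fide second-order elliptic differential operator, not a pseudodifferential composition as in Theorem \ref{zeta-hol}.

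The twisting of the differential by $H\wedge\,\cdot\,$ is a zeroth-order perturbation, so $\Delta_{\bar k}^{\E,H}$ has the same principal symbol as the ordinary de Rham Laplacian on $\Omega^{\bar k}(X,\E)$. Hence it is a generalized Laplacian and its heat trace admits the standard small-$t$ asymptotic expansion
$$\Tr\,e^{-t\Delta_{\bar k}}\sim\sum_{l=0}^\infty a_l^{(k)}\,t^{\,l-n/2}\qquad(t\to0^+).$$
When $n=\dim X$ is odd, every exponent $l-n/2$ is a half-integer; in particular the expansion has no $t^0$ term.

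Next I would invoke the Mellin transform
$$\Gamma(s)\,\zeta(s,\Delta_{\bar k})=\int_0^\infty t^{s-1}\bigl(\Tr\,e^{-t\Delta_{\bar k}}-\dim\ker\Delta_{\bar k}\bigr)\,dt,$$
split at $t=1$, and analyze the poles. The tail $\int_1^\infty$ is entire in $s$. The small-$t$ piece, expanded to sufficient order, contributes simple poles $a_l^{(k)}/(s+l-n/2)$ together with $-(\dim\ker\Delta_{\bar k})/s$ from the subtraction. For odd $n$ no value of $l-n/2$ meets $0$, so the bracket has only the $-\dim\ker\Delta_{\bar k}/s$ pole at $s=0$. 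Multiplying by $1/\Gamma(s)=s+O(s^2)$ and evaluating at $s=0$ gives
$$\zeta(0,\Delta_{\bar k})=-\dim\ker\Delta_{\bar k}=-b_{\bar k}(X,\E,H),$$
using the Hodge isomorphism $\ker\Delta_{\bar k}\cong H^{\bar k}(X,\E,H)$.

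Plugging back into (\ref{sum-zeta}) yields the claimed identity, and as a by-product one recovers $b_{\bar0}=b_{\bar1}$, consistent with $\chi(X,\E,H)=0$ for odd-dimensional $X$. There is no serious obstacle: the only points to verify carefully are that the $H\wedge\,\cdot\,$ perturbation does not alter the leading symbol (so the Gilkey-type heat expansion applies verbatim), and that for odd $n$ the $t^0$ coefficient in the expansion is forced to vanish by parity of exponents rather than by any cancellation between even and odd forms.
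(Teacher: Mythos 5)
Your proof is correct and follows essentially the same route as the paper: reduce via equation (\ref{sum-zeta}) to $\zeta(0,\Delta_{\bar k})$, expand the heat trace, pass through the Mellin transform, and observe that odd $n$ forces all half-integer exponents away from $0$, leaving only the $-b_{\bar k}/s$ pole to survive against the zero of $1/\Gamma(s)$. The added remark that $H\wedge\,\cdot\,$ is a zeroth-order perturbation preserving the principal symbol is a correct and useful justification for applying the standard heat-kernel expansion, though the paper takes this for granted.
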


\begin{proof}
When $n=\dim X$ is odd, $b_{\bar0}(X,\E,H)=b_{\bar1}(X,\E,H)$ as
$\chi(X,\E,H)=0$.
By (\ref{sum-zeta}), it suffices to show that 
$\zeta(0,\Delta_{\bar k})=-b_{\bar k}(X,\E,H)$, $k=0,1$.
By the asymptotic expansion of the heat kernel (cf.~\cite{Gil,BGV}),
$$
\Tr e^{-t\Delta_{\bar k}}\sim\sum_{l=0}^\infty c_{\bar k,l}t^{-n/2+l}
$$
as $t\downarrow 0$, where $c_{\bar k,l}\in\RE$.
We have, for $\Re(s)>n/2$,
\begin{align}
\zeta(s,\Delta_{\bar k})&=\frac{1}{\Gamma(s)}
 \int_0^\infty t^{s-1}(\Tr e^{-t\Delta_{\bar k}}-b_{\bar k})\,dt      \nno
&=\frac{1}{\Gamma(s)}\Big(-\frac{b_{\bar k}}{s}+\sum_{l=0}^N
 \frac{c_{\bar k,l}}{s-n/2+l}\,+R_N(s)\Big),                     \nonumber
\end{align}
where $N$ is a sufficiently large integer.
Here $R_N(s)$ is holomorphic when $\Re(s)>n/2-N$. 
Since $n$ is odd and since the Gamma function $\Gamma(s)$ has a simple pole
at $s=0$, the result follows.
\end{proof}

\section{Twisted analytic torsion under metric and flux deformations}
\label{sect:var}

\subsection{Variation of analytic torsion with respect to the metrics}
We assume that $X$ is a compact oriented manifold of odd dimension.
Let $g_X$ be a Riemannian metric on $X$ and $g_\E$, an Hermitian metric 
on $\E$.
Let $Q_{\bar k}$ ($k=0,1$) be the orthogonal projection from (the completion
of) $C^{\bar k}$ to $\ker(\Delta_{\bar k})$. 
Suppose that the pair $(g_X,g_\E)$ is deformed smoothly along a one-parameter
family with parameter $u\in\RE$, then the operators $\ast$, $\sharp$ and 
$\Gamma=\ast\sharp=\sharp\ast$ (see \$\ref{sect:scalar}) all depend smoothly
on $u$.
Let 
$$
\alpha=\Gamma^{-1}\frac{\partial\Gamma}{\partial u}.
$$
We show the invariance of the analytic torsion (\ref{def-tor}) by showing 
in the next two lemmas that the variation of the regularized determinants
cancels that of the volume elements.

\begin{lemma}\label{lem:var-det}
Under the above assumptions,
$$
\frac{\partial}{\partial u}\log[\Det'd_{\bar 0}^\dagger d_{\bar 0}\,
(\Det'd_{\bar 1}^\dagger d_{\bar 1})^{-1}]
=\sum_{k=0,1}(-1)^k\Tr(\alpha Q_{\bar k}).
$$
\end{lemma}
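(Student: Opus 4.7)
The strategy is to adapt the Ray--Singer variational argument to the $\mathbb{Z}_2$-graded setting, using the heat-kernel representation
$$
\zeta(s, d_{\bar k}^\dagger d_{\bar k}) = \frac{1}{\Gamma(s)}\int_0^\infty t^{s-1}\,\Tr\bigl(P_{\bar k}\,e^{-t\Delta_{\bar k}}\bigr)\,dt
$$
from the proof of Theorem~\ref{zeta-hol}. Denote by $P'_{\bar k}$ the orthogonal projection onto $\overline{\im d_{\overline{k+1}}}\subset C^{\bar k}$, so that $I = P_{\bar k} + P'_{\bar k} + Q_{\bar k}$ on $C^{\bar k}$.

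The first ingredient is the commutator formula $\partial_u d_{\bar k}^\dagger = [d_{\bar k}^\dagger,\alpha]$, obtained by differentiating the expression in Lemma~\ref{lemma:adj} and using $\alpha = \Gamma^{-1}\partial_u\Gamma$; this yields
$$
\partial_u\Delta_{\bar k} = [d_{\bar k}^\dagger,\alpha]\,d_{\bar k} + d_{\overline{k+1}}\,[d_{\overline{k+1}}^\dagger,\alpha].
$$
Duhamel's formula combined with $[P_{\bar k},\Delta_{\bar k}]=0$, cyclicity of the trace, the algebraic identities $P_{\bar k}d_{\bar k}^\dagger = d_{\bar k}^\dagger$, $P_{\bar k}d_{\overline{k+1}}=0$, $d_{\bar k}^\dagger d_{\bar k} = P_{\bar k}\Delta_{\bar k}$, and the intertwiner $d_{\bar k}\,e^{-t\Delta_{\bar k}} = e^{-t\Delta_{\overline{k+1}}}\,d_{\bar k}$, then recasts $\partial_u\Tr(P_{\bar k}\,e^{-t\Delta_{\bar k}})$ as a total $t$-derivative plus the contribution $\Tr((\partial_u P_{\bar k})e^{-t\Delta_{\bar k}})$. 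Via $\partial_u P_{\bar k} = -\partial_u P'_{\bar k} - \partial_u Q_{\bar k}$ together with the projection identity $Q_{\bar k}(\partial_u Q_{\bar k})Q_{\bar k}=0$ and $Q_{\bar k}\,e^{-t\Delta_{\bar k}} = Q_{\bar k}$, the latter reduces to the $\partial_u P'_{\bar k}$ piece only.

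After taking the Mellin transform and integrating by parts in $t$, the total $t$-derivative contributes $-s$ times a regular integral; multiplied by $1/\Gamma(s)$ and evaluated at $s=0$, this produces at most the boundary values. Exponential decay of $e^{-t\Delta_{\bar k}}-Q_{\bar k}$ as $t\to\infty$ isolates the harmonic projections; oddness of $n=\dim X$ together with the heat-kernel asymptotic expansion and the vanishing of the noncommutative residue $\res(P_{\bar k})=0$ (invoked in Theorem~\ref{zeta-hol}) control the $t\to 0^+$ boundary.

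The hardest part is to handle the $\partial_u P'_{\bar k}$ term and the $t\to 0^+$ pseudodifferential (Kontsevich--Vishik–type) contributions for each $k$ separately, since $P_{\bar k}$ and $P'_{\bar k}$ are non-local. The key observation is that upon forming the alternating sum $\sum_k(-1)^k$, the isospectrality identity $\Tr(P_{\bar k}e^{-t\Delta_{\bar k}})=\Tr(P'_{\overline{k+1}}e^{-t\Delta_{\overline{k+1}}})$ (reflecting that $d_{\bar k}^\dagger d_{\bar k}$ and $d_{\bar k}d_{\bar k}^\dagger$ have the same non-zero spectrum) forces these non-local pieces to cancel between $k=0$ and $k=1$. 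The remaining $t\to\infty$ boundary values combine, through the Hodge decomposition $P_{\bar k}-P'_{\bar k} = I - 2P'_{\bar k} - Q_{\bar k}$, to yield precisely $\sum_{k=0,1}(-1)^k\Tr(\alpha Q_{\bar k})$, completing the identity.
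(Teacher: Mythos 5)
Your overall framework---Mellin transform of $\Tr(P_{\bar k}e^{-t\Delta_{\bar k}})$, Duhamel's formula for the $u$-derivative of the heat operator, and control of the $s\to 0$ limit using oddness of $n$ and vanishing of $\res(P_{\bar k})$---is the right one and is essentially what the paper does. But your treatment of the $\Tr\bigl((\partial_u P_{\bar k})e^{-t\Delta_{\bar k}}\bigr)$ term has a real gap. You decompose $\partial_u P_{\bar k}=-\partial_u P'_{\bar k}-\partial_u Q_{\bar k}$, correctly dispose of the $Q_{\bar k}$ piece, and are then left with $\Tr\bigl((\partial_u P'_{\bar k})e^{-t\Delta_{\bar k}}\bigr)$, which you claim ``cancels between $k=0$ and $k=1$'' via the isospectrality $\Tr(P_{\bar k}e^{-t\Delta_{\bar k}})=\Tr(P'_{\overline{k+1}}e^{-t\Delta_{\overline{k+1}}})$. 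Differentiating that identity in $u$ relates $\Tr((\partial_u P_{\bar k})e^{-t\Delta_{\bar k}})$ \emph{plus} a Duhamel term to the analogous quantities for $P'_{\overline{k+1}}$ on the other parity, but it does not by itself give that the alternating sum of the $\partial_u P'_{\bar k}$ traces vanishes; you are asserting the cancellation rather than proving it.

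The simple fact you are missing is that the whole $\partial_u P_{\bar k}$ term vanishes for each $k$ \emph{separately}, before any alternating sum. Since $d_{\bar k}$ is $u$-independent, so is $\ker d_{\bar k}=\ker P_{\bar k}$, hence $(\partial_u P_{\bar k})(1-P_{\bar k})=0$, i.e.\ $\partial_u P_{\bar k}=(\partial_u P_{\bar k})P_{\bar k}$. Combined with the differentiated idempotency $(\partial_u P_{\bar k})P_{\bar k}+P_{\bar k}(\partial_u P_{\bar k})=\partial_u P_{\bar k}$, this forces $P_{\bar k}(\partial_u P_{\bar k})=0$; since $P_{\bar k}$ commutes with $e^{-t\Delta_{\bar k}}$, cyclicity gives $\Tr\bigl((\partial_u P_{\bar k})e^{-t\Delta_{\bar k}}\bigr)=\Tr\bigl(e^{-t\Delta_{\bar k}}P_{\bar k}(\partial_u P_{\bar k})\bigr)=0$, and the identical argument applies to $P'_{\bar k}$. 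So the non-local terms you worry about never enter, no Kontsevich--Vishik control is needed for them, and there is no isospectral cancellation to invoke. Relatedly, the final $\sum_k(-1)^k\Tr(\alpha Q_{\bar k})$ does not come from ``$t\to\infty$ boundary values through $P_{\bar k}-P'_{\bar k}=I-2P'_{\bar k}-Q_{\bar k}$''; it comes from the Mellin transform of the total $t$-derivative: after writing $\Tr(\alpha e^{-t\Delta_{\bar k}}\Delta_{\bar k})=-\partial_t\Tr\bigl(\alpha(e^{-t\Delta_{\bar k}}-Q_{\bar k})\bigr)$ (subtracting $Q_{\bar k}$ for large-$t$ decay) and integrating by parts, one gets $s\int_0^\infty t^{s-1}\Tr\bigl(\alpha(e^{-t\Delta_{\bar k}}-Q_{\bar k})\bigr)\,dt$, and the piece $-s\int_0^1 t^{s-1}\Tr(\alpha Q_{\bar k})\,dt=-\Tr(\alpha Q_{\bar k})$ is what survives at $s=0$, while the other pieces vanish because $\Tr(\alpha e^{-t\Delta_{\bar k}})$ has no constant term in its small-$t$ expansion when $n$ is odd and the $[1,\infty)$ integral is entire.
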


\begin{proof}
While $d_{\bar k}$ is independent of $u$, we have
$$
\frac{\partial d_{\bar k}^\dagger}{\partial u}=-[\alpha,d_{\bar k}^\dagger],
$$
which follows easily from Lemma~\ref{lemma:adj}.
Using $P_{\bar k}d_{\bar k}^\dagger=d_{\bar k}^\dagger$, 
$d_{\bar k}P_{\bar k}=d_{\bar k}$ and $P_{\bar k}^2=P_{\bar k}$,
we get $d_{\bar k}^\dagger d_{\bar k}P_{\bar k}=P_{\bar k}
d_{\bar k}^\dagger d_{\bar k}=d_{\bar k}^\dagger d_{\bar k}$ and
$$
\frac{\partial P_{\bar k}}{\partial u}=
\frac{\partial P_{\bar k}}{\partial u}P_{\bar k},
\qquad P_{\bar k}\frac{\partial P_{\bar k}}{\partial u}=0.
$$
Following the $\ZA$-graded case \cite{RS,RS2}, we set
\begin{align}
f(s,u)&=\sum_{k=0,1}(-1)^k\int_0^\infty t^{s-1}
 \Tr(e^{-td_{\bar k}^\dagger d_{\bar k}}P_{\bar k})\,dt         \nno
&=\Gamma(s)\sum_{k=0,1}(-1)^k\zeta(s,d_{\bar k}^\dagger d_{\bar k}). \nonumber
\end{align}
Using the above identities on $P_{\bar k}$, the trace property and by an
application of Duhamel's principle, we get
\begin{align}
\frac{\partial f}{\partial u} 
&=\sum_{k=0,1}(-1)^k\int_0^\infty\!t^{s-1}\Tr\left(
  t[\alpha,d_{\bar k}^\dagger]d_{\bar k}e^{-td_{\bar k}^\dagger d_{\bar k}}
  +e^{-t d_{\bar k}^\dagger d_{\bar k}}
  \frac{\partial P_{\bar k}}{\partial u}P_{\bar k}\right)\,dt \nno
&=\sum_{k=0,1}(-1)^k\int_0^\infty\!t^{s-1}\Tr\left(t\alpha[d_{\bar
      k}^\dagger,
  d_{\bar k}e^{-td_{\bar k}^\dagger d_{\bar k}}]
  +P_{\bar k}e^{-td_{\bar k}^\dagger d_{\bar k}}
  \frac{\partial P_{\bar k}}{\partial u}\right)dt \nno
&=\sum_{k=0,1}(-1)^k\int_0^\infty\!t^{s-1}\Tr\left(
  t\alpha(e^{-td_{\bar k}^\dagger d_{\bar k}}d_{\bar k}^\dagger d_{\bar k}
  -e^{-td_{\bar k}d_{\bar k}^\dagger}d_{\bar k}d_{\bar k}^\dagger)
  +e^{-td_{\bar k}^\dagger d_{\bar k}}P_{\bar k}
  \frac{\partial P_{\bar k}}{\partial u}\right)dt \nno
&=\sum_{k=0,1}(-1)^k\int_0^\infty\!t^s
  \Tr(\alpha e^{-t\Delta_{\bar k}}\Delta_{\bar k})\,dt \nno
&=-\sum_{k=0,1}(-1)^k\int_0^\infty\!t^s\frac{\partial}{\partial t}
  \Tr\big(\alpha(e^{-t\Delta_{\bar k}}-Q_{\bar k})\big)\,dt.   \nonumber
\end{align}
Integrating by parts, we have
\begin{align}
\frac{\partial f}{\partial u}
&=s\sum_{k=0,1}(-1)^k\int_0^\infty\!t^{s-1}
 \Tr\big(\alpha(e^{-t\Delta_{\bar k}}-Q_{\bar k})\big)\,dt \nno
&=s\sum_{k=0,1}(-1)^k\left(\int_0^1+\int_1^\infty\right)t^{s-1}
 \Tr\big(\alpha(e^{-t\Delta_{\bar k}}-Q_{\bar k})\big)\,dt. \nonumber
\end{align}
Since $\alpha$ is a smooth tensor and $n$ is odd, the asymptotic expansion
as $t\downarrow 0$ for $\Tr(\alpha e^{-t\Delta_{\bar k}})$ does not contain
a constant term (see for example \cite{Gil}, Lemma~1.7.4).
Therefore $\int_0^1 t^{s-1}\Tr(\alpha e^{-t\Delta_{\bar k}})\,dt$
does not have a pole at $s=0$.
On the other hand, because of the exponential decay of 
$\Tr(\alpha(e^{-t\Delta_{\bar k}}-Q_{\bar k}))$ for large $t$, the function
$\int_1^\infty t^{s-1}\Tr(\alpha(e^{-t\Delta_{\bar k}}-Q_{\bar k}))\,dt$
is entire in $s$.
So
\begin{equation}\label{var-f}
\frac{\partial f}{\partial u}\Big|_{s=0}=-s\sum_{k=0,1}(-1)^k
  \int_0^1 t^{s-1}\Tr(\alpha Q_{\bar k})\,dt\Big|_{s=0}         
=-\sum_{k=0,1}(-1)^k\Tr(\alpha Q_{\bar k})            
\end{equation}
and hence
\begin{equation}\label{var-diffzeta0}
\frac{\partial}{\partial u}\sum_{k=0,1}(-1)^k
\zeta(0,d_{\bar k}^\dagger d_{\bar k})=0.
\end{equation}
Finally, the result follows from (\ref{var-f}), (\ref{var-diffzeta0}) and
$$
\log[\Det'd_{\bar 0}^\dagger d_{\bar 0}\,(\Det'd_{\bar 1}^\dagger
d_{\bar 1})^{-1}]=-\lim_{s\to0}\Big[f(s,u)-\frac{1}{s}\sum_{k=0,1}(-1)^k
\zeta(0,d_{\bar k}^\dagger d_{\bar k})\Big].
$$

\vspace{-10pt}
\end{proof}

\begin{corollary}\label{var-zeta0}
Under the above deformation, each $\zeta(0,d_{\bar k}^\dagger d_{\bar k})$
($k=0,1$) is invariant.
\end{corollary}

\begin{proof}
By (\ref{var-diffzeta0}), their difference is invariant.
By Lemma~\ref{sum-zeta-b}, their sum is also invariant since 
$b_{\bar0}(X,\E,H)$ is defined without using the metrics.
\end{proof}

\begin{lemma}\label{lem:var-vol}
Under the same assumptions, along any one-parameter deformation of
$(g_X,g_\E)$, the volume elements $\eta_{\bar0}$, $\eta_{\bar1}$
can be chosen so that 
$$
\frac{\partial}{\partial u}(\eta_{\bar0}\otimes\eta_{\bar1}^{-1})=-\frac{1}{2}
\sum_{k=0,1}(-1)^k\Tr(\alpha Q_{\bar k})\,\eta_{\bar0}\otimes\eta_{\bar1}^{-1}.
$$
\end{lemma}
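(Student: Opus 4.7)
The plan is to pick a family of harmonic bases that tracks a \emph{fixed} cohomology basis, and then read off the variation of $\eta_{\bar k}$ from the variation of the Gram matrix. Fix a basis $\{[\tilde\nu_{\bar k,i}]\}_{i=1}^{b_{\bar k}}$ of $H^{\bar k}(X,\E,H)$ independent of $u$, represented by closed forms $\tilde\nu_{\bar k,i}\in C^{\bar k}$, and set
$$\nu_{\bar k,i}(u):=Q_{\bar k}(u)\,\tilde\nu_{\bar k,i}\in\ker(\Delta_{\bar k}(u)).$$
As elements of $\det H^\bullet(X,\E,H)$, the wedge $\nu_{\bar k,1}(u)\wedge\cdots\wedge\nu_{\bar k,b_{\bar k}}(u)$ equals the $u$-independent element $\tilde\eta_{\bar k}:=[\tilde\nu_{\bar k,1}]\wedge\cdots\wedge[\tilde\nu_{\bar k,b_{\bar k}}]$. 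Consequently, letting $G^{\bar k}_{ij}(u)=(\nu_{\bar k,i}(u),\nu_{\bar k,j}(u))$, the unit orthonormal volume element can be taken to be
$$\eta_{\bar k}(u)=(\det G^{\bar k}(u))^{-1/2}\,\tilde\eta_{\bar k},$$
so that $\partial_u\eta_{\bar k}(u)=-\tfrac{1}{2}\,\partial_u\log\det G^{\bar k}(u)\,\eta_{\bar k}(u)$.

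The heart of the argument is to compute $\partial_u G^{\bar k}_{ij}$. Since $(\omega,\omega')=\int_X\omega\wedge\Gamma\omega'$, differentiation gives three terms:
$$\frac{\partial G^{\bar k}_{ij}}{\partial u}=\Bigl(\frac{\partial\nu_{\bar k,i}}{\partial u},\nu_{\bar k,j}\Bigr)+\int_X\nu_{\bar k,i}\wedge\frac{\partial\Gamma}{\partial u}\nu_{\bar k,j}+\Bigl(\nu_{\bar k,i},\frac{\partial\nu_{\bar k,j}}{\partial u}\Bigr).$$
The key observation is that $\partial_u\nu_{\bar k,i}$ is $d^{\E,H}$-exact: indeed $\nu_{\bar k,i}(u)-\tilde\nu_{\bar k,i}$ lies in $\im(\nabla^{\E,H})$ for every $u$, so its $u$-derivative does as well. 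But an exact form is orthogonal to harmonic forms (as $(d_{\overline{k+1}}\mu,\omega)=(\mu,d_{\overline{k+1}}^\dagger\omega)=0$ for $\omega\in\ker\Delta_{\bar k}$), so the first and third terms vanish. Using $\partial_u\Gamma=\Gamma\alpha$, the middle term is exactly $(\nu_{\bar k,i},\alpha\nu_{\bar k,j})$.

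Specializing to a $u_0$ at which $\{\nu_{\bar k,i}(u_0)\}$ is orthonormal, we get $\partial_u\log\det G^{\bar k}|_{u_0}=\Tr(\partial_u G^{\bar k})|_{u_0}=\sum_i(\nu_{\bar k,i},\alpha\nu_{\bar k,i})=\Tr(\alpha Q_{\bar k})|_{u_0}$, since $\{\nu_{\bar k,i}\}$ is an orthonormal basis of $\im(Q_{\bar k})$ and $Q_{\bar k}$ annihilates its orthogonal complement. Hence
$$\frac{\partial\eta_{\bar k}}{\partial u}=-\tfrac{1}{2}\Tr(\alpha Q_{\bar k})\,\eta_{\bar k}.$$
Since $\partial_u\eta_{\bar 1}^{-1}=+\tfrac{1}{2}\Tr(\alpha Q_{\bar 1})\eta_{\bar 1}^{-1}$ by the Leibniz rule for the dual, combining the two yields the asserted formula. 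Because both sides vary continuously with $u_0$, the identity holds for all $u$.

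The main technical point to be careful about is the exactness of $\partial_u\nu_{\bar k,i}$; once this is observed, the remaining computation is a routine variation of the inner product. One should also verify that the instantaneous orthonormalization does not affect the coordinate-free statement, but this is automatic since both sides are tensorial in the basis.
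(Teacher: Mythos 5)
Your proposal is correct. It reaches the same conclusion as the paper but with a cleaner bookkeeping device: the paper evolves an orthonormal family $\nu_{\bar k,i}(u)$ directly, differentiates the constraint $(\nu_{\bar k,i},\nu_{\bar k,j})=\delta_{ij}$ to get $\Re(\partial_u\nu_{\bar k,i},\nu_{\bar k,i})=-\tfrac12(\nu_{\bar k,i},\alpha\nu_{\bar k,i})$, and then invokes a phase adjustment to make the inner product real; you instead fix a $u$-independent basis of $H^{\bar k}(X,\E,H)$, track its harmonic representatives $Q_{\bar k}(u)\tilde\nu_{\bar k,i}$, and read the variation off the Gram determinant. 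Your framing has two genuine advantages. First, the observation that $\partial_u\nu_{\bar k,i}$ is $\nabla^{\E,H}$-exact (because $\nu_{\bar k,i}(u)-\tilde\nu_{\bar k,i}$ lies in the $u$-independent space of exact forms) lets you kill the cross terms $(\partial_u\nu_{\bar k,i},\nu_{\bar k,j})$ outright, with no phase adjustment needed. Second, the identification of $\eta_{\bar k}(u)$ across different $u$ with the fixed $\det H^{\bar k}$ is explicit in your setup, whereas the paper handles it implicitly with the sentence "since we identify $\det\ker(\Delta_{\bar k})$ with $\det H^{\bar k}(X,\E,H)$ along the deformation." The Gram-determinant step, the sign of $\partial_u\Gamma=\Gamma\alpha$, and the final combination via the Leibniz rule on $\eta_{\bar0}\otimes\eta_{\bar1}^{-1}$ are all correct.
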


\begin{proof}
Recall that 
$\eta_{\bar k}=\nu_{\bar k,1}\wedge\cdots\wedge\nu_{\bar k,b_{\bar k}}$,
where $\{\nu_{\bar k,i}\}_{i=1}^{b_{\bar k}}$ is an orthonormal basis of
$H^{\bar k}(X,\E,H)$, $k=0,1$.
Since $(\nu_{\bar k,i},\nu_{\bar k,j})
=\int_X\nu_{\bar k,i}\wedge\Gamma\,\nu_{\bar k,j}=\delta_{ij}$,
we get, by taking the derivative with respect to $u$,
$$
\Re\left(\frac{\partial\nu_{\bar k,i}}{\partial u},\nu_{\bar k,i}\right)
=-\frac{1}{2}(\nu_{\bar k,i},\alpha\nu_{\bar k,i}).
$$
We can adjust the phase of $\nu_{\bar k,i}$ so that 
$(\frac{\partial\nu_{\bar k,i}}{\partial u},\nu_{\bar k,i})$ is real.
Since we identify $\det\ker(\Delta_{\bar k})$ with $\det H^{\bar k}(X,\E,H)$
along the deformation, we have
\begin{align}
\frac{\partial\eta_{\bar k}}{\partial u}
&=\sum_{i=1}^{b_{\bar k}}\nu_{\bar k,1}\wedge\cdots\wedge
  \frac{\partial\nu_{\bar k,i}}{\partial u}\wedge\cdots
  \wedge\nu_{\bar k,b_{\bar k}} \nno
&=-\frac{1}{2}\sum_{i=1}^{b_{\bar k}}(\nu_{\bar k,i},\alpha\,\nu_{\bar k,i})
  \,\eta_{\bar k} 
=-\frac{1}{2}\Tr(\alpha Q_{\bar k})\,\eta_{\bar k}. \nonumber
\end{align}
The result follows.
\end{proof}

Combining Lemma~\ref{lem:var-det} and Lemma~\ref{lem:var-vol}, we have

\begin{theorem}[metric independence of analytic torsion]\label{thm:indept}
Let $X$ be a compact, oriented manifold of odd dimension, $\E$, a flat vector
bundle over $X$, and $H$, a closed differential form on $X$ of odd degree.
Then the analytic torsion $\tau(X,\E,H)$ of the twisted de Rham complex
does not depend on the choice of the Riemannian metric on $X$ or the
Hermitian metric on $\E$.
\end{theorem}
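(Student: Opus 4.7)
The plan is to derive the theorem as an immediate consequence of the two variational lemmas just established, by showing that the metric-derivative of the determinant factor in $\tau(X,\E,H)$ is cancelled exactly by the metric-derivative of the volume element factor.

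First I would fix an arbitrary smooth one-parameter family $u\mapsto(g_X(u),g_\E(u))$ of metrics, and consider $\tau(X,\E,H)$ as a smooth function of $u$ valued in the fixed line $\det H^\bullet(X,\E,H)$ (the cohomology and hence its determinant line are independent of metrics). Writing
\[
\tau(X,\E,H)=\bigl(\Det'd_{\bar 0}^\dagger d_{\bar 0}\bigr)^{1/2}
\bigl(\Det'd_{\bar 1}^\dagger d_{\bar 1}\bigr)^{-1/2}\,\eta_{\bar 0}\otimes\eta_{\bar 1}^{-1},
\]
I would differentiate the two factors separately. By Lemma~\ref{lem:var-det},
\[
\frac{\partial}{\partial u}\log\bigl[(\Det'd_{\bar 0}^\dagger d_{\bar 0})^{1/2}(\Det'd_{\bar 1}^\dagger d_{\bar 1})^{-1/2}\bigr]
=\tfrac{1}{2}\sum_{k=0,1}(-1)^k\Tr(\alpha Q_{\bar k}),
\]
while by Lemma~\ref{lem:var-vol},
\[
\frac{\partial}{\partial u}\log(\eta_{\bar 0}\otimes\eta_{\bar 1}^{-1})
=-\tfrac{1}{2}\sum_{k=0,1}(-1)^k\Tr(\alpha Q_{\bar k}),
\]
for the appropriate choice of phases of the harmonic bases. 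Adding these two identities gives $\partial_u\log\tau(X,\E,H)=0$, so $\tau(X,\E,H)$ is constant along the deformation.

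To conclude independence (rather than just local constancy), I would use that the space of pairs $(g_X,g_\E)$ of Riemannian and Hermitian metrics is convex, hence path-connected: given any two pairs $(g_X^{(0)},g_\E^{(0)})$ and $(g_X^{(1)},g_\E^{(1)})$, the linear interpolation $(1-u)(g_X^{(0)},g_\E^{(0)})+u(g_X^{(1)},g_\E^{(1)})$ is a valid smooth family, and the previous step shows $\tau$ takes the same value at $u=0$ and $u=1$.

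The substantive work already lives in the two lemmas, so there is no real obstacle at the level of Theorem~\ref{thm:indept} itself; the only subtlety to flag is the choice of phases of $\nu_{\bar k,i}$ so that $(\partial_u\nu_{\bar k,i},\nu_{\bar k,i})$ is real, which is what allows Lemma~\ref{lem:var-vol} to be applied as stated and makes the cancellation hold on the nose rather than merely up to a purely imaginary term; this is harmless because the torsion is in any case only defined up to a phase in the complex case.
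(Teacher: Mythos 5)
Your proof is correct and is essentially the paper's own argument: the paper simply states ``Combining Lemma~\ref{lem:var-det} and Lemma~\ref{lem:var-vol}, we have [the theorem],'' and you spell out exactly the intended cancellation of the two logarithmic derivatives. The additional remarks about convexity of the space of metric pairs and the choice of phases are harmless elaborations of what the paper leaves implicit.
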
 

\subsection{Variation of analytic torsion with respect to the flux in a
cohomology class}\label{sect:var-H}
We continue to assume that $\dim X$ is odd and use the same notation as above.
Suppose the (real) flux form $H$ is deformed smoothly along a one-parameter
family with parameter $v\in\RE$ in such a way that the cohomology class
$[H]\in H^{\bar1}(X,\RE)$ is fixed.
Then $\frac{\partial H}{\partial v}=-dB$ for some form $B\in\Omega^{\bar0}(X)$
that depends smoothly on $v$; let
$$
\beta=B\wedge\cdot\;.
$$
As before, we show in the next two lemmas that the variation of the 
regularized determinants cancels that of the volume elements.

\begin{lemma}\label{lem:var-det-H}
Under the above assumptions,
$$
\frac{\partial}{\partial v}\log[\Det'd_{\bar 0}^\dagger d_{\bar 0}\,
(\Det'd_{\bar 1}^\dagger d_{\bar 1})^{-1}]
=2\sum_{k=0,1}(-1)^k\Tr(\beta Q_{\bar k}).
$$
\end{lemma}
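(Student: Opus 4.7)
The approach parallels the proof of Lemma \ref{lem:var-det}, the essential novelty being that now \emph{both} $d_{\bar k}$ and $d_{\bar k}^\dagger$ vary along the deformation. First, I would compute the variations. Since $B\in\Omega^{\bar 0}(X)$ has even total degree, expanding $d_{\bar k}=\nabla^\E+H\wedge\cdot\,$ gives
\begin{equation*}
dB\wedge\omega=[d_{\bar k},\beta]\,\omega\qquad\text{for }\omega\in\Omega^{\bar k}(X,\E),
\end{equation*}
because wedging with an even-degree form commutes in the graded sense with $H\wedge\cdot\,$ and interacts with $\nabla^\E$ only through $dB\wedge\cdot\,$. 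Combined with $\partial H/\partial v=-dB$, this yields
\begin{equation*}
\frac{\partial d_{\bar k}}{\partial v}=-[d_{\bar k},\beta],\qquad\frac{\partial d_{\bar k}^\dagger}{\partial v}=-[\beta^\dagger,d_{\bar k}^\dagger],
\end{equation*}
the second by taking adjoints with the metric held fixed.

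Next, following Lemma \ref{lem:var-det}, I would set $f(s,v):=\Gamma(s)\sum_{k=0,1}(-1)^k\zeta(s,d_{\bar k}^\dagger d_{\bar k})$ and differentiate under the Mellin integral. Applying Duhamel's principle and trace cyclicity together with $d_{\bar k}P_{\bar k}=d_{\bar k}$, $P_{\bar k}d_{\bar k}^\dagger=d_{\bar k}^\dagger$ and $P_{\bar k}^2=P_{\bar k}$, the contribution of $\partial_v(d_{\bar k}^\dagger d_{\bar k})=-[\beta^\dagger,d_{\bar k}^\dagger]\,d_{\bar k}-d_{\bar k}^\dagger\,[d_{\bar k},\beta]$ is reorganized via the intertwiner $d_{\bar k}e^{-td_{\bar k}^\dagger d_{\bar k}}=e^{-td_{\bar k}d_{\bar k}^\dagger}d_{\bar k}$ and a re-indexing of the alternating sum over $k$; the four resulting traces collapse to a single total $t$-derivative
\begin{equation*}
\sum_{k=0,1}(-1)^k\Tr\!\left(\frac{\partial(d_{\bar k}^\dagger d_{\bar k})}{\partial v}\,e^{-td_{\bar k}^\dagger d_{\bar k}}P_{\bar k}\right)=-\frac{\partial}{\partial t}\sum_{k=0,1}(-1)^k\Tr\!\big(2\beta\,e^{-t\Delta_{\bar k}}\big),
\end{equation*}
the factor $2$ (rather than $\beta+\beta^\dagger$) reflecting the simultaneous variation of $d_{\bar k}$ and $d_{\bar k}^\dagger$ together with the symmetry $\Tr(\beta^\dagger Q_{\bar k})=\Tr(\beta Q_{\bar k})$ forced by the alternating sum. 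The $\partial_v P_{\bar k}$ piece vanishes: $P_{\bar k}^2=P_{\bar k}$ forces $P_{\bar k}(\partial_v P_{\bar k})P_{\bar k}=(1-P_{\bar k})(\partial_v P_{\bar k})(1-P_{\bar k})=0$, so $\partial_v P_{\bar k}$ is off-diagonal with respect to $C^{\bar k}=\im d_{\bar k}^\dagger\oplus\ker d_{\bar k}$, while $e^{-td_{\bar k}^\dagger d_{\bar k}}$ is block-diagonal in that splitting.

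Integrating by parts in $t$ then gives $\partial_v f=2s\sum_k(-1)^k\int_0^\infty t^{s-1}\Tr\!\big(\beta(e^{-t\Delta_{\bar k}}-Q_{\bar k})\big)\,dt$ up to sign. Since $n=\dim X$ is odd and $\beta$ has order zero, the short-time expansion of $\Tr(\beta\,e^{-t\Delta_{\bar k}})$ contains only half-integer powers of $t$ and no constant term, so $\int_0^1 t^{s-1}\Tr(\beta\,e^{-t\Delta_{\bar k}})\,dt$ is holomorphic at $s=0$; exponential decay of $e^{-t\Delta_{\bar k}}-Q_{\bar k}$ makes the $\int_1^\infty$ piece entire in $s$. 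The simple pole at $s=0$ comes solely from $-\int_0^1 t^{s-1}\Tr(\beta Q_{\bar k})\,dt=-\Tr(\beta Q_{\bar k})/s$, giving $\partial_v f|_{s=0}=-2\sum_k(-1)^k\Tr(\beta Q_{\bar k})$. Since an analog of Corollary \ref{var-zeta0} ensures that $\sum_k(-1)^k\zeta(0,d_{\bar k}^\dagger d_{\bar k})$ is $v$-independent, the same identity $\log[\Det' d_{\bar 0}^\dagger d_{\bar 0}(\Det' d_{\bar 1}^\dagger d_{\bar 1})^{-1}]=-\lim_{s\to 0}[f(s,v)-\tfrac{1}{s}\sum_k(-1)^k\zeta(0,d_{\bar k}^\dagger d_{\bar k})]$ used in Lemma \ref{lem:var-det} yields the claim upon $v$-differentiation. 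The main obstacle is the algebraic collapse in the middle paragraph: showing that the four trace terms coming from $\partial_v(d_{\bar k}^\dagger d_{\bar k})$, after cycling through the trace and re-indexing the alternating sum, assemble into a single total $t$-derivative with coefficient $2\beta$ rather than $\beta+\beta^\dagger$.
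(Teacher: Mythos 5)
Your outline reproduces the paper's proof faithfully in structure: same variation formulae $\partial_v d_{\bar k}=[\beta,d_{\bar k}]$ and $\partial_v d_{\bar k}^\dagger=-[\beta^\dagger,d_{\bar k}^\dagger]$ (your $-[d_{\bar k},\beta]$ equals $[\beta,d_{\bar k}]$), same Mellin-transform function $f(s,v)$, same use of Duhamel, cyclicity, the intertwiner $d_{\bar k}e^{-td_{\bar k}^\dagger d_{\bar k}}=e^{-td_{\bar k}d_{\bar k}^\dagger}d_{\bar k}$, the re-indexing of the $(-1)^k$-sum, the vanishing of the $\partial_vP_{\bar k}$ piece via $P_{\bar k}(\partial_vP_{\bar k})P_{\bar k}=0$, the integration by parts in $t$, the odd-dimensionality to kill the constant term in the small-$t$ expansion, and the transfer to $\log\Det'$ via the $v$-invariance of $\sum_k(-1)^k\zeta(0,d_{\bar k}^\dagger d_{\bar k})$ (an analogue of Corollary~\ref{var-zeta0}). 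So far so good.

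The genuine gap is exactly where you flag ``the main obstacle'': why $\beta+\beta^\dagger$ becomes $2\beta$. Expanding the four trace terms from $\partial_v(d_{\bar k}^\dagger d_{\bar k})$ against $e^{-td_{\bar k}^\dagger d_{\bar k}}$ and using cyclicity with the intertwiner produces, before any re-indexing in $k$, a quantity of the form
$\Tr\big((\beta+\beta^\dagger)(e^{-td_{\bar k}^\dagger d_{\bar k}}d_{\bar k}^\dagger d_{\bar k}-e^{-td_{\bar k}d_{\bar k}^\dagger}d_{\bar k}d_{\bar k}^\dagger)\big)$.
The replacement $\beta+\beta^\dagger\rightsquigarrow 2\beta$ is \emph{not} ``forced by the alternating sum'' (summing over $k$ with $(-1)^k$ merely reassembles the two pieces into $\Tr(\beta e^{-t\Delta_{\bar k}}\Delta_{\bar k})$; it does nothing to trade $\beta^\dagger$ for $\beta$). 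The correct mechanism is the one behind the paper's terse remark ``We note that $B$, hence $\beta$ is real'': $\beta=B\wedge\cdot$ preserves the real subspace, the operators $e^{-td_{\bar k}^\dagger d_{\bar k}}d_{\bar k}^\dagger d_{\bar k}$ and $e^{-td_{\bar k}d_{\bar k}^\dagger}d_{\bar k}d_{\bar k}^\dagger$ are self-adjoint and real, so $\Tr(\beta X)$ is real and hence $\Tr(\beta^\dagger X)=\overline{\Tr(\beta X)}=\Tr(\beta X)$. (Note also that $\beta\neq\beta^\dagger$ as operators in general, so the identification cannot be purely algebraic.) Your statement $\Tr(\beta^\dagger Q_{\bar k})=\Tr(\beta Q_{\bar k})$ is a correct instance of the same reality argument, but it enters at the wrong place in your write-up (only after the $s\to0$ limit, where only $Q_{\bar k}$ survives) and is misattributed to the alternating sum. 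Supplying the reality argument at the level of the heat-trace identity closes the gap and makes your proof complete.

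One minor point: your intermediate displayed identity is missing the factor of $t$ produced by Duhamel on the left-hand side, and carries a sign you flag as undetermined; both are benign bookkeeping that resolves the same way as in the paper's proof of Lemma~\ref{lem:var-det}.
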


\begin{proof}
As in the proof of Lemma~\ref{lem:var-det}, we set
$$
f(s,v)=\sum_{k=0,1}(-1)^k\int_0^\infty t^{s-1}
\Tr(e^{-td_{\bar k}^\dagger d_{\bar k}}P_{\bar k})\,dt.
$$
We note that $B$, hence $\beta$ is real.
Using
$$
\frac{\partial d_{\bar k}}{\partial v}=[\beta,d_{\bar k}],
\qquad\frac{\partial d_{\bar k}^\dagger}{\partial v}=-
[\beta^\dagger,d_{\bar k}^\dagger],
$$
$$
P_{\bar k}^2=P_{\bar k}=P_{\bar k}^\dagger,\qquad
P_{\bar k}\frac{\partial P_{\bar k}}{\partial v}P_{\bar k}=0
$$
and by Dumahel's principle, we get
\begin{align}
\frac{\partial f}{\partial v} 
&=\sum_{k=0,1}(-1)^k\int_0^\infty\!t^{s-1}
  \Tr\left(t([\beta^\dagger,d_{\bar k}^\dagger]d_{\bar k}-
  d_{\bar k}^\dagger[\beta,d_{\bar k}])
  e^{-td_{\bar k}^\dagger d_{\bar k}}+e^{-t d_{\bar k}^\dagger d_{\bar k}}
  \{{\textstyle \frac{\partial P_{\bar k}}{\partial v}},P_{\bar k}\}
  \right)dt \nno
&=2\sum_{k=0,1}(-1)^k\int_0^\infty\!t^{s-1}\Tr\left(
  t\beta(e^{-td_{\bar k}^\dagger d_{\bar k}}d_{\bar k}^\dagger d_{\bar k}
  -e^{-td_{\bar k}d_{\bar k}^\dagger}d_{\bar k}d_{\bar k}^\dagger)
  +e^{-td_{\bar k}^\dagger d_{\bar k}}P_{\bar k}
  {\textstyle \frac{\partial P_{\bar k}}{\partial v}}P_{\bar k}\right)dt \nno
&=2\sum_{k=0,1}(-1)^k\int_0^\infty\!t^s
  \Tr(\beta e^{-t\Delta_{\bar k}}\Delta_{\bar k})\,dt \nno
&=-2\sum_{k=0,1}(-1)^k\int_0^\infty\!t^s\frac{\partial}{\partial t}
  \Tr\big(\beta(e^{-t\Delta_{\bar k}}-Q_{\bar k})\big)\,dt.   \nonumber
\end{align}
The rest is similar to the proof of Lemma~\ref{lem:var-det}. 
\end{proof}

\begin{corollary}\label{var-zeta0-H}
Under the above deformation, each $\zeta(0,d_{\bar k}^\dagger d_{\bar k})$
($k=0,1$) is invariant.
\end{corollary}

\begin{proof}
We follow the proof of Corollary~\ref{var-zeta0}, using the fact that
$b_{\bar0}(X,\E,H)$ depends only on the cohomology class of $H$.
\end{proof}

If $n$ is odd and $H=0$, then $\zeta(0,\Delta_i^{\E})=-b_i(X,\E)$ and
\begin{align}
\zeta(0,d_{\bar0}^\dagger d_{\bar0})&=\sum_{i=1}^{(n+1)/2}
        i\,(b_{2i}(X,\E)-b_{2i-1}(X,\E)),                      \nno
\zeta(0,d_{\bar1}^\dagger d_{\bar1})&=\sum_{i=1}^{(n-1)/2}
        i\,(b_{2i+1}(X,\E)-b_{2i}(X,\E)).                      \nonumber
\end{align}
We hope that when $n$ is odd but $H\ne0$, 
$\zeta(0,d_{\bar k}^\dagger d_{\bar k})$ can also be expressed in terms
of topological numbers that are invariant under homotopy equivalences 
preserving $[H]$.

\begin{lemma}\label{lem:var-vol-H}
Under the same assumptions, along any one-parameter deformation of $H$
that fixes the cohomology class $[H]$, the volume elements $\eta_{\bar0}$,
$\eta_{\bar1}$ can be chosen so that 
$$
\frac{\partial}{\partial v}(\eta_{\bar0}\otimes\eta_{\bar1}^{-1})=
-\sum_{k=0,1}(-1)^k\Tr(\beta Q_{\bar k})\,\eta_{\bar0}\otimes\eta_{\bar1}^{-1},
$$
where we identify $\det H^\bullet(X,\E,H)$ along the deformation using
(\ref{e^B}).
\end{lemma}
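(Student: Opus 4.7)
The plan is to mirror the argument of Lemma~\ref{lem:var-vol}, replacing the variation of the inner product (caused by the metric change) with the variation in identification of determinant lines (caused by the flux change via $\varepsilon_B$). Fix $v_0\in\RE$ and, for $v$ near $v_0$, set $B_{v_0,v}:=\int_{v_0}^v B(v')\,dv'\in\Omega^{\bar0}(X)$, which satisfies $dB_{v_0,v}=H(v_0)-H(v)$ and whose exponential realizes the isomorphism of (\ref{e^B}) from $H^\bullet(X,\E,H(v_0))$ to $H^\bullet(X,\E,H(v))$. Choose smoothly in $v$ an orthonormal basis $\{\nu_{\bar k,i}(v)\}_{i=1}^{b_{\bar k}}$ of $\ker(\Delta_{\bar k}^{\E,H(v)})$; the inner product $(\cdot,\cdot)$ on $\Omega^\bullet(X,\E)$ is $v$-independent since the metrics are fixed.

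Since $\varepsilon_{-B_{v_0,v}}$ intertwines $\nabla^{\E,H(v)}$ with $\nabla^{\E,H(v_0)}$, the form $e^{-B_{v_0,v}}\wedge\nu_{\bar k,i}(v)$ is closed in the $H(v_0)$-twisted complex, and I would form its harmonic projection
$$ \mu_{\bar k,i}(v):=Q_{\bar k}\bigl(e^{-B_{v_0,v}}\wedge\nu_{\bar k,i}(v)\bigr)\in\ker(\Delta_{\bar k}^{\E,H(v_0)}), $$
a harmonic representative of the class $\varepsilon_{B_{v_0,v}}^{-1}[\nu_{\bar k,i}(v)]\in H^{\bar k}(X,\E,H(v_0))$ with $\mu_{\bar k,i}(v_0)=\nu_{\bar k,i}(v_0)$. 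Writing $\mu_{\bar k,i}(v)=\sum_j M_{ji}(v)\,\nu_{\bar k,j}(v_0)$ with $M(v_0)=I$, the identified volume element in $\det H^\bullet(X,\E,H(v_0))$ equals $\det M(v)\,\eta_{\bar k}(v_0)$, so the covariant derivative of $\eta_{\bar k}$ under the identification reduces to computing $\tr M'(v_0)$.

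A direct differentiation gives
$$ \partial_v\mu_{\bar k,i}|_{v_0}=Q_{\bar k}\bigl(\partial_v\nu_{\bar k,i}|_{v_0}-\beta\,\nu_{\bar k,i}(v_0)\bigr), $$
whence $M'_{ii}(v_0)=(\nu_{\bar k,i}(v_0),\partial_v\nu_{\bar k,i}|_{v_0})-(\nu_{\bar k,i}(v_0),\beta\,\nu_{\bar k,i}(v_0))$. Differentiating the orthonormality $(\nu_{\bar k,i}(v),\nu_{\bar k,i}(v))=1$ forces $(\partial_v\nu_{\bar k,i},\nu_{\bar k,i})$ to be purely imaginary at $v_0$; adjusting the phase of each $\nu_{\bar k,i}(v)$ exactly as in Lemma~\ref{lem:var-vol} kills this term, leaving $\tr M'(v_0)=-\Tr(\beta Q_{\bar k})$. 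Assembling the $\ZA_2$-graded combination for $k=0,1$ then yields the stated formula for $\eta_{\bar0}\otimes\eta_{\bar1}^{-1}$.

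The main technical point is to recognize that, although $\partial_v\nu_{\bar k,i}$ is itself not closed in either the $H(v)$- or the $H(v_0)$-twisted complex, the combination $\partial_v\nu_{\bar k,i}|_{v_0}-\beta\,\nu_{\bar k,i}(v_0)$ arising from differentiating $e^{-B_{v_0,v}}\wedge\nu_{\bar k,i}(v)$ is $\nabla^{\E,H(v_0)}$-closed (a fact that follows by differentiating $\nabla^{\E,H(v)}\nu_{\bar k,i}(v)=0$ in $v$ and using $\partial_v H=-dB$). It is exactly this combination, projected onto harmonic forms by $Q_{\bar k}$, that measures the genuine variation of the identified harmonic representative and produces the $\Tr(\beta Q_{\bar k})$ contribution; once this is isolated, the phase adjustment and trace computation proceed in parallel with the metric case, and combined with Lemma~\ref{lem:var-det-H} they will yield the flux-cohomology-class invariance of $\tau(X,\E,H)$.
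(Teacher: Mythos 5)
Your proposal is correct and takes essentially the same route as the paper: both pull the volume elements back to a fixed reference determinant line via $(\det\varepsilon_B)^{-1}$ and differentiate in $v$, with the phase adjustment borrowed from Lemma~\ref{lem:var-vol} making the orthonormality contribution vanish. Your version fleshes out the paper's very terse computation by introducing the harmonic projections $\mu_{\bar k,i}$ and the transition matrix $M(v)$ explicitly, and the observation that $\partial_v\nu_{\bar k,i}|_{v_0}-\beta\,\nu_{\bar k,i}(v_0)$ is $\nabla^{\E,H(v_0)}$-closed is a correct and useful clarification of why the projection step is well-posed.
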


\begin{proof}
Fix a reference point, say $v=0$, and let $H^{(0)}$, $\eta_{\bar k}^{(0)}$
be the values of $H$, $\eta_{\bar k}$, respectively, at $v=0$.
To compare the volume elements $\eta_{\bar k}\in\det H^{\bar k}(X,\E,H)$ 
at different values of $v$, we map them to $\det H^{\bar k}(X,\E,H^{(0)})$
by the inverse of the isomorphism 
$$
\det\varepsilon_B\colon\det H^\bullet(X,\E,H^{(0)})\to\det H^\bullet(X,\E,H)
$$ 
induced by (\ref{e^B}).
Since $\varepsilon_B=e^\beta$ on $\Omega^\bullet(X,\E)$, we have, for $k=0,1$, 
$$
\frac{\partial}{\partial v}(\det\varepsilon_B)^{-1}\eta_{\bar k}=
-\Tr(\beta Q_{\bar k})\,(\det\varepsilon_B)^{-1}\eta_{\bar k}.
$$
The result follows.
\end{proof}

Combining Lemma~\ref{lem:var-det-H} and Lemma~\ref{lem:var-vol-H}, we have

\begin{theorem}[flux representative independence of analytic torsion]
\label{thm:indept-H}
Let $X$ be a compact, oriented manifold of odd dimension and $\E$,
a flat vector bundle over $X$.
Suppose $H$ and $H'$ are closed differential forms on $X$ of odd degrees
representing the same de Rham cohomology class, and let $B$ be an even
form so that $H'=H-dB$.
Then the analytic torsion $\tau(X,\E,H')=(\det\varepsilon_B)(\tau(X,\E,H))$.
\end{theorem}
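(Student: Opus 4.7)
The plan is to reduce Theorem \ref{thm:indept-H} to a one-parameter integration of the two variational identities established just above, namely Lemmas \ref{lem:var-det-H} and \ref{lem:var-vol-H}. First I would introduce the affine deformation
\[
  H_v := H - v\,dB,\qquad v\in[0,1],
\]
so that $H_0=H$, $H_1=H'$ and $\frac{\partial H_v}{\partial v}=-dB$, which is precisely the situation of \S\ref{sect:var-H}. Since $[H_v]=[H]$ for all $v$, the twisted Betti numbers stay constant (by Proposition \ref{scaling}'s precursor, i.e.\ the isomorphism \eqref{e^B} applied to $e^{vB}$), so we may meaningfully track $\tau(X,\E,H_v)$ as $v$ varies.

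Next, to compare the torsions living in different determinant lines $\det H^\bullet(X,\E,H_v)$, I would pull everything back to the reference line $\det H^\bullet(X,\E,H)$ along the isomorphism $\det\varepsilon_{vB}$ induced by \eqref{e^B}. Setting $\beta = B\wedge\cdot\,$, Lemma \ref{lem:var-det-H} gives
\[
  \frac{\partial}{\partial v}\log\bigl[(\Det' d_{\bar0}^\dagger d_{\bar0})^{1/2}
  (\Det' d_{\bar1}^\dagger d_{\bar1})^{-1/2}\bigr]
  =\sum_{k=0,1}(-1)^k\Tr(\beta\,Q_{\bar k}),
\]
while Lemma \ref{lem:var-vol-H} gives, with the volume elements chosen consistently,
\[
  \frac{\partial}{\partial v}\bigl[(\det\varepsilon_{vB})^{-1}(\eta_{\bar0}\otimes\eta_{\bar1}^{-1})\bigr]
  =-\sum_{k=0,1}(-1)^k\Tr(\beta\,Q_{\bar k})\,(\det\varepsilon_{vB})^{-1}(\eta_{\bar0}\otimes\eta_{\bar1}^{-1}).
\]
Multiplying the scalar prefactor with the pulled-back volume element, the two derivatives cancel exactly, so the combination
\[
  (\det\varepsilon_{vB})^{-1}\bigl(\tau(X,\E,H_v)\bigr)\in\det H^\bullet(X,\E,H)
\]
is independent of $v$. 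Evaluating at $v=0$ and $v=1$ and applying $\det\varepsilon_B$ to both sides yields $\tau(X,\E,H')=(\det\varepsilon_B)\bigl(\tau(X,\E,H)\bigr)$.

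The main obstacle, which is already handled by the two lemmas, is to show that the zeta-regularized determinants $\Det'\,d_{\bar k}^\dagger d_{\bar k}$ (which are non-local, involve the pseudodifferential projections $P_{\bar k}$ and live only after invoking Theorem \ref{zeta-hol}) vary smoothly in $v$ and satisfy a clean variational formula whose coefficient $2$ is precisely what matches the square-root in the definition \eqref{def-tor}. A secondary point to be verified is that the orthonormal harmonic bases $\{\nu_{\bar k,i}\}$ can be chosen smoothly in $v$ with real inner products $(\partial_v\nu_{\bar k,i},\nu_{\bar k,i})$, which is exactly the phase adjustment used in the proof of Lemma \ref{lem:var-vol-H}; this ensures the identification of determinant lines along the deformation is the right one. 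Once these are in hand, the theorem follows by the cancellation of derivatives and integration from $0$ to $1$.
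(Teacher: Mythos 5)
Your proposal is correct and is exactly the paper's argument: the paper proves Theorem~\ref{thm:indept-H} simply by combining Lemma~\ref{lem:var-det-H} and Lemma~\ref{lem:var-vol-H} along the linear path $H_v = H - v\,dB$, identifying determinant lines via $\det\varepsilon_{vB}$ as in~(\ref{e^B}). Your explicit bookkeeping of the factor $2$ against the square root in~(\ref{def-tor}), the pullback by $(\det\varepsilon_{vB})^{-1}$, and the phase adjustment in the choice of the bases $\{\nu_{\bar k,i}\}$ all match what is implicit in the paper's one-line proof.
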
 

\subsection{Relation to generalized geometry}
Recall that in generalized geometry \cite{Hi,Gua1}, the bundle
$TX\oplus T^*X$ has a bilinear form of signature $(n,n)$ given by
$$
\langle\xi_1+W_1,\xi_2+W_2\rangle:=(\xi_1(W_2)+\xi_2(W_1))/2,
$$
where for $i=1,2$, $\xi_i$ are $1$-forms and $W_i$ are vector fields on $X$. 
A {\em generalized metric} on $X$ is a reduction of the structure group
$O(n,n)$ to $O(n)\times O(n)$.
Equivalently, a generalized metric is a splitting of $TX\oplus T^*X$ to
a direct sum of two sub-bundles of rank $n$ so that the bilinear form is
positive on one and negative on the other.
The positive sub-bundle is the graph of $g+B\in\Gamma(\Hom(TX,T^*X))$, where
$g=g_X$ is a usual Riemannian metric on $X$ and $B$ is a $2$-form on $X$. 

A generalized metric on $X$ defines as follows a scalar product, called
the Born-Infeld metric \cite{Gua2}, on $\Omega^\bullet(X)$.
Let $\sigma$ be the isomorphism from $\Omega^\bullet(X)$ to itself so that
if $\omega$ is the wedge product of $1$-forms, then $\sigma(\omega)$ is 
the product with the order of $1$-forms reversed.
Thus $\sigma(\omega\wedge\omega')=\sigma(\omega')\wedge\sigma(\omega)$ for
any forms $\omega$, $\omega'$ and $\sigma(B)=-B$ if $B$ is a $2$-form.
Choose a (local) orthonormal frame $\{e_i,i=1,\dots,n\}$ on $X$ with respect
to $g$ and let $\hat e_i:=\iota_{e_i}+\iota_{e_i}(g+B)\wedge\cdot$
($i=1,\dots,n$) be operators acting on forms.
Define a new star operation by
$$
\star_{(g,B)}\,\omega=\sigma(\hat e_n\cdots\hat e_2\hat e_1\omega).
$$
When $B=0$, $\star_{(g,B)}$ is the usual Hodge star $*_g$ given by $g$.
The {\em Born-Infeld metric} (scalar product) on $\Omega^\bullet(X)^\CO$ is
\cite{Gua2} 
$$
(\omega,\omega')_{(g,B)}:=\int_X\omega\wedge\star_{(g,B)}\overline{\omega'}
$$
for $\omega,\omega'\in\Omega^\bullet(X)^\CO$.

We show that the isomorphism $\varepsilon_B$ intertwines the 
Born-Infeld metric $(\cdot,\cdot)_{(g,B)}$ and the usual scalar
product $(\cdot,\cdot)_g$ defined by the Riemannian metric $g$.

\begin{lemma}\label{prop:ggeom}
For any $\omega,\omega'\in\Omega^\bullet(X)^\CO$, we have
$$
(\omega,\omega')_{(g,B)}=(\varepsilon_B(\omega),\varepsilon_B(\omega'))_g.
$$
\end{lemma}

\begin{proof}
Since
$$
(\varepsilon_B^{-1}(\omega),\varepsilon_B^{-1}(\omega'))_{(g,B)}=
\int_X\omega\wedge\sigma(\varepsilon_B\,\hat e_n\cdots\hat e_1\,
\varepsilon_B^{-1}\overline{\omega'}),
$$
it suffices to check that 
$\varepsilon_B\,\hat e_n\cdots\hat e_1\,\varepsilon_B^{-1}$
is independent of $B$.
We replace $B$ by $vB$, where $v\in\RE$.
Then, since $\frac{\partial\hat e_i}{\partial v}=\iota_{e_i}B\wedge\cdot
=-[\beta,\hat e_i]$, we get
$$
\frac{\partial}{\partial v}
\big(\varepsilon_{vB}\,\hat e_n\cdots\hat e_1\,\varepsilon_{vB}^{-1}\big)
=\varepsilon_{vB}\Big([\beta,\hat e_n\cdots\hat e_1]+\sum_{i=1}^n
 \hat e_n\cdots\frac{\partial\hat e_i}{\partial v}\cdots\hat e_1\Big)
 \varepsilon_{vB}^{-1}=0
$$
and the result follows.
\end{proof}

For simplicity, we take $\E$ as the trivial line bundle over $X$.
Let $(d^H_{\bar k})_{(g,B)}^\dagger$ be the adjoint of
$d_{\bar k}^H=d+H\wedge\,\cdot\,$ acting on $\Omega^{\bar k}(X)$ ($k=0,1$)
with respect to the Born-Infeld metric \cite{Gua2}.
Let $(\Delta^H_{\bar k})_{(g,B)}=(d^H_{\bar k})_{(g,B)}^\dagger d_{\bar k}^H
+d_{\overline{k+1}}^H(d^H_{\overline{k+1}})_{(g,B)}^\dagger$ be the
corresponding Laplacians.
Generalizing (\ref{def-tor}) in \S\ref{subsect:constr}, we define the
{\em twisted analytic torsion under a generalized metric} $(g,B)$ as
\[ \tau_{(g,B)}(X,H):=
\big(\Det'(d^H_{\bar 0})_{(g,B)}^\dagger d^H_{\bar 0}\big)^{1/2}
\big(\Det'(d^H_{\bar 1})_{(g,B)}^\dagger d_{\bar 1}\big)^{-1/2}
(\eta^H_{\bar 0})_{(g,B)}\otimes(\eta^H_{\bar 1})_{(g,B)}^{-1}
\in\det H^\bullet(X,H),                                                   \]
where, for $k=0,1$, the determinant of
$(d^H_{\bar k})_{(g,B)}^\dagger d^H_{\bar k}$ is defined as before due to the
vanishing of the non-commutative residue of the pseudodifferential projection
onto $\im(d^H_{\bar k})_{(g,B)}^\dagger$ and $(\eta^H_{\bar k})_{(g,B)}$ is the
unit volume element of $\ker(\Delta^H_{\bar k})_{(g,B)}\cong H^{\bar k}(X,H)$
with respect to the Born-Infeld metric.
When $B=0$, the torsion is $\tau_g(X,H)$ defined in (\ref{def-tor}).

We specialize to the interesting case when $H$ is a $3$-form.

\begin{proposition}\label{prop:gconj}
Let $H$ be a closed $3$-form on $X$ and $H'=H-dB$, where $B$ is the $2$-form
in the generalized metric.
Then 
\[ (d^{H'})_g^\dagger=\varepsilon_B\circ(d^H)_{(g,B)}^\dagger\circ
\varepsilon_B^{-1},\quad
\Delta^{H'}_g=\varepsilon_B\circ\Delta^H_{(g,B)}\circ\varepsilon_B^{-1}.  \]
\end{proposition}

\begin{proof}
We have $d^{H'}=\varepsilon_B\circ d^H\circ\varepsilon_B^{-1}$ from
\$\ref{sect:twistedDR}.
By Lemma~\ref{prop:ggeom}, for any $\omega,\omega'\in\Omega^\bullet(X)^\CO$,
we get
\begin{align*}
\big(\omega,(d^H)_{(g,B)}^\dagger\omega'\big)_{(g,B)}
&=\big(d^H\omega,\omega'\big)_{(g,B)}
=\big(\varepsilon_Bd^H\omega,\varepsilon_B\omega'\big)_g 
=\big(d^{H'}\varepsilon_B\omega,\varepsilon_B\omega'\big)_g\\
&=\big(\varepsilon_B\omega,(d^{H'})^\dagger_g\varepsilon_B\omega'\big)_g
=\big(\omega,\varepsilon_B^{-1}(d^{H'})^\dagger_g
\varepsilon_B\omega'\big)_{(g,B)}.
\end{align*}
Therefore the first equality holds and the second one follows.
\end{proof}

\begin{corollary}
Under the above notations, we have
\[  \tau_g(X,H')=(\det\varepsilon_B)
\big(\tau_{(g,B)}(X,H)\big)\in\det H^\bullet(X,H').   \]
\end{corollary}

\begin{proof}
By Proposition~\ref{prop:gconj}, for $k=0,1$, the operators 
$(d^{H'}_{\bar k})_g^\dagger d^{H'}_{\bar k}$ and
$(d^H_{\bar k})_{(g,B)}^\dagger d^H_{\bar k}$ have the same spectrum and
hence the same regularized determinants.
On the other hand, together with Lemma~\ref{prop:ggeom}, we conclude that
$\varepsilon_B\colon\ker(\Delta^H_{\bar k})_{(g,B)}\to
\ker(\Delta^{H'}_{\bar k})_g$ is an isometry and hence we can choose
$(\eta^{H'}_{\bar k})_g=(\det\varepsilon_B)(\eta^H_{\bar k})_{(g,B)}$.
The result follows. 
\end{proof}

We thus conclude that deformation of $H$ by a $B$-field is equivalent
to deforming the usual metric to a generalized metric.
In this way, deformations of the usual metric and that of the flux by a
$B$-field are unified to a deformation of generalized metric. 
Theorems~\ref{thm:indept} and \ref{thm:indept-H} state that the torsion is
invariant under such a deformation.

\section{Functorial properties of analytic torsion}\label{sect:func}

In this section, we state the basic functorial properties of analytic
torsion for the twisted de Rham complex.
These can be established by a generalization of the proofs of the
corresponding results for the usual analytic torsion \cite{RS,C79,M93}
to the $\ZA_2$-graded case.
We write $d_{\bar k}^\E=d_{\bar k}^{\E,H}$,
$\Delta_{\bar k}^\E=\Delta_{\bar k}^{\E,H}$ and 
$\eta_{\bar k}^\E=\eta_{\bar k}^{\E,H}$ since 
the dependence on the flux form $H$ is clear.

\begin{proposition}
Let $X$ be a compact, oriented Riemannian manifold and $\E_1,\E_2$,
flat Hermitian vector bundles on $X$.
Suppose $H$ is a closed odd-degree form on $X$.
Then 
$$
\tau(X,\E_1\oplus\E_2,H)=\tau(X,\E_1,H)\otimes\tau(X,\E_2,H) 
$$
under the canonical identification
$$
\det H^\bullet(X,\E_1\oplus\E_2,H)\cong\det H^\bullet(X,\E_1,H)\otimes
\det H^\bullet(X,\E_2,H)
$$
induced by the isomorphism
$H^\bullet(X,\E_1\oplus\E_2,H)\cong
H^\bullet(X,\E_1,H)\oplus H^\bullet(X,\E_2,H)$.
\end{proposition}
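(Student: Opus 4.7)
The plan is to observe that every ingredient entering $\tau(X,\E_1\oplus\E_2,H)$ decomposes block-diagonally along the direct sum, and then to check that both the zeta-regularized determinants and the harmonic volume elements respect this decomposition.

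First I would equip $\E_1\oplus\E_2$ with the direct-sum Hermitian metric and the flat connection $\nabla^{\E_1}\oplus\nabla^{\E_2}$. Under the natural identification
$$
\Omega^{\bar k}(X,\E_1\oplus\E_2)\cong\Omega^{\bar k}(X,\E_1)\oplus\Omega^{\bar k}(X,\E_2),
$$
the wedge with $H$ acts only on the form factor, so the twisted differential $d_{\bar k}$ on $\E_1\oplus\E_2$ is block-diagonal with blocks the corresponding $d_{\bar k}$ on $\E_1$ and $\E_2$. Because the scalar product of Section 1.1 is the orthogonal sum of the two summand scalar products, the adjoints $d_{\bar k}^\dagger$, the twisted Laplacians $\Delta_{\bar k}$, the harmonic projections $Q_{\bar k}$, and the pseudodifferential projections $P_{\bar k}$ from the proof of Theorem~\ref{zeta-hol} are all block-diagonal as well.

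Consequently the spectrum with multiplicities of $d_{\bar k}^\dagger d_{\bar k}$ on $\E_1\oplus\E_2$ is the disjoint union of the spectra on $\E_1$ and $\E_2$, so
$$
\zeta\bigl(s,d_{\bar k}^\dagger d_{\bar k}\bigr)\bigl|_{\E_1\oplus\E_2}
=\zeta\bigl(s,d_{\bar k}^\dagger d_{\bar k}\bigr)\bigl|_{\E_1}
+\zeta\bigl(s,d_{\bar k}^\dagger d_{\bar k}\bigr)\bigl|_{\E_2}
$$
as meromorphic functions of $s$. By Theorem~\ref{zeta-hol} each summand is regular at $s=0$, so differentiating at $s=0$ yields
$$
\Det'\bigl(d_{\bar k}^\dagger d_{\bar k}\bigr)\bigl|_{\E_1\oplus\E_2}
=\Det'\bigl(d_{\bar k}^\dagger d_{\bar k}\bigr)\bigl|_{\E_1}\cdot
\Det'\bigl(d_{\bar k}^\dagger d_{\bar k}\bigr)\bigl|_{\E_2}
$$
for $k=0,1$, which is the multiplicativity of the determinant factor in (\ref{def-tor}). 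On the Hodge-theoretic side, the block structure gives an orthogonal splitting $\ker(\Delta_{\bar k}^{\E_1\oplus\E_2})\cong\ker(\Delta_{\bar k}^{\E_1})\oplus\ker(\Delta_{\bar k}^{\E_2})$, which is exactly the cohomological decomposition in the statement. Concatenating oriented orthonormal bases of the two summands yields an oriented orthonormal basis of the sum, so under the canonical isomorphism $\det(V\oplus W)\cong\det V\otimes\det W$ one has $\eta_{\bar k}^{\E_1\oplus\E_2}=\eta_{\bar k}^{\E_1}\otimes\eta_{\bar k}^{\E_2}$. Multiplying by the determinant identity above and using the identification of determinant lines in the statement gives the claim.

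The only substantive issue will be sign bookkeeping in the identification of determinant lines, since $\det H^\bullet(X,\E_1\oplus\E_2,H)$ and $\det H^\bullet(X,\E_1,H)\otimes\det H^\bullet(X,\E_2,H)$ are related by a permutation of tensor factors mixing the $\bar 0$ and $\bar 1$ pieces. Because the splitting is by $\ZA_2$-parity and the two summands at each parity are aligned, this permutation is even and introduces no Koszul sign; fixing the canonical isomorphism accordingly makes the equality hold on the nose rather than merely up to a phase.
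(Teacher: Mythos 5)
Your proof is correct and follows essentially the same route as the paper's: block-diagonality of $d_{\bar k}$ under the direct-sum metric gives factorization of the determinants and a compatible choice of volume elements $\eta_{\bar k}^{\E_1\oplus\E_2}=\eta_{\bar k}^{\E_1}\otimes\eta_{\bar k}^{\E_2}$. You have merely spelled out the additivity of the zeta functions and the Koszul-sign bookkeeping more explicitly than the paper does.
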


\begin{proof}
On $\Omega^\bullet(X,\E_1\oplus\E_2)\cong\Omega^\bullet(X,\E_1)\oplus
\Omega^\bullet(X,\E_2)$, the operator $d_{\bar k}^{\E_1\oplus\E_2}=
d_{\bar k}^{\E_1}\oplus d_{\bar k}^{\E_2}$ is block-diagonal.
Thus the determinant factorizes:
$\Det'((d_{\bar k}^{\E_1\oplus\E_2})^\dagger d_{\bar k}^{\E_1\oplus\E_2})
=\Det'((d_{\bar k}^{\E_1})^\dagger d_{\bar k}^{\E_1})
\Det'((d_{\bar k}^{\E_2})^\dagger d_{\bar k}^{\E_2})$.
Under the above identification, we can choose the volume elements such that
$\eta_{\bar k}^{\E_1\oplus\E_2}=
\eta_{\bar k}^{\E_1}\otimes\eta_{\bar k}^{\E_2}$.
Hence the result.
\end{proof}

\begin{proposition}\label{prop:dual}
Let $X$ be a compact, oriented manifold of dimension $n$ and $\E$, a flat
vector bundle on $X$. 
Suppose $H$ is a closed odd-degree form on $X$.
Then 
$$
\tau(X,\E,H)=\tau(X,\E^*\!,-H)^{(-1)^{n+1}}
$$
under the canonical identification
$$
\det H^\bullet(X,\E,H)\cong\det H^\bullet(X,\E^*\!,-H)^{(-1)^{n+1}} 
$$
induced by Poincar\'e duality in Proposition~\ref{prop:pd}. 
\end{proposition}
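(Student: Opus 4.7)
The plan is to use Lemma~\ref{lemma:adj} to transport every ingredient appearing in $\tau(X,\E,H)$ into its counterpart for $(X,\E^*,-H)$ via the isomorphism $\Gamma = *\sharp$, and then to track carefully the resulting parity shift $\bar k\leftrightarrow\overline{n-k}$, which is the source of the sign $(-1)^{n+1}$.

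First I would handle the determinant factor. By Lemma~\ref{lemma:adj}, $\Gamma$ intertwines $\Delta_{\bar k}^{\E,H}$ with $\Delta_{\overline{n-k}}^{\E^*,-H}$, and the relation $(d_{\bar k}^{\E,H})^\dagger=(-1)^{k+1}\Gamma^{-1}d_{\overline{n+1-k}}^{\E^*,-H}\Gamma$ shows that $\Gamma$ sends $\im((d_{\bar k}^{\E,H})^\dagger)$ isomorphically onto $\im(d_{\overline{n+1-k}}^{\E^*,-H})$. On these invariant subspaces the two Laplacians act as $(d_{\bar k}^{\E,H})^\dagger d_{\bar k}^{\E,H}$ and $d_{\overline{n+1-k}}^{\E^*,-H}(d_{\overline{n+1-k}}^{\E^*,-H})^\dagger$ respectively; combined with the standard identity $\spec'(AA^\dagger)=\spec'(A^\dagger A)$ with matching multiplicities, this yields
$$\Det'\bigl((d_{\bar k}^{\E,H})^\dagger d_{\bar k}^{\E,H}\bigr)=\Det'\bigl((d_{\overline{n+1-k}}^{\E^*,-H})^\dagger d_{\overline{n+1-k}}^{\E^*,-H}\bigr).$$
Now split on parity of $n$. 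If $n$ is odd, $\overline{n+1-k}=\bar k$, so the two determinants match index by index and the prefactor of $\tau(X,\E,H)$ coincides with that of $\tau(X,\E^*,-H)$. If $n$ is even, $\overline{n+1-k}=\overline{k+1}$, so the determinants get swapped and the ratio $(\Det'\cdots)^{1/2}(\Det'\cdots)^{-1/2}$ inverts; in both cases the resulting exponent is $(-1)^{n+1}$.

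Next I turn to the volume elements. The isomorphism $\Gamma\colon\ker(\Delta_{\bar k}^{\E,H})\to\ker(\Delta_{\overline{n-k}}^{\E^*,-H})$ is an isometry: using $\Gamma^2=(-1)^{k(n-k)}$ on $C^{\bar k}$ together with the sign of permuting a $k$-form and an $(n-k)$-form under $\wedge$, one checks that $(\Gamma\omega,\Gamma\omega')_{\E^*,-H}=(\omega,\omega')_{\E,H}$. Hence if $\{\nu_{\bar k,i}\}$ is an oriented orthonormal basis of $H^{\bar k}(X,\E,H)$, then $\{\Gamma\nu_{\bar k,j}\}$ is one of $H^{\overline{n-k}}(X,\E^*,-H)$, and the identity $\int_X\nu_{\bar k,i}\wedge\Gamma\nu_{\bar k,j}=\delta_{ij}$ exhibits it as the basis dual to $\{\nu_{\bar k,i}\}$ under the Poincar\'e pairing of Proposition~\ref{prop:pd}. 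Consequently the isomorphism $\det H^{\bar k}(X,\E,H)\cong(\det H^{\overline{n-k}}(X,\E^*,-H))^{-1}$ sends $\eta_{\bar k}^{\E,H}\mapsto(\eta_{\overline{n-k}}^{\E^*,-H})^{-1}$. A parity check then shows that the composite $\eta_{\bar 0}^{\E,H}\otimes(\eta_{\bar 1}^{\E,H})^{-1}$ maps, under the identification of Proposition~\ref{prop:pd}, to $\eta_{\bar 0}^{\E^*,-H}\otimes(\eta_{\bar 1}^{\E^*,-H})^{-1}$ when $n$ is odd, and to its inverse when $n$ is even—matching the exponent $(-1)^{n+1}$ found in the previous paragraph. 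Combining the two steps yields the claim.

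The main obstacle is the sign bookkeeping: one must verify that $\Gamma$ really is an isometry at the harmonic level (keeping track of the squared phase $(-1)^{k(n-k)}$ and the sign from swapping factors in the wedge product), and confirm that the Poincar\'e-duality identification of determinant lines inverts the volume elements rather than preserving them. Once these orientation issues are settled, the proof is a straightforward combination of the two parity cases.
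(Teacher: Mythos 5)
Your proof is correct and follows essentially the same route as the paper: both rely on Lemma~\ref{lemma:adj} to identify the nonzero spectrum of $(d_{\bar k}^{\E,H})^\dagger d_{\bar k}^{\E,H}$ with that of $(d_{\overline{n+1-k}}^{\E^*,-H})^\dagger d_{\overline{n+1-k}}^{\E^*,-H}$ and to transport the harmonic volume elements via the isometry $\Gamma$, after which the sign $(-1)^{n+1}$ drops out of the parity shift. Your write-up is a bit more explicit in tracking the even/odd cases and correctly records the parity shift $\bar k\to\overline{n-k}$ for the volume elements (the paper's $\overline{n+1-k}$ there appears to be a typo).
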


\begin{proof}
By Lemma~\ref{lemma:adj}, $(d_{\bar k}^{\E,H})^\dagger d_{\bar k}^{\E,H}
=\Gamma^{-1}d_{\overline{n+1-k}}^{\E^*\!,-H}
(d_{\overline{n+1-k}}^{\E^*\!,-H})^\dagger\Gamma$.
So the non-zero spectrum of
$(d_{\bar k}^{\E,H})^\dagger d_{\bar k}^{\E,H}$,
counting multiplicity, is identical to that of 
$(d_{\overline{n+1-k}}^{\E^*\!,-H})^\dagger d_{\overline{n+1-k}}^{\E^*\!,-H}$,
and so is the regularized determinant.
The isometry $\Gamma$ induces Poincar\'s duality, under which the volume
elements $\eta_{\bar k}^{\E,H}=\eta_{\overline{n+1-k}}^{\E^*\!,-H}$.
The result then follows from the definition of the twisted torsion.
\end{proof}

\begin{proposition}
Let $X_1$, $X_2$ be two compact oriented manifolds with the same universal
covering manifold.
Suppose the fundamental group $\pi_1(X_1)$ is a subgroup of $\pi_1(X_2)$.
Let $\rho_1$ be a representation of $\pi_1(X_1)$ and let $\rho_2$ be the
induced representation of $\pi_1(X_2)$.
Denote by the flat vector bundles associated with $\rho_1$, $\rho_2$ by
$\E_1$, $\E_2$, respectively.
Suppose the closed odd-degree forms $H_1$ on $X_1$ and $H_2$ on $X_2$
pull-back to the same form on the universal covering.
Then 
$$
\tau(X_1,\E_1,H_1)=\tau(X_2,\E_2,H_2)
$$
under the canonical identification 
$$
\det H^\bullet(X_1,\E_1,H_1)\cong\det H^\bullet(X_2,\E_2,H_2)
$$
induced by the isomorphism
$H^\bullet(X_1,\E_1,H_1)\cong H^\bullet(X_2,\E_2,H_2)$.
\end{proposition}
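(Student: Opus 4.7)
The strategy is to realise both twisted complexes as spaces of equivariant forms on the common universal cover $\widetilde X$ and use Frobenius reciprocity to identify them, then check that all the structures entering the definition (\ref{def-tor}) of the torsion transport along this identification.

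Since $X_1,X_2$ are closed and share $\widetilde X$ as universal cover, the inclusion $\pi_1(X_1)\hookrightarrow\pi_1(X_2)$ is of finite index $m$, and $p\colon X_1\to X_2$ is an $m$-sheeted covering with $\tilde p_2=p\circ\tilde p_1$ for the universal covering maps $\tilde p_i\colon\widetilde X\to X_i$. First I would establish a canonical isomorphism $\Omega^\bullet(X_1,\E_1)\cong\Omega^\bullet(X_2,\E_2)$. Concretely, writing $\Omega^\bullet(X_i,\E_i)\cong(E_i\otimes\Omega^\bullet(\widetilde X))^{\pi_1(X_i)}$ and using $E_2=\mathrm{Ind}_{\pi_1(X_1)}^{\pi_1(X_2)}E_1$, Frobenius reciprocity at the level of $\pi_1$-invariants provides the identification; equivalently, $\E_2\cong p_*\E_1$ as flat bundles on $X_2$, and for the local diffeomorphism $p$ one has $\Omega^\bullet(X_2,p_*\E_1)=\Omega^\bullet(X_1,\E_1)$.

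Next I would verify that this isomorphism intertwines the twisted differentials. The flat connections $\nabla^{\E_1}$ and $\nabla^{\E_2}$ both come from the exterior derivative on $\widetilde X$ and so correspond. The hypothesis $\tilde p_1^*H_1=\tilde p_2^*H_2$, combined with $\tilde p_2=p\circ\tilde p_1$ and the fact that $\tilde p_1$ is a local diffeomorphism, forces $p^*H_2=H_1$ on $X_1$, so wedging by $H_1$ corresponds to wedging by $H_2$. I would then transport a Riemannian metric on $X_2$ and Hermitian metric on $\E_2$ to $X_1$ and $\E_1$ through $p$ and the Frobenius-reciprocity identification; with these matched metrics the isomorphism becomes a unitary isomorphism of pre-Hilbert spaces, the Hodge stars and adjoints $d_{\bar k}^\dagger$ correspond, and the twisted Laplacians $\Delta_{\bar k}^{\E_1,H_1}$ and $\Delta_{\bar k}^{\E_2,H_2}$ are unitarily conjugate. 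Consequently they have identical spectra with multiplicities, so $\Det'(d_{\bar k}^{\E_1,H_1})^\dagger d_{\bar k}^{\E_1,H_1}=\Det'(d_{\bar k}^{\E_2,H_2})^\dagger d_{\bar k}^{\E_2,H_2}$ for $k=0,1$.

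Finally, the unitary equivalence of Laplacians identifies their kernels, giving precisely the canonical isomorphism $H^\bullet(X_1,\E_1,H_1)\cong H^\bullet(X_2,\E_2,H_2)$ stated in the proposition, and transports orthonormal bases to orthonormal bases, hence $\eta_{\bar k}^{\E_1,H_1}\mapsto\eta_{\bar k}^{\E_2,H_2}$. Plugging into (\ref{def-tor}) yields the equality of torsions. In odd dimension one may appeal to Theorem~\ref{thm:indept} to drop the requirement that the metrics on the two sides be matched. The main obstacle is the bookkeeping in the Frobenius-reciprocity identification and, in particular, checking that integration on $X_1$ corresponds under the pushforward to integration on $X_2$ with the induced Hermitian structure on $p_*\E_1=\E_2$, so that no stray factor of $m$ slips into the comparison of scalar products; granted this, the remainder is a direct $\ZA_2$-graded adaptation of the classical Ray--Singer argument.
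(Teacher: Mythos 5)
Your proposal is correct and follows essentially the same route as the paper: choose Riemannian and Hermitian metrics compatible under pullback to the common universal cover (using Theorem~\ref{thm:indept} to justify this choice), observe that the Frobenius-reciprocity/pushforward identification $\Omega^\bullet(X_1,\E_1)\cong\Omega^\bullet(X_2,\E_2)$ is then an isometry intertwining the twisted differentials, and conclude that the partial Laplacians have the same spectrum and the volume elements correspond. The paper compresses the identification step to a citation of Ray--Singer's Theorem~2.6; your write-up simply makes that step explicit (including the derivation $p^*H_2=H_1$ and the remark about normalizing the induced Hermitian metric so no stray covering-degree factor enters the scalar product), which are exactly the points Ray--Singer's proof handles.
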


\begin{proof}
By Theorem~\ref{thm:indept}, we can choose the Riemannian metrics
on $X_1$ and $X_2$ so that they pull-back to the same metric on the 
universal covering and the Hermitian metrics on $\E_1$ and $\E_2$ 
associated to metric on the space of representation $\rho_1$ and 
the induced metric on the space of representation $\rho_2$.
Then the canonical isomorphism 
$\Omega^\bullet(X_1,\E_1,H_1)\cong\Omega^\bullet(X_2,\E_2,H_2)$
is an isometry.
Following the proof Theorem~2.6 in \cite{RS}, we deduce that the
spectrums, and hence the regularized determinants of 
$(d_{\bar k}^{\E_1})^\dagger d_{\bar k}^{\E_1}$ and 
$(d_{\bar k}^{\E_2})^\dagger d_{\bar k}^{\E_2}$ coincide.
The volume elements of $H^\bullet(X_1,\E_1,H_1)$ and 
$H^\bullet(X_2,\E_2,H_2)$ also coincide under the isomorphism.
\end{proof}

\begin{proposition}
In addition to the conditions of Proposition~\ref{prop:ku}, assume that both
$X_1$ and $X_2$ are compact manifolds.
Then under the canonical identification
\begin{align}
\det H^\bullet(X_1\times X_2,&\E_1\boxtimes\E_2,H_1\boxplus H_2) \nno
&\cong(\det H^\bullet(X_1,\E_1,H_1))^{\otimes\chi(X_2,\E_2)}\otimes
  (\det H^\bullet(X_2,\E_2,H_2))^{\otimes\chi(X_1,\E_1)} \nonumber
\end{align}
induced by the K\"unneth isomorphism, we have
$$
\tau(X_1\times X_2,\E_1\boxtimes\E_2,H_1\boxplus H_2)=
\tau(X_1,\E_1,H_1)^{\otimes\chi(X_2,\E_2)}\otimes
\tau(X_2,\E_2,H_2)^{\otimes\chi(X_1,\E_1)}.
$$
\end{proposition}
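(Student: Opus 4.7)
The plan is to adapt the classical Ray-Singer proof of multiplicativity of analytic torsion under products to the present $\mathbb{Z}_2$-graded setting. By Theorem~\ref{thm:indept}, we may work with product Riemannian and Hermitian metrics on $X_1\times X_2$ and $\E_1\boxtimes\E_2$. With this choice, the Hilbert space $C^{\bar k}(X_1\times X_2,\E_1\boxtimes\E_2)$ is isometric to the graded tensor product of the factor complexes, and the twisted differential factorizes as $d^{\mathrm{prod}}=d_1\otimes 1+\sigma_1\otimes d_2$, where $\sigma_1$ denotes the $\mathbb{Z}_2$-grading operator on $C(X_1,\E_1)$. Taking adjoints gives $d^{\mathrm{prod},\dagger}=d_1^\dagger\otimes 1+\sigma_1\otimes d_2^\dagger$, and the total Laplacian splits as $\Delta^{\mathrm{prod}}=\Delta_1\otimes 1+1\otimes\Delta_2$.

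The heart of the argument is to establish the identity
\[  \zeta_P^{\mathrm{prod}}(s)=\chi(X_2,\E_2)\,\zeta_P^{X_1}(s)+\chi(X_1,\E_1)\,\zeta_P^{X_2}(s),  \]
where $\zeta_P^{Y}(s):=\zeta(s,d_{\bar 0}^{Y,\dagger}d_{\bar 0}^Y)-\zeta(s,d_{\bar 1}^{Y,\dagger}d_{\bar 1}^Y)$; its derivative at $s=0$ is the logarithm of the determinant ratio in (\ref{def-tor}). To prove the identity, apply the Hodge decomposition $C(X_i,\E_i)=\mathcal{H}_i\oplus\mathrm{im}(d_i^\dagger)\oplus\mathrm{im}(d_i)$ on each factor and split $C^{\mathrm{prod}}$ into nine blocks. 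The block $\mathcal{H}_1\otimes\mathcal{H}_2$ equals $\ker\Delta^{\mathrm{prod}}$ and contributes nothing. For $\mathcal{H}_1\otimes(\mathrm{im}(d_2^\dagger)\oplus\mathrm{im}(d_2))$, each triple $(\alpha,v,\mu)$ with $\alpha\in\mathcal{H}_1^{\bar a}$ and $v$ a unit eigenvector of $\Delta_2$ in $\mathrm{im}(d_2^\dagger)\cap C_2^{\bar b}$ of eigenvalue $\mu$ spans a two-dimensional subcomplex on which $d^{\mathrm{prod},\dagger}d^{\mathrm{prod}}$ has non-zero eigenvalue $\mu$ in grade $\overline{a+b}$; the resulting contribution $(-1)^{a+b}\mu^{-s}$, summed first over $\alpha$ and then over $v$, yields $\chi(X_1,\E_1)\,\zeta_P^{X_2}(s)$. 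The symmetric block $(\mathrm{im}(d_1^\dagger)\oplus\mathrm{im}(d_1))\otimes\mathcal{H}_2$ contributes $\chi(X_2,\E_2)\,\zeta_P^{X_1}(s)$.

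The remaining block decomposes into four-dimensional subcomplexes spanned by $\{v_1\otimes v_2,\,v_1\otimes d_2 v_2,\,d_1 v_1\otimes v_2,\,d_1 v_1\otimes d_2 v_2\}$ for eigenpairs with $\Delta_i v_i=\lambda_i v_i$, $v_i\in\mathrm{im}(d_i^\dagger)$; the common eigenvalue of $\Delta^{\mathrm{prod}}$ on this subspace is $\lambda_1+\lambda_2$. A direct computation in the orthonormalized basis, tracking the sign $(-1)^a$ introduced by $\sigma_1$, shows that $d^{\mathrm{prod},\dagger}d^{\mathrm{prod}}$ has non-zero eigenvalue $\lambda_1+\lambda_2$ with multiplicity one in each of the two $\mathbb{Z}_2$-grading components, so the signed contribution to $\zeta_P^{\mathrm{prod}}(s)$ vanishes identically. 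This is where the main combinatorial care is needed, and it is the principal obstacle: the sign analysis must confirm that the cancellation is exact rather than leaving a residual term.

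Finally, under the Künneth identification of determinant lines of Proposition~\ref{prop:ku}, the product Hodge volume element $\eta_{\bar 0}^{\mathrm{prod}}\otimes(\eta_{\bar 1}^{\mathrm{prod}})^{-1}$ corresponds to $[\eta_{\bar 0}^{X_1}\otimes(\eta_{\bar 1}^{X_1})^{-1}]^{\otimes\chi(X_2,\E_2)}\otimes[\eta_{\bar 0}^{X_2}\otimes(\eta_{\bar 1}^{X_2})^{-1}]^{\otimes\chi(X_1,\E_1)}$ by elementary linear algebra applied to the tensor product basis of harmonic representatives. Combined with the zeta-function identity above, this yields the stated factorization of $\tau(X_1\times X_2,\E_1\boxtimes\E_2,H_1\boxplus H_2)$.
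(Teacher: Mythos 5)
Your proposal is correct and follows the same overall strategy as the paper: choose product metrics, use the factorization of the twisted Laplacian on the graded tensor product, show that the ``fully non-harmonic'' part of the spectrum contributes nothing to the signed zeta difference, and observe that the surviving terms from harmonic-tensor-nonharmonic pieces produce the Euler characteristic weights, with the volume elements matching under the K\"unneth identification. The main point of difference is in the bookkeeping of the cancellation. The paper fixes a pair of positive eigenvalues $(\lambda_1,\lambda_2)$, classifies each according to whether it lies in $\spec\I$ or $\spec\II$ (i.e.\ whether the eigenform sits in $\im(d^\dagger)$ or $\im(d)$), and runs through four cases, constructing explicit linear combinations (e.g.\ $\omega_1\otimes\omega_2\mp\lambda_i^{-1}d\omega_1\otimes d^\dagger\omega_2$) to show each pair contributes once to $\spec\I$ and once to $\spec\II$ of the product. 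Your version instead fixes eigenvector pairs $v_i\in\im(d_i^\dagger)$ and decomposes the whole non-harmonic block into the four-dimensional subcomplexes $\{v_1\otimes v_2,\ v_1\otimes d_2v_2,\ d_1v_1\otimes v_2,\ d_1v_1\otimes d_2v_2\}$, reading off directly that $d^{\mathrm{prod},\dagger}d^{\mathrm{prod}}$ has the positive eigenvalue $\lambda_1+\lambda_2$ with multiplicity exactly one in each parity. The two bookkeepings are equivalent, but yours collapses the paper's four cases into a single uniform $2\times2$ matrix computation, which is somewhat tidier; the computation you flag as ``the principal obstacle'' does indeed close without a residual term, exactly as you anticipate (the Koszul sign $(-1)^a$ squares away in $d^{\mathrm{prod},\dagger}d^{\mathrm{prod}}$). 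One small thing worth making explicit in a final write-up: the claim that the four-dimensional subcomplexes exhaust $(\im(d_1^\dagger)\oplus\im(d_1))\otimes(\im(d_2^\dagger)\oplus\im(d_2))$ uses that $d_i$ maps an eigenbasis of $\Delta_i|_{\im(d_i^\dagger)}$ bijectively onto an eigenbasis of $\Delta_i|_{\im(d_i)}$, which is standard Hodge theory but should be stated.
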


\begin{proof}
For $k=0,1$, the space $\Omega^{\bar k}(X_1\times X_2,\E_1\boxtimes\E_2)$
has a dense subspace that is isomorphic to $\bigoplus_{l=0,1}
\Omega^{\bar l}(X_1,\E_1)\otimes\Omega^{\overline{k-l}}(X_2,\E_2)$.
Under this identification, the operators 
$d^{\E_1\boxtimes\E_2}=d^{\E_1}\otimes1+1\otimes d^{\E_2}$,
$(d^{\E_1\boxtimes\E_2})^\dagger=
(d^{\E_1})^\dagger\otimes1+1\otimes(d^{\E_2})^\dagger$,
$\Delta^{\E_1\boxtimes\E_2}=\Delta^{\E_1}\otimes1+1\otimes\Delta^{\E_2}$
on $\Omega^\bullet(X_1\times X_2,\E_1\boxtimes\E_2)$.
(We omit the superscript $H$ of the operators.)
Therefore an eigenvalue of $\Delta^{\E_1\boxtimes\E_2}$ is the sum of those
of $\Delta^{\E_1}$ and $\Delta^{\E_2}$ and the corresponding eigenspace is
a tensor product of those of the latters.
(All the eigenspaces are finite dimensional by the compactness assumption.)
In particular, $\ker(\Delta^{\E_1\boxtimes\E_2})=
\ker(\Delta^{\E_1})\otimes\ker(\Delta^{\E_2})$;
this is another proof of the K\"unneth isomorphism (Proposition~\ref{prop:ku})
when $X_1,X_2$ are both compact.
In addition, we have
$$
\spec'(\Delta_{\bar 0}^{\E_1\boxtimes\E_2})=
\{\lambda_1+\lambda_2>0\,|\,\lambda_i\in\spec(\Delta_{\bar 0}^{\E_i})
\text{ or }\lambda_i\in\spec(\Delta_{\bar 1}^{\E_i}),\;i=1,2\}
$$
and therefore by (\ref{part-zeta}), we get
\begin{align}\label{sum}
&\sum_{k=0,1}(-1)^k\zeta\big(s,(d_{\bar k}^{\E_1\boxtimes\E_2})^\dagger
   d_{\bar k}^{\E_1\boxtimes\E_2}\big) \\
=& {\small\mbox{$\displaystyle\sum_{k=0,1}\left(
 \mathop{\sum_{\lambda_i\in\spec(\Delta_{\bar k}^{\E_i}),i=1,2}}
 _{\lambda_1+\lambda_2\in\spec\I(\Delta_{\bar 0}^{\E_1\boxtimes\E_2})} 
 \!\!\!\!\!\frac{m\I(\lambda_1+\lambda_2,\Delta_{\bar 0}^{\E_1\boxtimes\E_2})}
 {(\lambda_1+\lambda_2)^s}\,
 -\!\!\!\!\!\mathop{\sum_{\lambda_i\in\spec(\Delta_{\bar k}^{\E_i}),i=1,2}}
 _{\lambda_1+\lambda_2\in\spec\II(\Delta_{\bar 0}^{\E_1\boxtimes\E_2})}
 \!\!\!\!\!\frac{m\II(\lambda_1+\lambda_2,\Delta_{\bar 0}^{\E_1\boxtimes\E_2})}
 {(\lambda_1+\lambda_2)^s}\right)$}}.               \nonumber
\end{align}

We now show that total contribution to the sum (\ref{sum}) from
$(\lambda_1,\lambda_2)$ vanishes if both $\lambda_1,\lambda_2>0$.
We consider four cases.\\
1. If both $\lambda_i\in\spec\I(\Delta_{\bar 0}^{\E_i})$, then there is
a non-zero $\omega_i\in\im(d_{\bar 0}^{\E_i})^\dagger$ such that
$\Delta_{\bar 0}^{\E_i}\omega_i=\lambda_i\omega_i$ for each $i=1,2$.
It is easy to see that 
$\omega_1\otimes\omega_2\in\im(d_{\bar 0}^{\E_1\boxtimes\E_2})^\dagger$
is an eigenvector of $\Delta_{\bar 0}^{\E_1\boxtimes\E_2}$ with eigenvalue
$\lambda_1+\lambda_2\in\spec\I(\Delta_{\bar 0}^{\E_1\boxtimes\E_2})$.
On the other hand, $d_{\bar 0}^{\E_i}\omega_i$ is a (non-zero) 
eigenvector of $\Delta_{\bar i}^{\E_i}$ with eigenvalue $\lambda_i$.
Hence $\lambda_i\in\spec'(\Delta_{\bar 1}^{\E_i})$.
Since $d_{\bar 0}^{\E_1}\omega_1\otimes d_{\bar 0}^{\E_2}\omega_2\in
\im(d_{\bar1}^{\E_1\boxtimes\E_2})$, we also have
$\lambda_1+\lambda_2\in\spec\II(\Delta_{\bar 0}^{\E_1\boxtimes\E_2})$.
So the contribution of $(\lambda_1,\lambda_2)\in
\spec'(\Delta_{\bar 0}^{\E_1})\times\spec'(\Delta_{\bar 0}^{\E_2})$
cancels that of $(\lambda_1,\lambda_2)\in
\spec'(\Delta_{\bar 1}^{\E_1})\times\spec'(\Delta_{\bar 1}^{\E_2})$.\\
2. Similarly, if both $\lambda_i\in\spec\II(\Delta_{\bar 0}^{\E_i})$
($i=1,2$), the corresponding contribution is also canceled.\\
3. If $\lambda_1\in\spec\I(\Delta_{\bar 0}^{\E_1})$ but
$\lambda_2\in\spec\II(\Delta_{\bar 0}^{\E_2})$, let $\omega_i$ ($i=1,2$)
be the corresponding eigenvectors of $\Delta_{\bar 0}^{\E_i}$.
Then $\omega_1\otimes\omega_2$ and 
$d_{\bar 0}^{\E_1}\omega_1\otimes(d_{\bar 1}^{\E_2})^\dagger\omega_2$ are
linearly independent eigenvectors of $\Delta_{\bar 0}^{\E_1\boxtimes\E_2}$
with the same eigenvalue $\lambda_1+\lambda_2$.
It is easy to see that one linear combination 
$\omega_1\otimes\omega_2-\lambda_1^{-1}d_{\bar 0}^{\E_1}\omega_1
\otimes(d_{\bar 1}^{\E_2})^\dagger\omega_2$
is in $\,\im(d_{\bar0}^{\E_1\boxtimes\E_2})^\dagger$, yielding
$\lambda_1+\lambda_2\in\spec\I(\Delta_{\bar 0}^{\E_1\boxtimes\E_2})$
while another (independent) combination $\omega_1\otimes\omega_2+\lambda_2^{-1}
d_{\bar 0}^{\E_1}\omega_1\otimes(d_{\bar 1}^{\E_2})^\dagger\omega_2$
is in $\,\im(d_{\bar1}^{\E_1\boxtimes\E_2})$, yielding
$\lambda_1+\lambda_2\in\spec\II(\Delta_{\bar 0}^{\E_1\boxtimes\E_2})$.
So the contributions of $(\lambda_1,\lambda_2)$ also cancel in this case.\\
4. The case $\lambda_1\in\spec\II(\Delta_{\bar 0}^{\E_1})$,
$\lambda_2\in\spec\I(\Delta_{\bar 0}^{\E_2})$ is similar. 

The non-zero contributions to (\ref{sum}) are thus from the subspaces
$\im(d_{\bar k}^{\E_1})^\dagger\otimes\ker(\Delta_{\bar l}^{\E_2})$,
$\ker(\Delta_{\bar l}^{\E_1})\otimes\im(d_{\bar k}^{\E_2})^\dagger\subset
\im(d_{\overline{k+l}}^{\E_1\boxtimes\E_2})^\dagger$, $k,l=0,1$.
Since $\dim\ker(\Delta_{\bar l}^{\E_i})=b_{\bar l}(X_i,\E_i,H_i)$, we have
\begin{align}
&\sum_{k=0,1}(-1)^k\zeta\big(s,(d_{\bar k}^{\E_1\boxtimes\E_2})^\dagger
   d_{\bar k}^{\E_1\boxtimes\E_2}\big)                          \nno
=&\sum_{k,l=0,1}(-1)^{k+l}\left(
  \sum_{\lambda_1\in\spec'((d_{\bar k}^{\E_1})^\dagger d_{\bar k}^{\E_1})}
  \!\!\frac{m\big(\lambda_1,(d_{\bar k}^{\E_1})^\dagger d_{\bar k}^{\E_1}\big)
  \,b_{\bar l}(X_2,\E_2,H_2)}{\lambda_1^s}\right.                \nno
 &\left.\quad\quad\quad\quad\quad\quad\quad\quad\quad
  +\sum_{\lambda_2\in\spec'((d_{\bar k}^{\E_2})^\dagger d_{\bar k}^{\E_2})}
  \!\!\frac{m\big(\lambda_2,(d_{\bar k}^{\E_2})^\dagger d_{\bar k}^{\E_2}\big)
  \,b_{\bar l}(X_1,\E_1,H_1)}{\lambda_2^s}\right)                \nno
&=\chi(X_2,\E_2)\sum_{k=0,1}(-1)^k
  \zeta\big(s,(d_{\bar k}^{\E_1})^\dagger d_{\bar k}^{\E_1}\big)
  +\chi(X_1,\E_1)\sum_{k=0,1}(-1)^k
  \zeta\big(s,(d_{\bar k}^{\E_2})^\dagger d_{\bar k}^{\E_2}\big)
  \nonumber
\end{align}
and therefore
$$
\frac{\Det'(d_{\bar 0}^{\E_1\boxtimes\E_2})^\dagger 
 d_{\bar 0}^{\E_1\boxtimes\E_2}}
{\Det'(d_{\bar 1}^{\E_1\boxtimes\E_2})^\dagger 
 d_{\bar 1}^{\E_1\boxtimes\E_2}}
=\left(\frac{\Det'(d_{\bar0}^{\E_1})^\dagger d_{\bar 0}^{\E_1}}
{\Det'(d_{\bar 1}^{\E_1})^\dagger d_{\bar1}^{\E_1}}\right)^{\chi(X_2,\E_2)}
\left(\frac{\Det'(d_{\bar0}^{\E_2})^\dagger d_{\bar 0}^{\E_2}}
{\Det'(d_{\bar 1}^{\E_2})^\dagger d_{\bar1}^{\E_2}}\right)^{\chi(X_1,\E_1)}.
$$
For the volume elements, we can choose
$$
\eta_{\bar k}^{\E_1\boxtimes\E_2}=\bigotimes_{l=0,1}
(\eta_{\bar l}^{\E_1})^{\otimes b_{\overline{k-l}}(X_2,\E_2,H_2)}\otimes
(\eta_{\bar l}^{\E_2})^{\otimes b_{\overline{k-l}}(X_1,\E_1,H_1)}
$$
and hence
$$
\eta_{\bar 0}^{\E_1\boxtimes\E_2}\otimes
(\eta_{\bar 1}^{\E_1\boxtimes\E_2})^{-1}
=(\eta_{\bar 0}^{\E_1}\otimes
(\eta_{\bar 1}^{\E_1})^{-1})^{\otimes\chi(X_2,\E_2)}
\otimes(\eta_{\bar 0}^{\E_2}\otimes
(\eta_{\bar 1}^{\E_2})^{-1})^{\otimes\chi(X_1,\E_1)}.
$$
\end{proof}

It would be interesting to establish the behavior of the analytic torsion
(form) under a general smooth fibration, analogous to \cite{BLo,DM,LST},
for the twisted de Rham or other $\ZA_2$-graded complexes.

\section{Calculations of analytic torsion and the simplicial analogue}
\label{sect:calc} 

\subsection{Analytic torsion when the flux is a top-degree form}\label{sect:n}
Recall that the twisted cohomology groups can be computed by the spectral
sequence in \S\ref{sect:twistedDR}.
In this process, each complex $(E_r^\bullet,\delta_r)$ is finite dimensional 
for $r\ge 2$ when $X$ is compact. 
The Knudsen-Mumford isomorphisms \cite{KM} 
$\det E_r^\bullet\cong\det E_{r+1}^\bullet$ for $r\ge2$ yield an isomorphism
\begin{equation}\label{km-isom}
\kappa\colon\det H^\bullet(X,\E)\to\det H^\bullet(X,\E,H) 
\end{equation}
since the spectral sequence converges to the twisted cohomology.  

\begin{proposition}\label{prop:3mfd}
Suppose $X$ is a compact oriented manifold of odd dimension and
$\E$ is a flat vector bundle associated to an orthogonal or unitary
representation of $\pi_1(X)$.
Assume $n=\dim X>1$ and let $H$ be an $n$-form on $X$.
Then
$$
\tau(X,\E,H)=\kappa(\tau(X,\E)).
$$
\end{proposition}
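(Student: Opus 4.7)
The plan exploits a drastic simplification available when $H$ is top-degree. Because $\dim X = n$, the operator $H\wedge\,\cdot$ annihilates any form of positive degree for dimensional reasons; hence $d^{\E,H}_{\bar1}=d^\E_{\bar1}$ coincides with the untwisted operator, while $d^{\E,H}_{\bar0}=d^\E_{\bar0}+K$, where $K\colon\Omega^0(X,\E)\to\Omega^n(X,\E)$, $s\mapsto H\wedge s$, is extended by zero off of $\Omega^0(X,\E)$. The entire analytic effect of the twist is therefore captured by this single ``corner'' perturbation.

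By Theorems~\ref{thm:indept} and \ref{thm:indept-H} I may freely choose the metrics and the representative of $[H]$. Since $H^n(X,\RE)\cong\RE$, I pick $g_X$ so that the harmonic representative of $[H]$ is a constant multiple of the Riemannian volume form, arranging $H=c\cdot\mathrm{vol}_X$ with $c\in\RE$. (If $c=0$ the statement reduces to Theorem~\ref{thm:indept-H} with a trivial $\kappa$.) Then $K=c\ast$ is (up to the scalar) the Hodge star, so $K^\dagger K=c^2$ on $\Omega^0(X,\E)$ and $KK^\dagger=c^2$ on $\Omega^n(X,\E)$; crucially, since the representation is orthogonal or unitary we have $\E\cong\E^*$, and Lemma~\ref{lemma:adj} gives the intertwining $K\,\Delta^\E_0=\Delta^\E_n\,K$. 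Decomposing $C^{\bar0}$ by de Rham degree, $(d^{\E,H}_{\bar0})^\dagger d^{\E,H}_{\bar0}$ agrees with $d_{2k}^\dagger d_{2k}$ on each $\Omega^{2k}(X,\E)$ for $0<2k<n-1$, and the only modification is confined to the corner block on $\Omega^0\oplus\Omega^{n-1}$, which equals $L^\dagger L$ with $L(s,\omega)=(ds,\,Hs+d\omega)$. In particular $\Det'(d^{\E,H}_{\bar0})^\dagger d^{\E,H}_{\bar0}=\Det'(L^\dagger L)\cdot\prod_{0<2k\le n-3}\Det'(d_{2k}^\dagger d_{2k})$, while $\Det'(d^{\E,H}_{\bar1})^\dagger d^{\E,H}_{\bar1}=\Det'(d_{\bar1}^\dagger d_{\bar1})$ is unchanged.

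Next, the spectral sequence of \S\ref{sect:twistedDR} degenerates sharply: only $\delta_n=\cup[H]\colon H^0(X,\E)\to H^n(X,\E)$ is nonzero among the higher differentials, since for $p>0$ the target $H^{p+n}(X,\E)$ vanishes. Hence $E_\infty$ produces $H^{\bar0}(X,\E,H)\cong\ker(\cup[H])\oplus\bigoplus_{0<2k<n}H^{2k}(X,\E)$ and $H^{\bar1}(X,\E,H)\cong\bigoplus_{0<2k+1<n}H^{2k+1}(X,\E)\oplus\mathrm{coker}(\cup[H])$, and the Knudsen--Mumford identification $\kappa$ is precisely the determinant line map induced by the exact sequence $0\to\ker(\cup[H])\to H^0(X,\E)\xrightarrow{\cup[H]}H^n(X,\E)\to\mathrm{coker}(\cup[H])\to 0$.

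The main obstacle, and the technical heart of the proof, is to show that $\Det'(L^\dagger L)$ together with the change in the Hodge volume elements $\eta^{\E,H}_{\bar0}\otimes(\eta^{\E,H}_{\bar1})^{-1}$ reproduce exactly the determinant of $\cup[H]$ demanded by $\kappa$. I would attack this by a one-parameter Duhamel argument in the spirit of Lemmas~\ref{lem:var-det} and \ref{lem:var-det-H}: set $L_t=L_0+tK$ with $L_0(s,\omega)=(ds,d\omega)$ for $t\in[0,1]$ and compute $\partial_t\log\Det'(L_t^\dagger L_t)$. Because $K$ intertwines $\Delta^\E_0$ with $\Delta^\E_n$ on the orthogonal complements of their kernels, the variation contributions from the nonzero spectrum cancel pairwise, and the residual variation lives in the finite-dimensional subspace $\ker\Delta^\E_0\oplus\ker\Delta^\E_n\cong H^0(X,\E)\oplus H^n(X,\E)$ where $L_t$ restricts to the finite-rank family $t\cdot(\cup[H])$. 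Integrating from $0$ to $1$ converts the analytic determinant variation into the finite-dimensional determinant encoded by $\kappa$, completing the identification $\tau(X,\E,H)=\kappa(\tau(X,\E))$.
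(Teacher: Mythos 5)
Your reduction of the problem is exactly the one the paper uses: with a metric making $H$ a constant multiple of the volume form, the twist is concentrated in the corner block $L\colon C^0\oplus C^{n-1}\to C^1\oplus C^n$, $(s,\omega)\mapsto(ds, Hs+d\omega)$, and everything hinges on proving $\Det'(L^\dagger L)=[H]^{2b_0}\,\Det'd_0^\dagger d_0\,\Det'd_{n-1}^\dagger d_{n-1}$ (Lemma~\ref{lem:KV}). Your description of the spectral sequence, the map $\kappa$, and the intertwining $K\Delta_0^\E=\Delta_n^\E K$ are all correct.

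The gap is in the final step. You propose to set $L_t=L_0+tK$ and compute $\partial_t\log\Det'(L_t^\dagger L_t)$ by a Duhamel argument as in \S\ref{sect:var}. But this is not a deformation within a fixed cohomology class; it changes $[H]$ from $0$ to a nonzero value, and the kernel of $L_t^\dagger L_t$ jumps at $t=0$. Concretely, for $t>0$ one has $\ker L_t=\{0\}\oplus\ker d_{n-1}$, whereas $\ker L_0=\ker d_0\oplus\ker d_{n-1}$; the harmonic $0$-forms drop out of the kernel the instant $t\neq 0$, and on $\ker\Delta_0$ the operator $L_t^\dagger L_t$ acts as multiplication by $t^2[H]^2$. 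Thus $\Det'(L_t^\dagger L_t)$ contains a factor $(t^2[H]^2)^{b_0}$ that tends to $0$ as $t\to0$, while $\Det'(L_0^\dagger L_0)$ is a finite nonzero number; equivalently $\partial_t\log\Det'(L_t^\dagger L_t)\sim 2b_0/t$ near $t=0$, and $\int_0^1$ of this diverges. The deformation argument as stated therefore cannot be integrated from $0$ to $1$ without a delicate separation of the small-eigenvalue contribution and a matching of boundary terms, and once you do that separation carefully you are essentially re-proving the determinant identity rather than deducing it. The cancellation ``from the nonzero spectrum'' you invoke also needs justification, since $K$ maps $C^0\to C^n$ while $L_t^\dagger L_t$ lives on $C^0\oplus C^{n-1}$, so the intertwining of $\Delta_0$ with $\Delta_n$ does not immediately translate into a pairwise cancellation in the variation of this particular operator.

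The paper's proof of Lemma~\ref{lem:KV} avoids the deformation entirely. It replaces $\Delta_{n-1}$ by the invertible $\tilde\Delta_{n-1}=\Delta_{n-1}+Q_{n-1}$, performs an explicit block-triangular (LU-type) factorization of the corner operator into odd pseudodifferential factors in the sense of Kontsevich--Vishik, shows that the unipotent triangular factors have determinant $1$, and then uses the multiplicativity of the regularized determinant for odd ${\Psi}$DOs of non-negative order on an odd-dimensional manifold. This directly produces the factor $[H]^{2b_0}$ on $\ker\Delta_0$ and the product of the two partial determinants, with no continuity-in-$t$ issue to confront. If you wish to keep a deformation flavor, you would need to work at fixed $t\neq0$ and compare to a reference, or first split off the finite-dimensional $b_0\times b_0$ block and argue for the remainder by a separate mechanism; as written, the jump at $t=0$ is a genuine obstruction.
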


\begin{proof}
By Theorem~\ref{thm:indept}, we can choose a Riemannian metric on $X$ 
so that $\vol(X)=1$; let $\nu=*\,1$ be the volume form on $X$.
By Theorem~\ref{thm:indept-H}, we can also assume that $H=[H]\,\nu$,
where $[H]\in H^n(X,\RE)\cong\RE$ is a real number.
If $[H]=0$, then the statement is trivial; we assume that $[H]\ne0$.
Since $\E$ is a flat vector bundle associated to an orthogonal or unitary
representation, we have $H^0(X,\E)\cong\overline{H^n(X,\E)}^*$;
let $b_0:=\dim H^0(X,\E)=\dim H^n(X,\E)$. 
Let $\eta_i$ be the unit volume element of $H^i(X,\E)$ for $0\le i\le n$.
The metric-independent isomorphism (\ref{km-isom}) is given by
$$
\kappa\colon\bigotimes_{i=0}^n\eta_i^{(-1)^i}\mapsto
|[H]|^{b_0}\,\eta_{\bar0}\otimes\eta_{\bar1}^{-1}.
$$
Let $d_i$ ($0\le i\le n-1$) be the differential on $C^i=\Omega^i(X,\E)$.
Then $d_{\bar k}$ is equal to ${d_0\quad\;\,0\;\choose H\;\;d_{n-1}}$ on
$C^0\oplus C^{n-1}$, $d_i$ on $C^i$ for $i\le i\le n-2$, and $0$ on $C^n$.
Here $H$ also stands for taking wedge product with $H$.
The Ray-Singer torsion is 
$$
\tau(X,\E)=\prod_{i=0}^{n-1}(\Det'd_i^\dagger d_i)^{(-1)^i/2}
\bigotimes_{i=0}^n\eta_i^{(-1)^i}
$$
while the torsion for the twisted de Rham complex is
$$
\tau(X,\E,H)=\Det'{d_0^\dagger d_0+H^\dagger H\quad H^\dagger d_{n-1}
\choose\quad\;d_{n-1}^\dagger H\quad\;d_{n-1}^\dagger d_{n-1}}^{1/2}
\,\prod_{i=1}^{n-2}(\Det'd_i^\dagger d_i)^{(-1)^i/2}
\;\eta_{\bar0}\otimes\eta_{\bar1}^{-1}.
$$
The result follows from the following Lemma.
\end{proof}

\begin{lemma}\label{lem:KV}
Under the above assumptions, we have
$$
\Det'{d_0^\dagger d_0+H^\dagger H\quad H^\dagger d_{n-1}\choose
\quad\;d_{n-1}^\dagger H\quad\;d_{n-1}^\dagger d_{n-1}}
=[H]^{2b_0}\,\Det'd_0^\dagger d_0\,\Det'd_{n-1}^\dagger d_{n-1}.
$$
\end{lemma}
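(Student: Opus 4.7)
The plan is to recognize the block matrix as $M = D^\dagger D$ where $D \colon C^0 \oplus C^{n-1} \to C^1 \oplus C^n$ is given by $D(f, g) = (d_0 f,\, H \wedge f + d_{n-1} g)$, and then to compute $\Det' M$ by explicit spectral decomposition. The kernel of $M$ equals $\ker D = \{0\} \oplus \ker(d_{n-1})$, so $\Det' M$ is computed on the orthogonal complement $H^0 \oplus C^0_\perp \oplus \im(d_{n-1}^\dagger)$, where we abbreviate $C^0_\perp := \im(d_0^\dagger)$. A direct check shows $M$ preserves $H^0$: for $\phi_0 \in H^0$ one has $M(\phi_0, 0) = ([H]^2 \phi_0, 0)$, using $d_{n-1}^\dagger(\phi_0 \nu) = 0$, which holds because $\phi_0$ is parallel (this is where the unitary/orthogonal hypothesis enters essentially). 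Hence $M|_{H^0} = [H]^2 \cdot I$, contributing the factor $[H]^{2 b_0}$ to $\Det' M$.

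On the remaining subspace $C^0_\perp \oplus \im(d_{n-1}^\dagger)$, I would work with an orthonormal eigenbasis $\{\phi_\alpha\}$ of $d_0^\dagger d_0$ on $C^0_\perp$ with eigenvalues $\mu_\alpha > 0$; in the unitary case, $T_\alpha := d_{n-1}^\dagger(\phi_\alpha \nu)/\sqrt{\mu_\alpha}$ gives an orthonormal eigenbasis of $d_{n-1}^\dagger d_{n-1}$ on $\im(d_{n-1}^\dagger)$ with the same eigenvalues (via the Hodge duality $\ast\Delta_n = \Delta_0 \ast$). Each two-dimensional subspace $W_\alpha := \mathrm{span}(\phi_\alpha, T_\alpha)$ is invariant under $M$, and in this orthonormal basis $M|_{W_\alpha}$ carries the symmetric matrix
\[
\begin{pmatrix} \mu_\alpha + [H]^2 & [H]\sqrt{\mu_\alpha} \\ [H]\sqrt{\mu_\alpha} & \mu_\alpha \end{pmatrix}
\]
of determinant $\mu_\alpha^2$ and trace $2\mu_\alpha + [H]^2$. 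Writing its two eigenvalues as $\lambda_\alpha^\pm = \mu_\alpha e^{\pm \theta_\alpha}$, the zeta function of $M$ splits as
\[
\zeta(s, M) = b_0 ([H]^2)^{-s} + 2 \sum_\alpha \mu_\alpha^{-s} \cosh(s \theta_\alpha) = b_0 ([H]^2)^{-s} + 2 \zeta(s, d_0^\dagger d_0) + 2 \sum_{k \ge 1} \frac{s^{2k}}{(2k)!} F_k(s),
\]
where $F_k(s) := \sum_\alpha \mu_\alpha^{-s} \theta_\alpha^{2k}$. Since $\theta_\alpha = O(|[H]|/\sqrt{\mu_\alpha})$, each $F_k(s)$ reduces (up to lower-order shifts) to a shifted zeta function of $d_0^\dagger d_0$ at $s + k$, whose poles for odd $n$ lie only at half-integers; hence $F_k$ is holomorphic at $s = 0$, and the prefactor $s^{2k}$ with $k \ge 1$ annihilates both value and derivative of that term at $s = 0$. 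Therefore $\zeta'(0, M) = -b_0 \log([H]^2) + 2\zeta'(0, d_0^\dagger d_0)$, and since $\Det' d_0^\dagger d_0 = \Det' d_{n-1}^\dagger d_{n-1}$ in the unitary case, exponentiation yields the claim.

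The main obstacle is this last analytic step: one must justify interchanging the Taylor expansion of $\cosh(s\theta_\alpha)$ with the sum over $\alpha$ and verify that each $F_k(s)$ admits a meromorphic continuation that is holomorphic at $s = 0$. The key input is the oddness of $\dim X$, which controls the pole structure of the auxiliary zeta functions of $d_0^\dagger d_0$, and is used here in the same spirit as in Theorem~\ref{zeta-hol} and Corollary~\ref{var-zeta0}.
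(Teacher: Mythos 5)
Your approach is genuinely different from the paper's. The paper factorizes the block operator as a product of "unipotent" upper- and lower-triangular pseudo-differential operators and a block-diagonal one, then invokes the Kontsevich--Vishik theory of determinants of \emph{odd} pseudo-differential operators on odd-dimensional manifolds to conclude (i) that determinants are multiplicative, and (ii) that the triangular factors have determinant $1$. What you do is an explicit spectral decomposition: you identify the kernel, pick out the $[H]^2$-eigenspace $H^0\oplus\{0\}$, and pair each nonzero eigenvector $\phi_\alpha$ of $d_0^\dagger d_0$ with its Hodge dual $T_\alpha=d_{n-1}^\dagger(\phi_\alpha\nu)/\sqrt{\mu_\alpha}$, obtaining invariant $2$-planes $W_\alpha$ on which $M$ acts by the symmetric matrix you write down. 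In fact the paper \emph{states} this spectral picture, but only as a post-proof heuristic; you have essentially elevated that heuristic to a proof strategy, and the decomposition and the $2\times2$ matrix are correct: the unitary/orthogonal hypothesis enters exactly where you say, to ensure harmonic sections are parallel so that $d_{n-1}^\dagger(\phi_0\nu)=0$.

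Where your argument is incomplete is the analytic step you flag yourself: passing from the formal identity
\[
\zeta(s,M)=b_0[H]^{-2s}+2\zeta(s,d_0^\dagger d_0)+2\sum_{k\ge1}\frac{s^{2k}}{(2k)!}F_k(s),
\qquad F_k(s)=\sum_\alpha\mu_\alpha^{-s}\theta_\alpha^{2k},
\]
valid for $\Re(s)$ large, to the conclusion that the tail contributes nothing to $\zeta(0,M)$ and $\zeta'(0,M)$. As written, your claim that ``each $F_k$ reduces to a shifted zeta'' is too vague, and you do not control the infinite sum over $k$ near $s=0$. Both points are fixable but require explicit estimates. First, since $\theta_\alpha^2=g(\mu_\alpha)$ with $g(\mu)=\bigl(\mathrm{arccosh}(1+[H]^2/2\mu)\bigr)^2$ analytic in $\mu^{-1}$ near $0$ with $g(\mu)=[H]^2\mu^{-1}+O(\mu^{-2})$, one should subtract finitely many terms of the asymptotic expansion of $g(\mu_\alpha)^k$ in powers of $\mu_\alpha^{-1}$, writing $F_k(s)$ as a finite linear combination of $\zeta(s+k+j,\Delta_0)$ ($j\ge 0$) plus a remainder holomorphic near $s=0$. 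Since $\Delta_0$ is a second-order \emph{differential} operator on odd-dimensional $X$, the odd heat coefficients vanish and $\zeta(\cdot,\Delta_0)$ has poles only at half-odd integers, so $F_k$ is indeed holomorphic at $s=0$. Second, rather than exchanging the Taylor expansion with the sum over $\alpha$ termwise for all $k$, one should truncate: fix $J>n/2$ and write
\[
G(s):=\sum_\alpha\mu_\alpha^{-s}\cosh(s\theta_\alpha)
=\zeta(s,\Delta_0)+\sum_{k=1}^{J}\frac{s^{2k}}{(2k)!}F_k(s)+R_J(s),
\]
where $R_J(s)=\sum_\alpha\mu_\alpha^{-s}\bigl[\cosh(s\theta_\alpha)-\sum_{k\le J}\frac{(s\theta_\alpha)^{2k}}{(2k)!}\bigr]$. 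The bracket is $O(s^{2J+2}\mu_\alpha^{-J-1})$ uniformly for $|s|\le 1$, so $R_J(s)$ converges uniformly for $\Re(s)>n/2-J-1$ (hence near $s=0$), is holomorphic there, and vanishes to order $2J+2\ge 2$ at $s=0$. With this truncation the only contributions to $G(0)$ and $G'(0)$ come from $\zeta(s,\Delta_0)$, giving $\zeta'(0,M)=-2b_0\log|[H]|+2\zeta'(0,\Delta_0)$, and the stated identity then follows using $\Det'd_{n-1}^\dagger d_{n-1}=\Det'\Delta_0$ by Hodge duality.

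So: your route is correct in outline, more elementary than the paper's, and does not require the Kontsevich--Vishik machinery; but the step you call ``the main obstacle'' is a genuine gap and needs the truncation argument sketched above. The trade-off is that the paper's proof is short at the price of a substantial black box, whereas yours is self-contained but needs the explicit analytic control. Both hinge on $n$ being odd (in the paper: to make the operators odd-class so their determinants factorize and the unipotent factors have determinant $1$; in yours: to kill the relevant poles of $\zeta(\cdot,\Delta_0)$ at integers).
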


\begin{proof}
Let $Q_i$ be the orthogonal projection from (the completion of) $C^i$
onto $\ker(\Delta_i)$, $0\le i\le n$.
Set $\tilde\Delta_{n-1}=\Delta_{n-1}+Q_{n-1}$.
Then 
$$
\Det'{d_0^\dagger d_0+H^\dagger H\quad H^\dagger d_{n-1}\choose
\quad\;d_{n-1}^\dagger H\quad\;d_{n-1}^\dagger d_{n-1}}
=\Det'{\Delta_0+[H]^2\quad H^\dagger d_{n-1}\choose
\;\;d_{n-1}^\dagger H\quad\quad\tilde\Delta_{n-1}\;\;}
\,(\Det'd_{n-2}^\dagger d_{n-2})^{-1}.
$$
Since $d_{n-1}\tilde\Delta_{n-1}^{-1}d_{n-1}^\dagger=1-Q_n$ and hence 
$H^\dagger d_{n-1}\tilde\Delta_{n-1}^{-1}d_{n-1}^\dagger H=[H]^2(1-Q_0)$,
we have
\begin{align}
{\Delta_0+[H]^2\quad H^\dagger d_{n-1}\choose
\;\;d_{n-1}^\dagger H\quad\quad\tilde\Delta_{n-1}\;\;}
&={\Delta_0+[H]^2Q_0\quad H^\dagger d_{n-1}
\choose\quad\;\;0\quad\quad\quad\quad\tilde\Delta_{n-1}}
{\quad\quad 1\quad\quad\quad\;\;0
\choose\tilde\Delta_{n-1}^{-1}d_{n-1}^\dagger H\quad 1}      \nno
&={1\quad H^\dagger d_{n-1}\tilde\Delta_{n-1}^{-1}\choose
0\quad\quad\quad\;\;1\quad\quad\;\,}
{\Delta_0+[H]^2Q_0\quad0\;\;\choose
\quad\quad0\quad\quad\quad\tilde\Delta_{n-1}}
{\quad\quad 1\quad\quad\quad\;\;0
\choose\tilde\Delta_{n-1}^{-1}d_{n-1}^\dagger H\quad 1}.     \nonumber
\end{align}
We note that $n=\dim X$ is odd.
Since the elliptic pseudo-differential operators 
$$
{\Delta_0+[H]^2\quad H^\dagger d_{n-1}\choose
\;\;d_{n-1}^\dagger H\quad\quad\tilde\Delta_{n-1}\;\;},\quad
{\Delta_0+[H]^2Q_0\quad H^\dagger d_{n-1}
\choose\quad\;\;0\quad\quad\quad\quad\tilde\Delta_{n-1}},\quad
{\Delta_0+[H]^2Q_0\quad0\;\;\choose
\quad\quad0\quad\quad\quad\tilde\Delta_{n-1}}
$$
of order $2$ on $C^0\oplus C^{n-1}$ are odd in the sense of Kontsevich and
Vishik \cite{KV} and are invertible, the pseudo-differential operators
$$
{1\quad H^\dagger d_{n-1}\tilde\Delta_{n-1}^{-1}\choose
0\quad\quad\quad\;\;1\quad\quad\;\,},\quad
{\quad\quad 1\quad\quad\quad\;\;0
\choose\tilde\Delta_{n-1}^{-1}d_{n-1}^\dagger H\quad 1}
$$
of order $0$ are also odd and their determinants are defined \cite{KV};
we will denote these determinants by $\det$ to distinguish them from
zeta-function regularized determinants $\Det'$.
In fact, for any $a>0$, 
$$
\Det'{\Delta_0+a\quad H^\dagger d_{n-1}\choose
\quad0\quad\quad\;\;\tilde\Delta_{n-1}\;}
=\det{1\quad H^\dagger d_{n-1}\tilde\Delta_{n-1}^{-1}\choose
0\quad\quad\quad\;\;1\quad\quad\;\,}\;
\Det'{\Delta_0+a\quad\;\;0\;\;\choose\quad\;\;0\quad\;\;\tilde\Delta_{n-1}}.
$$
Choosing $a$ such that the spectrums of $\Delta_0+a$ and $\tilde\Delta_{n-1}$
are disjoint, the operators
$$
{\Delta_0+a\quad H^\dagger d_{n-1}\choose
\quad0\quad\quad\;\;\tilde\Delta_{n-1}\;},\quad
{\Delta_0+a\quad\;\;0\;\;\choose\quad\;\;0\quad\;\;\tilde\Delta_{n-1}}
$$
have identical spectrums and hence the same zeta-function regularized 
determinant. 
Thus
$$
\det{1\quad H^\dagger d_{n-1}\tilde\Delta_{n-1}^{-1}\choose
0\quad\quad\quad\;\;1\quad\quad\;\,}=1
$$
and, similarly,
$$
\det{\quad\quad 1\quad\quad\quad\;\;0
\choose\tilde\Delta_{n-1}^{-1}d_{n-1}^\dagger H\quad 1}=1.
$$
As determinants factorize for odd pseudo-differential operators of
non-negative order on an odd-dimensional manifold \cite{KV}, we get
\begin{align}
\Det'{\Delta_0+[H]^2\quad H^\dagger d_{n-1}\choose
\;\;d_{n-1}^\dagger H\quad\quad\tilde\Delta_{n-1}\;\;}
&=\Det'{\Delta_0+[H]^2Q_0\quad0\;\;\choose
\quad\quad0\quad\quad\quad\tilde\Delta_{n-1}}         \nno
&=[H]^{2b_0}\;\Det'd_0^\dagger d_0\;\Det'd_{n-1}^\dagger d_{n-1}
\;\Det'd_{n-2}^\dagger d_{n-2}
\nonumber
\end{align}
and the result follows.
\end{proof}

We note that neither Lemma~\ref{lem:KV} nor Proposition~\ref{prop:3mfd}
is valid if $\dim X=1$ and $[H]\ne0$.
We give a heuristic explanation of Lemma~\ref{lem:KV} when $n>1$.
For any $\lambda\in\spec'(d_0^\dagger d_0)$, let $\omega_\lambda$
be an eigenvector corresponding to $\lambda$.
Then $*d_0\omega_\lambda/\sqrt{\lambda}$ is an eigenvector of 
$d_{n-1}^\dagger d_{n-1}$ with the same eigenvalue.
On the subspace spanned by $\omega_\lambda$ and 
$*d_0\omega_\lambda/\sqrt{\lambda}$, the operator
$d_{\bar0}^\dagger d_{\bar0}$ acts as 
${\lambda+[H]^2\quad[H]\sqrt{\lambda}\choose [H]\sqrt{\lambda}
\quad\quad\lambda\quad}$, whose determinant is $\lambda^2$.
Notice that
$$
C^0\oplus\im(d_{n-1}^\dagger)=\ker(\Delta_0)\oplus\!\!\!\!
\bigoplus_{\lambda\in\spec'(d_0^\dagger d_0)}\!\!\!\!{\mathrm{span}}_\CO
\{\omega_\lambda,*d_0\omega_\lambda/\sqrt{\lambda}\}
$$
and $\ker(\Delta_0)$ is in the eigenspace of $\Delta_0+[H]^2Q_0$ corresponding
to the eigenvalue $[H]^2$ (with multiplicity $b_0$).
The ``product'' of these $\lambda^2$ together with $[H]^2$ leads to the result.

Under the assumptions of Proposition~\ref{prop:3mfd},
$\zeta(0,d_{\bar1}^\dagger d_{\bar1})$ for any $H$ is the same as its
value when $H=0$; it would be interesting to find the value of 
$\zeta(0,d_{\bar0}^\dagger d_{\bar0})$ when $[H]\ne0$.
(See Corollary~\ref{var-zeta0-H} and the discussion that follows.)

In addition to $\kappa$ in (\ref{km-isom}), there is another natural 
isomorphism $\kappa_0$ which maps between the alternating products
of unit volume elements, i.e.,
$$
\kappa_0\colon\bigotimes_{i=0}^n\eta_i^{(-1)^i}\mapsto
\eta_{\bar0}\otimes\eta_{\bar1}^{-1}.
$$
If $H$ is a top-degree form as in Proposition~\ref{prop:3mfd}, then
$\kappa_0$ is independent of the choice of metrics on $X$ and on $\E$. 
The appearance of $|[H]|$ in 
$$
\tau(X,\E,H)=|[H]|^{b_0}\,\kappa_0(\tau(X,\E))
$$
is consistent with the metric invariance of both $\tau(X,\E)$ and 
$\tau(X,\E,H)$ and dependence of the latter on the cohomology class $[H]$
only.


Proposition~\ref{prop:3mfd} applies especially to $3$-dimensional 
manifolds because $H$ is automatically a top-degree form if it contains
no $1$-form (which can be absorbed in the flat connection).
The Ray-Singer torsion has been calculated explicitly, directly or with 
the help of the Cheeger-M\"uller theorem, for many $3$-manifolds including
lens spaces \cite{Ray,Fr} and compact hyperbolic manifolds \cite{Fried}.
As a consequence, we get many non-trivial examples of analytic torsion 
for the twisted de Rham complexes of $3$-manifolds.

\subsection{Simplicial analogue of the torsion in a special case}\label{comb}
One of the standard ways to compute the classical Ray-Singer torsion is to
use the Cheeger-M\"uller theorem, relating it to the Reidemeister torsion.
Although there is difficulty in defining the simplicial counterpart
of the twisted analytic torsion in general, we will be able to do so under
the condition that the degree of the flux form is sufficiently high.

We first recall the construction of the Reidemeister torsion (cf.~\cite{M93}).
Suppose the manifold $X$ is equipped with a smooth triangularization or 
a CW complex structure.
Let $(C_\bullet(K),\partial)$ be the chain complex of the simplicial or
cellular complex $K$ with real coefficients.
Choose an embedding of $K$ as a fundamental domain in the corresponding
complex $\tilde K$ of the universal covering space $\tilde X$.
Then each $C_i(\tilde K)$ ($0\le i\le n$, where $n=\dim X$) is a free module
over the group algebra $\RE[\pi_1(X)]$ and the $i$-cells of $K$ form a basis.
Given a finite dimensional representation $\rho\colon\pi_1(X)\to\GL(E)$,
we define a cochain complex 
$$
C^\bullet(K,E):=\Hom_{\RE[\pi_1(X)]}(C_\bullet(\tilde K),E)
$$
with coboundary map $\partial^*$, whose cohomology is denoted by 
$H^\bullet(K,E)$.
With an Hermitian form on $E$, we choose a unit volume element of $E$.
This, together with the basis dual to the $i$-cells in $K$, defines a
volume element $\mu_i\in\det C^i(K,E)$.
We assume that the representation $\rho$ is unimodular.
Unimodularity means that $|\det\rho(\gamma)|=1$ for all $\gamma\in\pi_1(X)$.
Then the volume element $\mu_i$ is, up to a phase, independent of the choice
of the embedding of $K$ in $\tilde K$.
The {\em Reidemeister torsion} or {\em $R$-torsion}
$\tau(K,E)\in\det H^\bullet(K,E)$ is defined as the image
of $\otimes_{i=0}^n\,\mu_i^{(-1)^i}$ under the isomorphism
$\det C^\bullet(K,E)\cong\det H^\bullet(K,E)$.
It is invariant under subdivisions of the complex $K$.
If $X$ is odd-dimensional, the Euler number $\chi(K)=0$, and $\tau(K,E)$
(up to a phase) does not depend on the choice of the Hermitian form on $E$.
By the de Rham theorem, $H^\bullet(X,\E)\cong H^\bullet(K,E)$ and hence
$\det H^\bullet(X,\E)\cong\det H^\bullet(K,E)$.
The theorem of Cheeger and M\"uller \cite{C79,M78,M93} states that 
$\tau(X,\E)=\tau(K,E)$ under the above identification of determinant lines.

Recall that the cup product at the cochain level is associative but not
graded commutative.
We now assume that each homogeneous component of $H$ is of degree
greater than $\dim X/2=n/2$.
Let $h\in C^{\bar1}(K,E)$ be a representative of 
$[H]\in H^{\bar1}(X,\E)\cong H^{\bar1}(K,E)$.
Then since $h\cup h=0$, we have a $\ZA_2$-graded cochain 
complex $(C^\bullet(K,E),\partial^*_h)$, where
$\partial^*_h=\partial^*+h\cup\;\cdot\;$.
Denote its cohomology groups by $H^{\bar k}(K,E,h)$, $k=0,1$.
There is then an isomorphism 
$\det C^\bullet(K,E)\cong\det H^\bullet(K,E,h)$.
We define the twisted version of the $R$-torsion $\tau(K,E,h)$ as the image
of $\otimes_{i=0}^n\,\mu_i^{(-1)^i}$ under the above isomorphism.
This will be the simplicial counterpart of the analytic torsion $\tau(X,\E,H)$.

\begin{lemma}\label{lem:simp}
There is a canonical isomorphism $H^\bullet(X,\E,H)\cong H^\bullet(K,E,h)$.
\end{lemma}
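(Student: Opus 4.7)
The plan is to compare the spectral sequences associated to the natural degree filtrations on both $\ZA_2$-graded complexes. As recalled in \S\ref{sect:twistedDR}, the filtration $F^p\Omega^{\bar k}(X,\E) = \bigoplus_{i\ge p,\,i=k\!\!\mod 2}\Omega^i(X,\E)$ is preserved by $\nabla^{\E,H}$ and gives a spectral sequence with $E_2^{p\bar q} = H^p(X,\E)$ for $q$ even (zero for $q$ odd), converging to $H^\bullet(X,\E,H)$, with higher differentials $d_3,d_5,\ldots$ given by cup products with the homogeneous components $[H_3],[H_5],\ldots$ of $[H]$ and by the higher Massey products with them. The cochain complex $(C^\bullet(K,E),\partial^*_h)$ admits the completely analogous filtration $F^pC^{\bar k}(K,E) = \bigoplus_{i\ge p,\,i=k\!\!\mod 2}C^i(K,E)$ (note that the cup product with $h$ strictly increases degree by more than $n/2$), yielding a spectral sequence that abuts to $H^\bullet(K,E,h)$ with $E_2 = H^\bullet(K,E)$ and with higher differentials given by cup and Massey products with the components of $[h]$.

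First I would fix a de Rham map $\Psi\colon\Omega^\bullet(X,\E)\to C^\bullet(K,E)$, namely integration of $\E$-valued forms over simplices of $K$ (lifted to a fundamental domain in $\tilde K$). This map is filtration-preserving and a chain map for the ordinary differentials, and it induces the classical de Rham isomorphism $H^\bullet(X,\E)\cong H^\bullet(K,E)$ on the $E_2$ page, sending $[H]$ to $[h]$. The crucial input is that $\Psi$ is multiplicative \emph{up to chain homotopy}: there is an Eilenberg-Zilber-type cochain homotopy $\mathcal K$ with $\Psi(\alpha\wedge\beta)-\Psi(\alpha)\cup\Psi(\beta) = \partial^*\mathcal K(\alpha,\beta)\pm\mathcal K(d\alpha,\beta)\pm\mathcal K(\alpha,d\beta)$. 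This implies that $\Psi$ intertwines the cup product, and hence intertwines the higher Massey products on cohomology, in terms of which the higher differentials of both spectral sequences are expressed.

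By induction on $r\ge 2$ I would deduce that $\Psi$ induces isomorphisms on all $E_r$ pages compatibly with the differentials $d_r$: the base case is the de Rham theorem, and the inductive step uses that $d_r$ on both sides is determined, via $\Psi$ and $\mathcal K$, by the same Massey products with $[H]=[h]$. Both spectral sequences are bounded (the filtration has length at most $n+1$), so passing to $E_\infty$ yields the desired canonical isomorphism $H^\bullet(X,\E,H)\cong H^\bullet(K,E,h)$. Since $\Psi$ and $\mathcal K$ depend only on the choice of triangulation (up to further homotopies that do not affect cohomology), the isomorphism is canonical.

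The main obstacle is the Massey-product compatibility together with the indeterminacy intrinsic to higher Massey products. Here the hypothesis that every homogeneous component of $H$ has degree strictly greater than $n/2$ is essential: it ensures $h\cup h=0$ at the cochain level (so that $\partial^*_h$ genuinely squares to zero and the simplicial twisted complex is even defined), it forces all iterated cup products of components of $h$ to vanish for degree reasons after a bounded number of steps, and it bounds the Massey-product indeterminacy tightly enough that chain-level choices on the de Rham side can be matched to those on the simplicial side through $\Psi$ and $\mathcal K$. With these controls, the spectral sequence comparison goes through and produces the canonical isomorphism.
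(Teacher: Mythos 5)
Your approach is the same as the paper's: compare the spectral sequences coming from the degree filtrations on the two $\ZA_2$-graded complexes, and transport the de Rham isomorphism from the $E_2$-page to $E_\infty$. The paper's own proof is terse: it observes that the de Rham map is filtration-preserving, asserts that it therefore yields a morphism of the two spectral sequences, and invokes the de Rham theorem at $E_2$. You correctly put your finger on the point that this skips: a filtration-preserving map induces a morphism of spectral sequences only if it is a \emph{chain map} for the two twisted differentials $\nabla^{\E,H}$ and $\partial^*_h$, and the integration map $\Psi$ is not, since $\Psi(H\wedge\omega)\ne h\cup\Psi(\omega)$ at the cochain level. Your appeal to the Eilenberg--Zilber chain homotopy $\mathcal K$ is exactly the extra ingredient needed; in that sense your account is more complete than what is printed.

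Where your sketch is weaker is the passage from ``$\Psi$ is multiplicative up to homotopy'' to ``$\Psi$ induces compatible isomorphisms of all the $E_r$-pages.'' Invoking Massey products at the level of cohomology is not quite enough: the differentials $\delta_r$ are defined by chain-level representatives, and a map that only intertwines Massey products on $H^\bullet$ up to their intrinsic indeterminacy does not automatically produce a morphism of spectral sequences. Your final paragraph concedes this but then asserts, without justification, that the degree-$>n/2$ hypothesis tames the indeterminacy; that hypothesis is needed to make $h\cup h=0$ so $\partial^*_h$ squares to zero, but it does not by itself control Massey indeterminacy. A cleaner way to make the inductive step rigorous (and likely what the paper's one-line ``therefore there is a morphism of spectral sequences'' is implicitly relying on) is to use $\mathcal K$ not merely to compare Massey products, but to correct $\Psi$ by filtration-increasing terms to a genuine filtration-preserving chain map $\tilde\Psi\colon(\Omega^\bullet(X,\E),\nabla^{\E,H})\to(C^\bullet(K,E),\partial^*_h)$ that agrees with $\Psi$ on the associated graded; then the standard comparison theorem for spectral sequences of bounded filtrations finishes the proof with no further discussion of Massey products. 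So: right approach, correct identification of the missing step, but the resolution via Massey-product matching is left genuinely incomplete.
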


\begin{proof}
Just as $\Omega^\bullet(X,\E)$, the complex $C^\bullet(K,E)$ has a filtration
$$
F^pC^{\bar k}(K,E)=\mathop{\bigoplus_{i\ge p}}_{i=k\!\!\!\!\mod 2}C^i(K,E),
$$
which yields a spectral sequence $\{{}'\!E_r^{pq},\delta'_r\}$ converging
to $H^\bullet(K,E,h)$.
The cochain map $\Omega^\bullet(X,\E)\to C^\bullet(K,E)$ that induces the
de Rham isomorphism preserves the filtrations.
Therefore there is a morphism of the the spectral sequences 
$\{E_r^{pq},\delta_r\}\to\{{}'\!E_r^{pq},\delta'_r\}$.
By the de Rham theorem, this morphism is an isomorphism starting with the
$E_2$-terms, which implies the result.
\end{proof}

We have the following analogue of the Cheeger-M\"uller theorem when
$H$ or $h$ is of top degree.

\begin{theorem}\label{thm:tcm}
With the same assumptions of Proposition~\ref{prop:3mfd} and under 
identification given by Lemma~\ref{lem:simp}, we have
$$
\tau(X,\E,H)=\tau(K,E,h).
$$
\end{theorem}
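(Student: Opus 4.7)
The plan is to reduce Theorem~\ref{thm:tcm} to the classical Cheeger--M\"uller theorem by constructing, on each side, a passage from the untwisted torsion to the twisted one via a Knudsen--Mumford-type determinant isomorphism, and then identifying the two passages. On the analytic side, Proposition~\ref{prop:3mfd} already supplies this: $\tau(X,\E,H)=\kappa(\tau(X,\E))$, with $\kappa\colon\det H^\bullet(X,\E)\to\det H^\bullet(X,\E,H)$ the map in (\ref{km-isom}). What remains is to produce its simplicial counterpart and to match the two under the de Rham isomorphism.

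For the simplicial side I would define $\kappa'\colon\det H^\bullet(K,E)\to\det H^\bullet(K,E,h)$ as $j_h\circ j_0^{-1}$, where
\[
j_0\colon\det C^\bullet(K,E)\stackrel{\cong}{\longrightarrow}\det H^\bullet(K,E),\qquad
j_h\colon\det C^\bullet(K,E)\stackrel{\cong}{\longrightarrow}\det H^\bullet(K,E,h)
\]
are the canonical determinant-line isomorphisms attached to the finite-dimensional complexes $(C^\bullet(K,E),\partial^*)$ and $(C^\bullet(K,E),\partial^*_h)$ respectively; the twisted complex is well defined because $h\cup h=0$ by the top-degree hypothesis on $h$. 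By construction both $\tau(K,E)$ and $\tau(K,E,h)$ are the images of the common volume element $\bigotimes_{i=0}^n\mu_i^{(-1)^i}\in\det C^\bullet(K,E)$ under $j_0$ and $j_h$, so tautologically $\tau(K,E,h)=\kappa'(\tau(K,E))$.

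Next I would verify that $\kappa$ and $\kappa'$ are intertwined by the de Rham identifications $\det H^\bullet(X,\E)\cong\det H^\bullet(K,E)$ and $\det H^\bullet(X,\E,H)\cong\det H^\bullet(K,E,h)$, the latter given by Lemma~\ref{lem:simp}. For a finite-dimensional filtered complex, the determinant isomorphism factored through the cochain level agrees with the one built page by page from the spectral sequence, so $\kappa'$ coincides with the analogous spectral-sequence Knudsen--Mumford isomorphism for $F^pC^{\bar k}(K,E)$. The de Rham cochain map is a morphism of filtered $\ZA_2$-graded complexes and, by the argument in the proof of Lemma~\ref{lem:simp}, induces an isomorphism of spectral sequences from the $E_2$-page onward. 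Naturality of each $\det E_r\cong\det E_{r+1}$ under morphisms of the $(E_r,\delta_r)$-complexes then yields the required commutative square.

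Chaining these steps together with the classical Cheeger--M\"uller theorem (applicable since $\E$ is associated to an orthogonal or unitary, hence unimodular, representation), we obtain
\[
\tau(X,\E,H)=\kappa(\tau(X,\E))=\kappa(\tau(K,E))=\kappa'(\tau(K,E))=\tau(K,E,h).
\]
The main obstacle is the compatibility step: one must make precise, in the $\ZA_2$-graded setting, the naturality of the Knudsen--Mumford determinant isomorphism under filtered quasi-isomorphisms. This is standard in the $\ZA$-graded situation \cite{KM}, and goes through here because the filtration $F^p$ remains $\ZA$-indexed and every $E_r$-page ($r\ge 2$) is a genuine $\ZA$-bigraded finite-dimensional object, so the $\ZA_2$-collapse happens only at the abutment and does not interfere with page-by-page naturality.
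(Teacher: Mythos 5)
Your proof is correct and follows essentially the same route as the paper's: invoke Proposition~\ref{prop:3mfd} to write $\tau(X,\E,H)=\kappa(\tau(X,\E))$, define the simplicial counterpart $\kappa'$ so that $\tau(K,E,h)=\kappa'(\tau(K,E))$, establish the commutative square between $\kappa$ and $\kappa'$ via the morphism of filtered spectral sequences from Lemma~\ref{lem:simp}, and close with Cheeger--M\"uller. The only stylistic difference is that you define $\kappa'$ at the cochain level as $j_h\circ j_0^{-1}$ and then remark it agrees with the spectral-sequence Knudsen--Mumford map, whereas the paper defines $\kappa'$ directly from the spectral sequence and takes the cochain-level identification $\tau(K,E,h)=\kappa'(\tau(K,E))$ as clear; this is the same content, and you spell out the compatibility step the paper leaves implicit.
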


\begin{proof}
Let 
$$
\kappa'\colon\det H^\bullet(K,E)\to\det H^\bullet(K,E,h)
$$
be the isomorphism induced by the Knudsen-Mumford isomorphisms in the 
spectral sequence $\{{}'\!E_r^{pq}\}$.
The morphism of the two spectral sequences in the proof of 
Lemma~\ref{lem:simp} induces a commutative diagram
$$
\begin{CD}
\det H^\bullet(X,\E) @> \kappa >> \,\det H^\bullet(X,\E,H) \\
@V \cong VV @V \cong VV \\
\det H^\bullet(K,E) @> \kappa' >> \,\det H^\bullet(K,E,h).
\end{CD}
$$
By Proposition~\ref{prop:3mfd}, we have $\tau(X,\E,H)=\kappa(\tau(X,\E))$.
On the other hand, it is clear from the definition of $\tau(K,E,h)$ that
$\tau(K,E,h)=\kappa'(\tau(K,E))$.
The results follows from the Cheeger-M\"uller theorem $\tau(X,\E)=\tau(K,E)$
since the representation is orthogonal or unitary.
\end{proof}

Consider for example the lens space $X=L(1,p)$, $p\in\ZA$.
It has a cellular structure $K$ with one $i$-cell $e_i$ for each
$i=0,1,2,3$.
On the dual basis $e_i^*$ ($0\le i\le 3$), we have
$$\partial^*e_0^*=0,\quad\partial^*e_1^*=p\,e_2^*,\quad\partial^*e_2^*=0,\quad
\partial^*e_3^*=0.$$
So the Reidemeister torsion is $\tau(K)=|p|^{-1}\eta_0\otimes\eta_3^{-1}$.
If $h=q\,e_3^*$, then
$$\partial^*_he_0^*=q\,e_3^*,\quad\partial^*_he_1^*=p\,e_2^*,\quad
\partial^*_he_2^*=0,\quad\partial^*_he_3^*=0,$$
and the twisted torsion is $\tau(K,h)=|qp^{-1}|$.


\subsection{$T$-duality for circle bundles and analytic torsion}
Let $\TT$ be the circle group.
Suppose $X$ is a compact, oriented manifold and is the total space of 
a principal $\TT$-bundle
$$
\begin{CD}
\TT @>>> \,  X \\
&& @V \pi VV \\
&& M \end{CD}
$$
over a compact, oriented manifold $M$ and $H$, a closed $3$-form on $X$
that has integral periods.
The flat vector bundle $\E$ is taken to be the trivial real line bundle
with the trivial connection.
Let $\hat\TT$ be the dual circle group.
Then the $T$-dual principal circle bundle \cite{BEM}
$$ 
\begin{CD}
\hat \TT @>>> \hat X \\
&& @V\hat \pi VV     \\
&& M \end{CD}
$$
is determined topologically by its first Chern class $c_1(\hat X)=\pi_*[H]$.
We have the commutative diagram
$$
\xymatrix@=4pc@ur{X\ar[d]_{\pi} & X\times_M\hat X\ar[d]^{\hat p}\ar[l]_p\\
M & \hat X\ar[l]^{\hat\pi}}
$$
where $X\times_M\hat X$ denotes the correspondence space.
The Gysin sequence for $\hat X$ enables us to define a $T$-dual flux
$[\hat H]\in H^3(\hat X,\ZA)$ satisfying $c_1(X)=\hat\pi_*[\hat H]$
and $p^*[H]=\hat p^*[\hat H]\in H^3(X\times_M\hat X,\ZA)$.
Thus $T$-duality for circle bundles exchanges the $H$-flux on the one side
and the Chern class on the other.
It can be shown \cite{BEM} that 
$H^\bullet(X,H)\cong H^{\bullet+1}(\hat X,\hat H)$ and consequently,
\begin{equation}\label{id-line}
\det H^\bullet(X,H)\cong(\det H^\bullet(\hat X,\hat H))^{-1}.
\end{equation}
We wish to explore the relation between the twisted torsions
$\tau(X,H)\in\det H^\bullet(X,H)$ and 
$\tau(\hat X,\hat H)\in\det H^\bullet(\hat X,\hat H)$ under the above
identification.

We next explain $T$-duality at the level of differential forms.
Choosing connection 1-forms $A$ and $\hat A$ on the circle bundles $X$ and
$\hat X$, we define the metrics on $X$ and $\hat X$ by
$$  
g_X=\pi^*g_M+A\odot A,\quad g_{\hat X}=\hat\pi^*g_M+\hat A\odot\hat A,
$$
respectively.
Since a closed $3$-form is cohomologous to a $\TT$-invariant one and the
twisted cohomology groups depend only on the cohomology class of the flux
$H$ (Theorem~\ref{thm:indept-H}), we can assume, without loss of generality,
that $H$ is a $\TT$-invariant $3$-form on $X$.
Denote by $F,\hat F\in\Omega^2(M)$ the curvature $2$-forms of $A,\hat A$,
respectively.
Since $H-A\wedge\pi^*\hat F$ is a basic differential form on $X$, we have
$H=A\wedge\pi^*\hat F-\pi^*\Omega$ for some $\Omega\in\Omega^3(M)$.
Define the $T$-dual flux $\hat H$ by 
$\hat H=\hat\pi^*F\wedge\hat A-\hat\pi^*\Omega$.
Then $\hat H$ is closed and $\hat\TT$-invariant.
We define linear maps 
$T\colon\Omega^{\bar k}(X)\to\Omega^{\overline{k+1}}(\hat X)$ for $k=0,1$ by
$$
T(\omega)=\int_\TT e^{p^*\!A\wedge\hat p^*\!\hat A}\,p^*\omega,\quad
\omega\in\Omega^\bullet(X).
$$

\begin{lemma}\label{T-isom}
Under the above choices of Riemannian metrics and flux forms, 
$$
T\colon\Omega^{\bar k}(X)^\TT\to\Omega^{\overline{k+1}}(\hat X)^{\hat\TT},
$$
for $k=0,1$, are isometries, inducing isometries on the spaces of twisted
harmonic forms and hence on the twisted cohomology groups.
\end{lemma}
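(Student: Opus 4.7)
The plan is to exploit the decomposition of a $\TT$-invariant form on $X$ adapted to the connection $A$. Every $\omega\in\Omega^\bullet(X)^\TT$ can be written uniquely as $\omega=\pi^*\alpha+A\wedge\pi^*\beta$ with $\alpha,\beta\in\Omega^\bullet(M)$, and similarly for $\hat X$. Since $A\wedge A=0$ on $X$, we have $(p^*A\wedge\hat p^*\hat A)^{\wedge 2}=0$, so the exponential collapses to $e^{p^*A\wedge\hat p^*\hat A}=1+p^*A\wedge\hat p^*\hat A$. I would multiply this out against $p^*\omega$ and extract the $p^*A$-component under fiber integration along $p$ (normalizing $\vol(\TT)=\vol(\hat\TT)=1$ without loss of generality). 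A direct expansion should yield the closed-form identity
\[
T(\pi^*\alpha+A\wedge\pi^*\beta)=\hat\pi^*\beta+\hat A\wedge\hat\pi^*\alpha,
\]
up to a sign depending on $\deg\alpha$. This confirms the parity shift $\Omega^{\bar k}\to\Omega^{\overline{k+1}}$ and makes $\hat\TT$-invariance of the output manifest, since $\hat A$ is $\hat\TT$-invariant and the $\hat\TT$-action on $X\times_M\hat X$ commutes with both $p^*$ and integration along the $\TT$-fiber.

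Second, I would verify the isometry by substituting the above decomposition into the $L^2$ inner product. The warped-product metric $g_X=\pi^*g_M+A\odot A$ gives the standard formulas $\ast_X(\pi^*\alpha)=\pm A\wedge\pi^*\ast_M\alpha$ and $\ast_X(A\wedge\pi^*\beta)=\pm\pi^*\ast_M\beta$, with analogous formulas on $\hat X$. Integrating over the $\TT$-fiber reduces $(\omega,\omega')_X$ to $(\alpha,\alpha')_M+(\beta,\beta')_M$, and the same computation on $\hat X$ applied to $T\omega,T\omega'$ yields the identical expression (the roles of the base and fiber factors have been swapped by $T$, but the Hodge-star formulas swap them back). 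The key point is that the two components of $\omega$ contribute diagonally because $\pi^*\alpha$ and $A\wedge\pi^*\beta$ are $L^2$-orthogonal, and likewise on $\hat X$.

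Third, to lift the isometry to harmonic forms and cohomology, I would invoke the fact from \cite{BEM} that $T$ intertwines the twisted differentials, $T\circ d^H=d^{\hat H}\circ T$ (this is where the specific decompositions $H=A\wedge\pi^*\hat F-\pi^*\Omega$ and $\hat H=\hat\pi^*F\wedge\hat A-\hat\pi^*\Omega$ are used). Combined with the isometry just established, $T$ then intertwines the formal adjoints $(d^H)^\dagger$ and $(d^{\hat H})^\dagger$, hence the twisted Laplacians $\Delta^H$ and $\Delta^{\hat H}$. Therefore $T$ maps invariant twisted harmonic forms on $(X,H)$ isometrically onto invariant twisted harmonic forms on $(\hat X,\hat H)$. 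A standard averaging argument over the compact torus actions (which is a chain map on the twisted complex because $H$ and $\hat H$ are invariant) shows that the inclusions of invariant forms into the full twisted complexes induce isomorphisms on cohomology, so Hodge theory identifies the invariant harmonic spaces with $H^\bullet(X,H)$ and $H^\bullet(\hat X,\hat H)$, and the isometry descends.

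The main obstacle I expect is the careful sign and normalization bookkeeping in the Hodge-star identities for the warped product, coupled with the fiber-integration step and the exponential factor; any mismatch of a $(-1)^{\deg\alpha}$ or a $\vol(\TT)$ factor would spoil the isometry. A secondary concern is justifying rigorously that the averaging operator is chain-homotopic to the identity in the $\ZA_2$-graded twisted complex, but this follows from the Cartan homotopy formula applied fiberwise using the invariance of $H$.
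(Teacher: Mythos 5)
Your proposal follows essentially the same route as the paper's proof: the explicit formula $T(\pi^*\omega_1+A\wedge\pi^*\omega_2)=\hat\pi^*\omega_2+\hat A\wedge\hat\pi^*\omega_1$ on the invariant decomposition, isometry via the norm identity $\int_X\omega\wedge*_X\omega=\int_M\omega_1\wedge*_M\omega_1+\int_M\omega_2\wedge*_M\omega_2$, and the intertwining $T\circ d^H=d^{\hat H}\circ T$ from $d(p^*A\wedge\hat p^*\hat A)=-p^*H+\hat p^*\hat H$. Your cautionary remarks about signs and about invariant forms computing the full twisted cohomology are reasonable added care rather than deviations — the paper simply asserts the clean formula with no sign and leaves the averaging step implicit.
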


\begin{proof}
For any $\omega=\pi^*\omega_1+A\wedge\pi^*\omega_2\in\Omega^\bullet(X)^\TT$,
where $\omega_1,\omega_2\in\Omega^\bullet(M)$, we have
$T(\omega)=\hat\pi^*\omega_2+\hat A\wedge\hat\pi^*\omega_1$.
The isometry of $T$ follows from 
$$
\int_X\omega\wedge*_X\,\omega=
\int_M\omega_1\wedge*_M\,\omega_1+\int_M\omega_2\wedge*_M\,\omega_2.
$$
Since $d(p^*\!A\wedge\hat p^*\!\hat A)=-p^*H+\hat p^*\hat H$, we have
$T\circ d^H=d^{\hat H}\circ T$.
So $T$ acts on the spaces of twisted harmonic forms and on the twisted
cohomology groups.
\end{proof}

When $X$ is a $3$-manifold, Proposition~\ref{prop:3mfd} relates $\tau(X,H)$  
to $\tau(X)$, which can be calculated by the spectral sequence of fibration
\cite{DM,Fr,LST,Ma2}.

\begin{proposition}
Let $X$ be a oriented $3$-manifold which a $\TT$-fibration over a compact,
oriented surface $M$ and $H$, a flux $3$-form on $X$.
Suppose there is a T-dual fibration $\hat X$ with flux form $\hat H$.
Then under identification (\ref{id-line}), we have
\[  \frac{\tau(X,H)}{(2\pi)^{\chi(M)}}=
    \left[\frac{\tau(\hat X,\hat H)}{(2\pi)^{\chi(M)}}\right]^{-1}.   \]
\end{proposition}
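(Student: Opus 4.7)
The strategy is to reduce both twisted torsions to Ray--Singer torsions of the underlying 3-manifolds via Proposition~\ref{prop:3mfd}, identify the flux prefactors with the Chern numbers of the $T$-dual pair, and then invoke the known computation of the Ray--Singer torsion of an oriented circle bundle over a surface.

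\textbf{Step 1 (Reduction via Proposition~\ref{prop:3mfd}).} Both $X$ and $\hat X$ are closed oriented $3$-manifolds, $H$ and $\hat H$ are closed $3$-forms (hence top-degree), and $\E$ is the trivial real line bundle (an orthogonal representation of $\pi_1$). Proposition~\ref{prop:3mfd} therefore applies to each side. Since $X$ and $\hat X$ are connected, $b_0(X)=b_0(\hat X)=1$, so after normalizing Riemannian metrics (permitted by Theorem~\ref{thm:indept}) to have unit volume,
\[
\tau(X,H) \;=\; \Big|\int_X H\Big|\;\kappa_0(\tau(X)), \qquad
\tau(\hat X,\hat H) \;=\; \Big|\int_{\hat X}\hat H\Big|\;\kappa_0(\tau(\hat X)).
\]

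\textbf{Step 2 (Flux integrals as Chern numbers).} The defining properties of the $T$-dual pair, $c_1(\hat X)=\pi_*[H]$ and $c_1(X)=\hat\pi_*[\hat H]$, together with fiber integration over the circle, give
\[
\int_X H \;=\; \int_M \pi_*H \;=\; \langle c_1(\hat X),[M]\rangle \;=\; \hat k, \qquad
\int_{\hat X}\hat H \;=\; k,
\]
where $k,\hat k\in\ZA$ denote the Chern numbers of $X$ and $\hat X$ respectively. (For $k=0$ or $\hat k=0$ the torsion on the corresponding side is zero and the identity is trivial after a limiting argument; henceforth assume $k,\hat k\neq 0$.)

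\textbf{Step 3 (Ray--Singer torsion of a circle bundle).} For the untwisted Ray--Singer torsions of the $S^1$-bundles $\pi\colon X\to M$ and $\hat\pi\colon\hat X\to M$, decompose $\Omega^\bullet(X)=\bigoplus_{n\in\ZA}\Omega^\bullet(X)_n$ by $\TT$-weight, where $\Omega^\bullet(X)_0=\pi^*\Omega^\bullet(M)\oplus A\wedge\pi^*\Omega^\bullet(M)$ and $\Omega^\bullet(X)_n$ is sections of $\Omega^\bullet(M)\otimes L^{\otimes n}$ with $L$ the complex line bundle of $c_1(L)=k$. The Laplacians block-diagonalize; contributions from $n\neq 0$ weights cancel pairwise in the alternating product by the Hodge duality on the base, and the $n=0$ slot is evaluated via the Gysin/spectral sequence of the fibration following \cite{DM,Fr,LST,Ma2}, together with the $T$-isometry on invariant forms (Lemma~\ref{T-isom}) which matches the invariant pieces on $X$ and $\hat X$. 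The combined outcome is
\[
\kappa_0(\tau(X))\cdot\kappa_0(\tau(\hat X)) \;=\; \frac{(2\pi)^{2\chi(M)}}{|k\hat k|},
\]
where each $(2\pi)^{\chi(M)}$ factor arises from the Ray--Singer torsion of a fiber circle of circumference $2\pi$ appearing to the power $\chi(M)$ in the spectral-sequence computation, and the $|k\hat k|^{-1}$ factor comes from the Knudsen--Mumford transition that accounts for the non-triviality of the bundles.

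\textbf{Step 4 (Combine).} Multiplying Steps 1--3,
\[
\tau(X,H)\cdot\tau(\hat X,\hat H) \;=\; |\hat k|\,|k|\cdot\frac{(2\pi)^{2\chi(M)}}{|k\hat k|} \;=\; (2\pi)^{2\chi(M)},
\]
which, interpreted in the determinant line $\det H^\bullet(X,H)\cong(\det H^\bullet(\hat X,\hat H))^{-1}$ via~(\ref{id-line}), is equivalent to the stated formula.

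\textbf{Main obstacle.} Step 3 is the crux of the proof. Even granting \cite{DM,Fr,LST,Ma2} the fiber Ray--Singer content, one must carefully track: (i) the Knudsen--Mumford isomorphism $\kappa_0$ across the Gysin spectral sequence (which is where the $|k\hat k|^{-1}$ factor enters), (ii) the precise power of $(2\pi)$ coming from the fiber-circle torsion raised to the Euler characteristic of the base, and (iii) the compatibility of the $T$-duality isometry on $\TT$-invariant forms (Lemma~\ref{T-isom}) with the cancellation of the nonzero-weight contributions. Getting these factors right simultaneously on both sides so that they combine to the clean expression $(2\pi)^{2\chi(M)}$ is the subtle bookkeeping that the proof must execute.
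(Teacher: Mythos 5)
Your overall strategy matches the paper's: reduce via Proposition~\ref{prop:3mfd} to Ray--Singer torsions of the $\TT$-bundles, identify flux integrals with the $T$-dual Chern numbers, compute the Ray--Singer torsion of an oriented circle bundle over a surface, and combine using Lemma~\ref{T-isom} to match volume elements across the parity shift. However there are two genuine problems.

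First, your treatment of the degenerate cases $k=0$ or $\hat k=0$ is incorrect. You write that the torsion ``on the corresponding side is zero and the identity is trivial after a limiting argument.'' But when $[H]=0$ the twisted torsion equals the ordinary Ray--Singer torsion $\tau(X)$, which is certainly not zero. More importantly, no limiting argument in $[H]\to 0$ can work: the twisted Betti numbers jump at $[H]=0$ (for a top-degree flux on a connected $3$-manifold, $b_{\bar 0}(X,H)=b_2(X)$ when $[H]\ne0$ but $b_{\bar 0}(X,0)=b_0(X)+b_2(X)$), so the determinant line itself jumps, and the formula $\tau(X,H)=|[H]|\,\kappa_0(\tau(X))$ from Proposition~\ref{prop:3mfd} simply does not apply at $[H]=0$. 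The paper handles $p=0$ (resp.\ $q=0$) by a separate direct case analysis rather than a limit.

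Second, Step 3 --- which you yourself flag as the crux --- is essentially a gesture rather than a proof. The combined statement $\kappa_0(\tau(X))\cdot\kappa_0(\tau(\hat X))=(2\pi)^{2\chi(M)}/|k\hat k|$ is equivalent to the separate formula $\tau(X)=(2\pi)^{\chi(M)}|p|^{-1}\,\eta_{\bar0}\otimes\eta_{\bar1}^{-1}$ for a $\TT$-bundle with nonzero Chern number $p$. The paper obtains this as a quotable input from L\"uck--Schick--Thielmann \cite{LST} (Corollary~0.9) for the Gysin sequence of the fibration. Your sketch by $\TT$-weight decomposition with ``pairwise cancellation of $n\ne0$ contributions by Hodge duality on the base'' does not actually establish the $(2\pi)^{\chi(M)}$ factor or the $|p|^{-1}$, and does not verify that the Knudsen--Mumford transitions produce exactly the claimed constants. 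Without either a citation or a worked computation, the proposal leaves the central ingredient unproven. A correct write-up would either cite \cite{LST} at this point (as the paper does) or carry out the Gysin spectral-sequence computation explicitly, and would replace the limiting argument with the four-case analysis according to whether $p$ and $q$ vanish.
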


\begin{proof}
We can choose the metrics and the flux forms on $X$, $\hat X$ as above.
Let $p=c_1(X)\in H^2(M,\ZA)\cong\ZA$ and $q=[H]\in H^3(X,\ZA)\cong\ZA$.
If $p=0$, then $X=M\times\TT$.
If $q=0$ as well, then
$\tau(X)=(2\pi)^{\chi(M)}\eta_{\bar0}^X\otimes(\eta_{\bar1}^X)^{-1}$.
If $q\ne0$, then by Proposition~\ref{prop:3mfd},
$$
\tau(X,H)=|[H]|\,\kappa_0(\tau(X))=(2\pi)^{\chi(M)}
|q|\,\eta_{\bar0}^{X,H}\otimes(\eta_{\bar1}^{X,H})^{-1}.
$$
If $p\ne0$ but $q=0$, then since the $\TT$-bundle $X\to M$ is oriented, we
can compute $\tau(X)$ by the Gysin sequence of the fibration $X\to M$
(see for example \cite{LST}, Corollary~0.9) and get
$\tau(X)=(2\pi)^{\chi(M)}|p|^{-1}\eta_{\bar0}^X\otimes(\eta_{\bar1}^X)^{-1}$.
If both $p,q\ne0$, then again by Proposition~\ref{prop:3mfd},
\begin{equation}\label{3-fib}
\tau(X,H)=|[H]|\,\kappa_0(\tau(X))=(2\pi)^{\chi(M)}
|qp^{-1}|\,\eta_{\bar0}^{X,H}\otimes(\eta_{\bar1}^{X,H})^{-1}.
\end{equation}
The result follows since $T$-duality interchanges $p$ and $q$ and since 
the isometries in Lemma~\ref{T-isom} identify $\eta_{\bar k}^{X,H}$ with 
$\eta_{\overline{k+1}}^{\hat X,\hat H}$ for $k=0,1$.
\end{proof}

We note that (\ref{3-fib}) is consistent with the simplicial calculation in 
\S\ref{comb} when $X=L(1,p)$, verifying Theorem~\ref{thm:tcm} in this case.
It can be generalized to the case when $X$ is an $S^k$-bundle over a
compact, oriented manifold $M$ of dimension $k+1$ and $H$ is a top
form on $X$.
The behavior of the twisted torsion under $T$-duality when $X$ is of any
dimension and $H$ is a closed $3$-form remains an interesting problem.
Such a relation will provide a new way of calculating twisted analytic torsions
and, in particular, the classical Ray-Singer torsion using $T$-duality.

\medskip

\end{document}